\begin{document}

\title[Decomposition of the restricted partition generating function]{On the partial fraction decomposition of the restricted partition generating function}
\author{Cormac O'Sullivan}
\address{Department of Mathematics\\
 The CUNY Graduate Center \\ New York, NY 10016-4309\\ U.S.A.}
\email{cosullivan@gc.cuny.edu}
\date{Sept 1, 2012}
\subjclass[2000]{11P82, 11B68}
\keywords{Restricted partition, partial fraction decomposition, Bernoulli polynomial,  Sylvester wave}

\thanks{Support for this project was provided by a PSC-CUNY Award, jointly funded by The Professional Staff Congress and The City University of New York.}

\begin{abstract}
We  provide new formulas for the coefficients in the partial fraction decomposition of the restricted partition generating function. These techniques allow us to partially resolve a recent conjecture of Sills and Zeilberger. We also describe upcoming work, giving a resolution to Rademacher's conjecture on the asymptotics of these coefficients.
\end{abstract}

\maketitle

\def\s#1#2{\langle \,#1 , #2 \,\rangle}

\def\H{{\mathbf{H}}}
\def\F{{\frak F}}
\def\C{{\mathbb C}}
\def\R{{\mathbb R}}
\def\Z{{\mathbb Z}}
\def\Q{{\mathbb Q}}
\def\N{{\mathbb N}}
\def\G{{\Gamma}}
\def\GH{{\G \backslash \H}}
\def\g{{\gamma}}
\def\L{{\Lambda}}
\def\ee{{\varepsilon}}
\def\K{{\mathcal K}}
\def\Re{\mathrm{Re}}
\def\Im{\mathrm{Im}}
\def\PSL{\mathrm{PSL}}
\def\SL{\mathrm{SL}}
\def\Vol{\operatorname{Vol}}
\def\lqs{\leqslant}
\def\gqs{\geqslant}
\def\sgn{\operatorname{sgn}}
\def\res{\operatornamewithlimits{Res}}
\def\li{\operatorname{Li_2}}
\def\lis{\operatorname{Li_2^*}}
\def\clp{\operatorname{Cl}'_2}
\def\clpp{\operatorname{Cl}''_2}

\newcommand{\stira}[2]{{\genfrac{[}{]}{0pt}{}{#1}{#2}}}
\newcommand{\stirb}[2]{{\genfrac{\{}{\}}{0pt}{}{#1}{#2}}}
\newcommand{\norm}[1]{\left\lVert #1 \right\rVert}
\newcommand{\sect}[1]{{\Large \section{\bf #1}}}

\newtheorem{theorem}{Theorem}[section]
\newtheorem{lemma}[theorem]{Lemma}
\newtheorem{prop}[theorem]{Proposition}
\newtheorem{conj}[theorem]{Conjecture}
\newtheorem{cor}[theorem]{Corollary}

\newcounter{coundef}
\newtheorem{adef}[coundef]{Definition}

\renewcommand{\labelenumi}{(\roman{enumi})}

\numberwithin{equation}{section}

\bibliographystyle{plain}

\section{Introduction}
Let $p_N(n)$ denote the number of  partitions of $n$ into at most $N$ parts. As Euler showed, these restricted partitions have the generating function
\begin{equation}\label{eul}
\sum_{n=0}^\infty p_N(n) q^n = \prod_{j=1}^N \frac{1}{1-q^j}.
\end{equation}
Rademacher's $C_{hk\ell}(N)$ coefficients, see \cite[Eq. (130.5)]{Ra}, are uniquely defined  in the partial fraction decomposition of this generating function:
\begin{equation}\label{tp}
\prod_{j=1}^N \frac{1}{1-q^j}=\sum_{\substack{0\leqslant h<k \leqslant N \\ (h,k)=1}}
\sum_{\ell=1}^{\lfloor N/k \rfloor} \frac{C_{hk\ell}(N)}{(q-e^{2\pi ih/k})^\ell}.
\end{equation}
For a given $N$, knowing all the coefficients $C_{hk\ell}(N)$ allows us to express $p_N(n)$ as a finite sum,
\begin{equation}\label{eul2}
p_N(n)= \sum_{\substack{0\leqslant h<k \leqslant N \\ (h,k)=1}}
\sum_{\ell=1}^{\lfloor N/k \rfloor} C_{hk\ell}(N) \binom{\ell-1+n}{\ell-1} (-1)^\ell e^{-2\pi i h(\ell+n)/k},
\end{equation}
 where \eqref{eul2} follows from using the binomial theorem to write the right side of \eqref{tp} as a power series in $q$, and equating coefficients with the left side of \eqref{eul}.

So, for example, when $N=2$ we have
\begin{equation*}
    \frac 1{(1-q)(1-q^2)} = \frac{C_{011}(2)}{q-1}+\frac{C_{012}(2)}{(q-1)^2}+\frac{C_{121}(2)}{q+1}
\end{equation*}
for $C_{011}(2)=-1/4$, $C_{012}(2)=1/2$, $C_{121}(2)=1/4$ and \eqref{eul2} implies $p_2(n) = (2n+3+(-1)^n)/4$. As in \cite[pp. 221-222]{Ra}, this may be written as $p_2(n) = \lfloor n/2 \rfloor +1$.

Writing in \cite[p. 301]{Ra}, Rademacher lamented the lack of formulas for $C_{hk\ell}(N)$.  Andrews, in \cite{An}, provided the first one as we see later in \eqref{Andrews}. However, Andrews' formula does not allow easy calculation.  Very recently, Sills and Zeilberger in \cite{SZ} showed a fast recursive method to compute $C_{hk\ell}(N)$ for large $N$ and, when the difference $N-\ell$ is fixed, they  solved the recursion to prove formulas like
\begin{equation*}
    C_{01N}(N)= \frac{(-1)^N}{N!}, \qquad C_{01(N-1)}(N)= \frac{(-1)^{N+1}}{4(N-2)!},
\end{equation*}
as we describe in Section \ref{zel}.

In this paper we provide many new and  explicit expressions for the coefficients $C_{hk\ell}(N)$. Section \ref{tw} develops formulas for the simplest case $h/k=0/1$, showing directly that $C_{01\ell}(N)$ is always rational. For example,
\begin{equation}\label{c011}
    C_{011}(N) = \frac{(-1)^N }{ N!} \sum_{j_0+j_1+j_2+ \cdots + j_N = N-1}
     B_{j_1}B_{j_2}  \cdots  B_{j_N} \frac{ 1^{j_1} 2^{j_2} \cdots  N^{j_N}}{ j_0! j_1 !
j_2 ! \cdots j_N!}
\end{equation}
is a special case of \eqref{c01ra}, with $B_j$ the $j$th Bernoulli number. In Sections \ref{next} - \ref{s4} we treat the general case, describing the closely related work of Sylvester and Glaisher in Section \ref{sylvv}. The intriguing conjecture of Rademacher on the behavior of  $C_{hk\ell}(N)$ as $N \to \infty$ is discussed in Section \ref{rcoj}. This old conjecture has motivated much of the study of these coefficients and we describe a forthcoming result on the asymptotics of $C_{011}(N)$ that should in fact disprove it. In Section \ref{zel} we use the techniques we have developed to partially resolve a conjecture of Sills and Zeilberger, and in the last section  Andrews' method is extended to find further formulas for $C_{hk\ell}(N)$.

\section{Initial Formulas for Rademacher's coefficients} \label{tw}
From the definition \eqref{tp} we see
$$
C_{hk\ell}(N) = \res_{q=e^{2\pi i h/k}} \left(q - e^{2\pi i h/k}\right)^{\ell-1} \prod_{j=1}^N \frac{1}{1-q^j}.
$$
With the natural change of variables $q=e^{2\pi i z}$ we obtain
\begin{equation}\label{key}
    C_{hk\ell}(N) = 2\pi i \res_{z = h/k} \frac{e^{2\pi i z}\left(e^{2\pi i z} - e^{2\pi i h/k}\right)^{\ell-1}}{(1-e^{2\pi i z})(1-e^{2\pi i 2z}) \cdots (1-e^{2\pi i N z}) }
\end{equation}
where we used that
\begin{equation*}
    \res_{q=e^{2\pi i c}}f(q) = \res_{z = c}f(e^{2\pi i z}) \cdot 2 \pi i \cdot e^{2\pi i z}
\end{equation*}
which is implied by part (i) of the following result of Jacobi on residue composition (with $g(z)=e^{2\pi i z}$).

\begin{theorem} \label{rc}
Suppose $g(z)$ is  holomorphic in a neighborhood of $z=c$ and suppose $f(z)$ is meromorphic in a neighborhood of $z=g(c)$.
\begin{enumerate}
\item If $g'(c) \neq 0$ then
$$
\res_{z = g(c)}f(z) = \res_{z = c}f(g(z)) g'(z).
$$
\item More generally, if $g(z)-g(c)$ has a zero of order $m$ at $z=c$ then
$$
m \res_{z = g(c)}f(z) = \res_{z = c}f(g(z)) g'(z).
$$
\end{enumerate}
\end{theorem}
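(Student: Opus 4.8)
The plan is to prove (ii) directly, since (i) is simply the special case $m=1$: a holomorphic $g$ with $g'(c)\neq 0$ has $g(z)-g(c)$ vanishing to order exactly one at $c$. Both sides are residues, which I realize as contour integrals over a small positively oriented circle $\gamma=\{|z-c|=\rho\}$, with $\rho$ chosen small enough that $g$ is holomorphic on and inside $\gamma$ and that $c$ is the only zero of $g(z)-g(c)$ in the closed disk — possible since the zeros of the nonconstant holomorphic function $g(z)-g(c)$ are isolated. Thus $\res_{z=c} f(g(z))g'(z) = \frac{1}{2\pi i}\oint_\gamma f(g(z))g'(z)\,dz$.

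First I would perform the change of variable $w=g(z)$ in this integral. This is just the elementary substitution rule for contour integrals along a $C^1$ path and needs no injectivity of $g$: it gives $\oint_\gamma f(g(z))g'(z)\,dz = \oint_{g\circ\gamma} f(w)\,dw$, where $g\circ\gamma$ denotes the image loop. By shrinking $\rho$ further I can ensure this image loop lies in a punctured neighborhood of $g(c)$ on which $f$ is holomorphic (using that the singularities of $f$ are isolated and $g$ is continuous). The residue theorem for cycles then yields $\frac{1}{2\pi i}\oint_{g\circ\gamma} f(w)\,dw = n(g\circ\gamma, g(c))\cdot \res_{w=g(c)} f(w)$, where $n(\cdot,\cdot)$ is the winding number.

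It remains to show the winding number equals $m$, which is exactly the argument principle: $n(g\circ\gamma, g(c)) = \frac{1}{2\pi i}\oint_{g\circ\gamma}\frac{dw}{w-g(c)} = \frac{1}{2\pi i}\oint_\gamma \frac{g'(z)}{g(z)-g(c)}\,dz$, and this last integral counts, with multiplicity, the zeros of $g(z)-g(c)$ inside $\gamma$; by the choice of $\rho$ the only such zero is at $z=c$, of order $m$, so the value is $m$. Combining the displays gives $\res_{z=c} f(g(z))g'(z) = m\,\res_{w=g(c)} f(w)$, which is (ii), and setting $m=1$ recovers (i).

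The technical heart is the two shrinking steps — isolating the relevant zero of $g(z)-g(c)$ and keeping the image loop within the holomorphy domain of $f$. An equivalent and perhaps cleaner route, avoiding the winding-number bookkeeping, is to expand $f$ in its Laurent series $f(w)=\sum_n a_n (w-g(c))^n$ about $g(c)$ and integrate term by term (justified by uniform convergence on the compact image of $\gamma$): for $n\neq -1$ the term $(g(z)-g(c))^n g'(z) = \tfrac{1}{n+1}\tfrac{d}{dz}(g(z)-g(c))^{n+1}$ is an exact derivative and contributes no residue, while the $n=-1$ term contributes $\res_{z=c}\frac{g'(z)}{g(z)-g(c)} = m$. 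Since $a_{-1}=\res_{w=g(c)} f$, summing again yields $m\,\res_{w=g(c)} f$.
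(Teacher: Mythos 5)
Your proof is correct, but note that the paper itself does not prove Theorem \ref{rc}: it cites Goulden--Jackson and Jacobi, remarking that ``it is really a result in formal power series.'' The cited proof is purely algebraic: writing $f(w)=\sum_n a_n\,(w-g(c))^n$ as a formal Laurent series, one observes that $(g(z)-g(c))^n g'(z)$ is an exact derivative for $n\neq -1$ (hence has residue $0$), while for $n=-1$ the logarithmic derivative $g'/(g-g(c))$ has residue $m$ because $g(z)-g(c)=(z-c)^m h(z)$ with $h(c)\neq 0$; this works over any field of characteristic zero with no analysis at all. Your second route (Laurent expansion plus term-by-term integration) is essentially this argument dressed in analytic clothing, the only extra work being the justification of termwise integration by uniform convergence on the compact set $g(\gamma)$. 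Your first route --- contour integration, the substitution $w=g(z)$, the homology form of the residue theorem, and the argument principle to identify the winding number $n(g\circ\gamma,g(c))=m$ --- is genuinely different and arguably more conceptual: it explains \emph{why} the factor $m$ appears (the image loop winds $m$ times around $g(c)$), at the cost of the two shrinking steps you correctly identify as the technical heart. One small point of care in that route: the shrinking must guarantee that $g$ maps the entire closed disk (not merely the circle $\gamma$) into a neighborhood of $g(c)$ containing no other singularity of $f$, so that $f(g(z))g'(z)$ is holomorphic on the punctured disk and the integral over $\gamma$ genuinely computes $\res_{z=c}$; this follows from continuity of $g$ exactly as you argue for the loop, so it is a wording issue rather than a gap. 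For the purposes of this paper, where $g(z)=e^{2\pi i z}$ and $g'$ never vanishes, only the elementary case $m=1$ is ever used.
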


For the proof (it is really a result in formal power series) see \cite[Theorem 1.2.2, p. 15]{GJ} or the original \cite{Ja}.
The above derivation of \eqref{key} is based on an almost identical calculation  due to Beck, Gessel and Komatsu in \cite[pp. 3-4]{Be1} where they
derive formulas for  the `polynomial part' of the restricted partition function.  In fact, as a referee has pointed out,
the work in \cite{Be1} is a rediscovery of results of Sylvester \cite{Sy3} and Glaisher \cite{Gl};
see the discussion in Section \ref{sylvv}.

From now on we let $\rho := e^{2\pi i h/k}$.  Replacing $2\pi i z$ in \eqref{key} by $z$ and then $z + 2\pi i h/k$ we  obtain
\begin{align}\label{cabr1}
C_{hk\ell}(N) & =\operatornamewithlimits{Res}_{z=2\pi i h/k}  \frac{e^z(e^z-\rho)^{\ell-1}}{(1-e^z)(1-e^{2z}) \cdots (1-e^{Nz})}
\\
 & = (-1)^N \rho^\ell \operatornamewithlimits{Res}_{z=0}  e^z(e^z-1)^{\ell-1}
\left(\frac{1}{\rho e^z - 1}\right)\left(\frac{1}{\rho^2 e^{2z} - 1}\right) \cdots
\left(\frac{1}{\rho^N e^{N z} - 1}\right) \label{res0} \\
   & = \frac{(-1)^N \rho^\ell}{N!} \left[ \text{coeff. of }z^{N-1} \right]e^z (e^z-1)^{\ell-1}
\left(\frac{z}{\rho e^z - 1}\right)\left(\frac{2z
}{\rho^2 e^{2z} - 1}\right) \cdots \left(\frac{N z}{\rho^N e^{Nz} - 1}\right). \label{cabraaa}
\end{align}
Then \eqref{cabraaa} can be made more explicit by inserting the relevant power series. For the remainder of this section we focus on $h/k=0/1$ so that $\rho=1$.

Equation \eqref{cabraaa} implies
\begin{equation} \label{c000}
    C_{01\ell}(N) = \frac{(-1)^N}{N!} \left[ \text{coeff. of }z^{N-\ell} \right]e^z \left(\frac{e^z - 1}{z}\right)^{\ell-1}
\left(\frac{z}{e^z - 1}\right)\left(\frac{2z
}{e^{2z} - 1}\right) \cdots \left(\frac{N z}{e^{Nz} - 1}\right).
\end{equation}
Recall the well-known power series
\begin{align}\label{bern}
    \frac{z e^{tz}}{e^z-1} & = \sum_{n=0}^\infty B_n(t) \frac{z^n}{n!}, \qquad (|z|<2\pi)\\
    \left(\frac{e^z - 1}{z}\right)^m & = m! \sum_{n=0}^\infty \stirb{m+n}{m} \frac{z^n}{(m+n)!} \qquad (m \gqs 0)\label{stb}
\end{align}
with $B_n(t)$ the $n$th Bernoulli polynomial, $B_n:=B_n(0)$ the $n$th Bernoulli number and $\stirb{n}{m}$ the Stirling number,  denoting the number of ways to partition a set of size $n$ into $m$  non-empty subsets (see \cite[Eq. (7.49)]{Knu} for \eqref{stb}). 

\begin{prop} \label{cr}
We have
\begin{align}
    C_{01\ell}(N) & = \frac{(-1)^N (\ell-1)!}{ N!} \sum_{i+j_0+j_1+j_2+ \cdots + j_N = N-\ell}
    \stirb{\ell-1+j_0}{\ell-1} \frac{B_{j_1}B_{j_2}  \cdots  B_{j_N}}{i! (\ell-1+j_0)!}\frac{1^{j_1} 2^{j_2} \cdots
N^{j_N}}{ j_1 ! j_2 ! \cdots j_N!} \label{c01ra}\\
& = \frac{(-1)^N (\ell-1)!}{ N!} \sum_{j_0+j_1+j_2+ \cdots + j_N = N-\ell}
    \stirb{\ell-1+j_0}{\ell-1} \frac{B_{j_1}B_{j_2}  \cdots  B_{j_N}}{(\ell-1+j_0)!}\frac{ (-1)^{j_1} 2^{j_2} \cdots
N^{j_N}}{ j_1 ! j_2 ! \cdots j_N!}. \label{c01r}
\end{align}
\end{prop}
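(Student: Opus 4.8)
The plan is to read off the coefficient of $z^{N-\ell}$ in \eqref{c000} by substituting the power series of each factor and forming the Cauchy product. Every factor is holomorphic at $z=0$ (each $\frac{mz}{e^{mz}-1}$ is analytic for $|z|<2\pi/m$), so the product is analytic near the origin and the coefficient extraction is a genuine finite sum.

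First I would expand the three types of factors separately. The leading $e^z$ contributes $\sum_i z^i/i!$ with index $i$. The factor $\left(\frac{e^z-1}{z}\right)^{\ell-1}$ is expanded by \eqref{stb} with $m=\ell-1$, giving $(\ell-1)!\sum_{j_0}\stirb{\ell-1+j_0}{\ell-1}\frac{z^{j_0}}{(\ell-1+j_0)!}$ with index $j_0$. Each of the $N$ factors $\frac{mz}{e^{mz}-1}$ (for $m=1,\dots,N$) is expanded by \eqref{bern} at $t=0$ with $z$ replaced by $mz$, producing $\sum_{j_m} B_{j_m}\frac{m^{j_m}}{j_m!} z^{j_m}$. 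Multiplying these together, collecting the terms of total degree $i+j_0+j_1+\cdots+j_N=N-\ell$, and restoring the prefactor $(-1)^N/N!$ yields \eqref{c01ra} directly.

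To obtain \eqref{c01r} I would eliminate the index $i$ by merging the leading $e^z$ with the first Bernoulli factor $\frac{z}{e^z-1}$:
$$e^z\cdot\frac{z}{e^z-1}=\frac{ze^z}{e^z-1}=\sum_{n=0}^\infty B_n(1)\frac{z^n}{n!},$$
which is \eqref{bern} at $t=1$. Using the reflection identity $B_n(1)=(-1)^n B_n$, this combined factor equals $\sum_{n}(-1)^n B_n z^n/n!$; renaming its index $j_1$, the separate $e^z$ and $m=1$ contributions collapse into the single weight $(-1)^{j_1}B_{j_1}/j_1!$ (the harmless $1^{j_1}$ from the $m=1$ factor reappearing as the sign $(-1)^{j_1}$). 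The summation constraint becomes $j_0+j_1+\cdots+j_N=N-\ell$, which is exactly \eqref{c01r}. Equivalently, \eqref{c01r} follows from \eqref{c01ra} by carrying out the inner convolution $\sum_{i+j_1=s}\frac{1}{i!}\frac{B_{j_1}}{j_1!}=\frac{(-1)^s B_s}{s!}$, which is the same Bernoulli identity read as a coefficient statement.

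The argument is essentially bookkeeping, so I expect no serious obstacle. The one point needing care is the reflection identity $B_n(1)=(-1)^n B_n$ invoked in passing from \eqref{c01ra} to \eqref{c01r}, together with aligning the factorials in the Stirling term so that the weights $(\ell-1)!/(\ell-1+j_0)!$ emerge correctly from \eqref{stb}; beyond careful index management nothing deeper is required.
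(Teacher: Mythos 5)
Your proposal is correct and follows essentially the same route as the paper: expand each factor of \eqref{c000} by \eqref{bern} and \eqref{stb} and collect the coefficient of $z^{N-\ell}$ for \eqref{c01ra}, then merge $e^z$ with $\tfrac{z}{e^z-1}$ via $\tfrac{ze^z}{e^z-1}=\sum_r(-1)^rB_r\tfrac{z^r}{r!}$ for \eqref{c01r}. The paper derives the sign $(-1)^r$ from $\tfrac{ze^z}{e^z-1}=\tfrac{-z}{e^{-z}-1}$ rather than quoting the reflection identity $B_r(1)=(-1)^rB_r$, but this is the same computation.
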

\begin{proof}
It is clear, using \eqref{bern} with $t=0$, that
\begin{equation*}
    \left(\frac{z}{e^z - 1}\right)\left(\frac{2z
}{e^{2z} - 1}\right) \cdots \left(\frac{N z}{e^{Nz} - 1}\right) = \sum_{r=0}^\infty z^r \sum_{j_1+j_2+ \cdots + j_N = r}
      B_{j_1}B_{j_2}  \cdots  B_{j_N} \frac{ 1^{j_1} 2^{j_2} \cdots
N^{j_N}}{j_1 ! j_2 ! \cdots j_N!}.
\end{equation*}
Combining this expansion with the power series for $e^z$ ($=\sum_i z^i/i!$) and $\left(\frac{e^z - 1}{z}\right)^{\ell-1}$ given by \eqref{stb} yields
\eqref{c01ra}.
For \eqref{c01r}, combine $e^z$  and $\left(\frac{z}{e^z - 1}\right)$ noting that
\begin{equation*}
    \frac{z e^z}{e^z-1} = \frac{-z}{e^{-z}-1} = \sum_{r=0}^\infty (-1)^r B_r \frac{z^r}{r!}.
\end{equation*}
Of course the coefficients $(-1)^r B_r$ above must equal $B_r(1)$ by \eqref{bern}, so we can replace the factor $B_{j_1} \cdot (-1)^{j_1}$ in \eqref{c01r} by the neater $B_{j_1}(1) \cdot 1^{j_1}$.
\end{proof}

For a perhaps more natural treatment, rewrite \eqref{c000} as
\begin{equation} \label{c100}
    C_{01\ell}(N) = \frac{(-1)^N}{N!} \left[ \text{coeff. of }z^{N-\ell} \right]e^z \left(\frac{z}{e^z - 1}\right)^{1-\ell}
\left(\frac{z}{e^z - 1}\right)\left(\frac{2z
}{e^{2z} - 1}\right) \cdots \left(\frac{N z}{e^{Nz} - 1}\right).
\end{equation}
We need a generalization of \eqref{bern}. The  Bernoulli polynomials of order $a$, denoted $B_n^{(a)}(t)$,  are defined by
\begin{equation*}
    \left(\frac{z }{e^z-1}\right)^a e^{tz} = \sum_{n=0}^\infty B_n^{(a)}(t) \frac{z^n}{n!}, \qquad (|z|<2\pi)
\end{equation*}
as in \cite[Eq. (24.16.1)]{DLMF}. It may be shown that $B_n^{(a)}(t)$ is a polynomial of degree $n$ in the variables $a$, $t$.
For $a=1$, $B_n^{(1)}(t)$ reduces to the usual Bernoulli polynomial $B_n(t)$. For $t=0$, $B_n^{(a)}:=B_n^{(a)}(0)$ is the N\"orlund polynomial in $a$.

\begin{prop} \label{crr}
We have
\begin{align}
C_{01\ell}(N) & = \frac{(-1)^N}{ N!} \sum_{j_0+j_1+j_2+ \cdots + j_N = N-\ell}
    B_{j_0}^{(1-\ell)}(1) \cdot B_{j_1}B_{j_2}  \cdots  B_{j_N} \frac{1^{j_1} 2^{j_2} \cdots
N^{j_N}}{j_0 ! j_1 ! j_2 ! \cdots j_N!} \label{c01rd}\\
& = \frac{(-1)^N (\ell-1)!}{ N!} \sum_{j_0+j_1+j_2+ \cdots + j_N = N-\ell}
     \stirb{\ell+j_0}{\ell}\frac{B_{j_1}B_{j_2}  \cdots  B_{j_N}}{(\ell-1+j_0)!}\frac{ 1^{j_1} 2^{j_2} \cdots
N^{j_N}}{j_1 ! j_2 ! \cdots j_N!}. \label{c01rf}
\end{align}
\end{prop}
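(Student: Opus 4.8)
The plan is to handle \eqref{c01rd} and \eqref{c01rf} in turn, starting from the coefficient extraction \eqref{c100} and reusing the product expansion already computed in the proof of Proposition \ref{cr}.

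For \eqref{c01rd}, the key observation is that the leading factor $e^z\bigl(z/(e^z-1)\bigr)^{1-\ell}$ is exactly the generating function defining the order-$(1-\ell)$ Bernoulli polynomials evaluated at $t=1$; that is, setting $a=1-\ell$ and $t=1$ in the definition preceding the proposition gives $e^z\bigl(z/(e^z-1)\bigr)^{1-\ell} = \sum_{j_0\gqs 0} B_{j_0}^{(1-\ell)}(1)\, z^{j_0}/j_0!$. Since $\ell\gqs 1$ the order is nonpositive, so this factor is merely the entire function $\bigl((e^z-1)/z\bigr)^{\ell-1}e^z$ and there are no convergence issues. First I would multiply this series by the product expansion
\[
\left(\frac{z}{e^z - 1}\right)\cdots\left(\frac{Nz}{e^{Nz}-1}\right) = \sum_{r\gqs 0} z^r \sum_{j_1+\cdots+j_N=r} B_{j_1}\cdots B_{j_N}\frac{1^{j_1}\cdots N^{j_N}}{j_1!\cdots j_N!}
\]
recorded in the proof of Proposition \ref{cr}, and then read off the coefficient of $z^{N-\ell}$. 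The power $z^{j_0}$ from the first series and $z^r$ from the product combine subject to $j_0+r=N-\ell$, i.e.\ $j_0+j_1+\cdots+j_N=N-\ell$, which is precisely the index set of \eqref{c01rd}. This step is purely formal.

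To pass from \eqref{c01rd} to \eqref{c01rf}, I would establish the closed form
\[
B_{j_0}^{(1-\ell)}(1) = \frac{j_0!\,(\ell-1)!}{(\ell-1+j_0)!}\,\stirb{\ell+j_0}{\ell}.
\]
To see this, rewrite the generating factor as $e^z\bigl(z/(e^z-1)\bigr)^{1-\ell} = e^z(e^z-1)^{\ell-1}/z^{\ell-1}$ and use the antiderivative identity $e^z(e^z-1)^{\ell-1} = \tfrac1\ell\,\tfrac{d}{dz}(e^z-1)^\ell$. Expanding $(e^z-1)^\ell = z^\ell\bigl((e^z-1)/z\bigr)^\ell$ via \eqref{stb} with $m=\ell$, differentiating term by term, and dividing by $\ell$ and then $z^{\ell-1}$ yields
\[
e^z\left(\frac{z}{e^z-1}\right)^{1-\ell} = (\ell-1)!\sum_{j_0\gqs 0}\stirb{\ell+j_0}{\ell}\frac{z^{j_0}}{(\ell-1+j_0)!};
\]
comparing coefficients of $z^{j_0}/j_0!$ with the Bernoulli generating function gives the displayed formula. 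Substituting it into \eqref{c01rd} cancels the $j_0!$ in the denominator and produces \eqref{c01rf} directly.

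The one slightly delicate point — the \emph{main obstacle} — is this closed form for $B_{j_0}^{(1-\ell)}(1)$; everything else is formal manipulation of power series. The differentiation trick $e^z(e^z-1)^{\ell-1}=\tfrac1\ell(d/dz)(e^z-1)^\ell$ is exactly what converts the product $e^z(e^z-1)^{\ell-1}$, which would otherwise require a Cauchy product of $e^z$ against the Stirling expansion \eqref{stb} (and thereby reintroduce the extra summation index $i$ of \eqref{c01ra}), into a single clean Stirling sum with the shifted number $\stirb{\ell+j_0}{\ell}$ and factorial $(\ell-1+j_0)!$ appearing in \eqref{c01rf}.
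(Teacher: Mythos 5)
Your proposal is correct, and its overall architecture matches the paper's: derive \eqref{c01rd} by reading the factor $e^z(z/(e^z-1))^{1-\ell}$ in \eqref{c100} as the generating function of $B_{j_0}^{(1-\ell)}(1)$ and convolving with the product expansion from Proposition \ref{cr}, then pass to \eqref{c01rf} via the identity $B_{j}^{(1-\ell)}(1)/j! = (\ell-1)!\,\stirb{\ell+j}{\ell}/(\ell-1+j)!$, which is exactly \eqref{bjl}. Where you genuinely diverge is in how you establish \eqref{bjl}: the paper obtains it by combining two cited identities for higher-order Bernoulli numbers, namely $a\,B_n^{(a+1)}(1)=(a-n)B_n^{(a)}$ from Howard and $B_n^{(-r)}=\stirb{r+n}{r}\big/\binom{r+n}{r}$ from Carlitz, whereas you derive it from scratch by writing $e^z(e^z-1)^{\ell-1}=\tfrac1\ell\tfrac{d}{dz}(e^z-1)^{\ell}$, expanding $(e^z-1)^{\ell}$ with \eqref{stb}, and differentiating term by term (the division by $z^{\ell-1}$ is harmless since that series starts at $z^{\ell}$). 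Your computation checks out — the coefficient of $z^{j_0}/j_0!$ comes out to $(\ell-1)!\,j_0!\,\stirb{\ell+j_0}{\ell}/(\ell-1+j_0)!$ as claimed — and it has the merit of being self-contained, using only ingredients already present in Section \ref{tw}; the paper's route is shorter on the page but leans on two external references. Both are valid proofs of the same key lemma.
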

\begin{proof}
Equation \eqref{c01rd} is clear from \eqref{c100}. Then \eqref{c01rf} follows
with
\begin{equation} \label{bjl}
    \frac{B_{j}^{(1-\ell)}(1)}{j!}= \frac{(\ell-1)!}{(\ell-1+j)!}\stirb{\ell+j}{\ell}, \qquad (j \gqs 0, \ \ell \gqs 1).
\end{equation}
In turn, \eqref{bjl} is a consequence of the identities
\begin{alignat}{2}\label{how}
    a \cdot B_n^{(a+1)}(1) & = (a-n) B_n^{(a)}, \qquad & &(n \in \Z_{\gqs 0}, \ a \in \C)\\
    B_n^{(-r)} & = \stirb{r+n}{r}\Big/\binom{r+n}{r}, \qquad & &(n \in \Z_{\gqs 0}, r \in \Z_{\gqs 1}) \label{norm}
\end{alignat}
where \eqref{how} is from \cite[Eq. (2.17)]{Ho} and \eqref{norm} is  \cite[Eq. (3)]{Ca}.
\end{proof}

The formulas for $C_{01\ell}(N)$ in Propositions \ref{cr} and \ref{crr} simplify for $\ell =1$. For example,
since $\stirb{j}{0} = \delta_{0,j}$, \eqref{c01ra} and \eqref{c01rf} imply the formula \eqref{c011} we saw in the introduction.

 We may recast our results using  ideas from \cite{Sy2}, \cite[pp. 3-4]{Be1}.  The expansion
\begin{equation} \label{lg}
    \log \left(\frac{z }{e^{z}-1} \right) = \sum_{m=1}^\infty (-1)^{m-1} \frac{B_m z^m}{m \cdot m!}
\end{equation}
may be proved by differentiating both sides. Define
\begin{equation}\label{smn}
s_m(N):=1^m+2^m+ \cdots + N^m
\end{equation}
and, after exponentiating \eqref{lg}, we obtain, (similarly to \cite[Eq. (6)]{Be1}),
\begin{multline} \label{ezz}
    e^z \left(\frac{z}{e^z - 1}\right)^{1-\ell}
\left(\frac{z}{e^z - 1}\right)\left(\frac{2z
}{e^{2z} - 1}\right) \cdots \left(\frac{N z}{e^{Nz} - 1}\right)\\
=\exp\left(z+ \sum_{m=1}^\infty (-1)^{m-1} \frac{B_m z^m}{m \cdot m!}\left(1-\ell+1^m +2^m + \cdots + N^m \right)\right)\\
     =\sum_{r=0}^\infty z^r \sum_{j_0+1j_1+2j_2+ \cdots r j_r = r}
     \frac{1}{j_0! j_1! j_2! \cdots j_r!}
     \left(\frac{B_1}{1 \cdot 1!}\bigl(s_1(N)+1-\ell\bigr) \right)^{j_1}  \cdots \left(\frac{(-1)^{r-1}B_r}{r \cdot r!}\bigl(s_r(N)+1-\ell \bigr) \right)^{j_r}.
\end{multline}
Combining \eqref{ezz} with \eqref{c100} produces
\begin{prop} \label{cr2}
We have
\begin{multline*}
    C_{01\ell}(N) = \frac{(-1)^N }{ N!} \sum_{j_0+1j_1+2j_2+ \cdots + N j_{N} = N-\ell}
     \frac{1}{j_0! j_1! j_2! \cdots j_N!} \\
    \times
     \left(\frac{B_1}{1 \cdot 1!}\bigl(s_1(N)+1-\ell\bigr) \right)^{j_1}  \cdots \left(\frac{(-1)^{N-1}B_{N}}{N \cdot N!}\bigl(s_{N}(N)+1-\ell \bigr) \right)^{j_{N}}.
\end{multline*}
\end{prop}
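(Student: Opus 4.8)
The plan is to convert the product appearing in \eqref{c100} into a single exponential by taking logarithms, and then to read off the required Taylor coefficient via the multinomial theorem. The starting observation is that each factor $\frac{jz}{e^{jz}-1}$ has a removable singularity at $z=0$, where it takes the nonzero value $1$, so the whole product
$$
F(z):=e^z \left(\frac{z}{e^z - 1}\right)^{1-\ell} \prod_{j=1}^N \frac{jz}{e^{jz}-1}
$$
is holomorphic and nonvanishing in a neighbourhood of the origin (common to all the series, the binding constraint being $|z|<2\pi/N$). Hence $\log F(z)$ is a well-defined power series vanishing at $z=0$, and reconstructing $F=\exp(\log F)$ is legitimate.

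First I would compute $\log F(z)$ term by term. The factor $e^z$ contributes $z$. For each $j$, substituting $jz$ for $z$ in \eqref{lg} gives $\log\left(\frac{jz}{e^{jz}-1}\right)=\sum_{m=1}^\infty (-1)^{m-1}\frac{B_m j^m z^m}{m\cdot m!}$, and summing over $1\lqs j\lqs N$ replaces $j^m$ by $s_m(N)$ from \eqref{smn}. The factor $\left(\frac{z}{e^z-1}\right)^{1-\ell}$ contributes $(1-\ell)$ times the $j=1$ series. Collecting the coefficient of each $z^m$ then yields precisely the exponent recorded in \eqref{ezz}, namely
$$
\log F(z)=z+\sum_{m=1}^\infty (-1)^{m-1}\frac{B_m z^m}{m\cdot m!}\bigl(s_m(N)+1-\ell\bigr).
$$

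Finally I would exponentiate. Writing $F=e^z\cdot\exp\bigl(\sum_{m\gqs 1}c_m z^m\bigr)$ with $c_m:=(-1)^{m-1}\frac{B_m}{m\cdot m!}(s_m(N)+1-\ell)$, I expand $e^z=\sum_{j_0\gqs 0} z^{j_0}/j_0!$ and apply the multinomial theorem to the remaining factor. The power of $z$ in a generic term is $j_0+1j_1+2j_2+\cdots$, so extracting $[z^{N-\ell}]$ imposes the constraint $j_0+1j_1+\cdots+Nj_N=N-\ell$ (one may harmlessly let the indices run up to $N$, since any $j_m>0$ with $m>N-\ell$ would already violate it), with matching coefficient $\frac{1}{j_0!}\prod_{m\gqs 1} c_m^{j_m}/j_m!$. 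Multiplying by the prefactor $(-1)^N/N!$ from \eqref{c100} produces the stated formula. The only point requiring care is the bookkeeping: the $z$ arising from $e^z$ must be tracked through the separate index $j_0$ rather than merged into the $m=1$ Bernoulli term (which instead carries the $B_1=-1/2$ weight through $j_1$). Once this separation is respected there is no genuine obstacle, and the identity follows directly from combining \eqref{ezz} with \eqref{c100}.
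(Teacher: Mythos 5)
Your argument is correct and is essentially the paper's own proof: both take the logarithm of the product in \eqref{c100} via \eqref{lg}, sum over $j$ to produce $s_m(N)$, exponentiate to obtain \eqref{ezz}, and extract the coefficient of $z^{N-\ell}$ by the multinomial expansion with $j_0$ tracking the $e^z$ factor. Your remarks on the domain of holomorphy and on keeping $j_0$ separate from the $B_1$ term are correct refinements of the bookkeeping the paper leaves implicit.
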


(The indices $j_{N-\ell+1}, \dots ,j_{N}$ are included only to simplify the statement.) The method of proof of Proposition \ref{cr2} will be important in Section \ref{zel}.


\section{Formulas for general Rademacher coefficients} \label{next}

Following Apostol in \cite[Eq. (3.1)]{Ap},  write
\begin{equation}\label{apb}
\frac{z}{\rho e^z - 1} = \sum_{m=0}^\infty \beta_m(\rho) \frac{z^m}{m!} \qquad (\rho \in \C).
\end{equation}
Then the next result has a similar proof to Proposition \ref{crr}.
\begin{prop} \label{ch}
We have
\begin{align}
    C_{hk\ell}(N)
& = \frac{(-1)^N \rho^{\ell}}{ N!} \sum_{j_0+j_1+ \cdots + j_N = N-\ell}
B_{j_0}^{(1-\ell)}(1) \cdot \beta_{j_1}(\rho)\beta_{j_2}(\rho^{2})  \cdots
\beta_{j_N}(\rho^{N})
     \frac{ 1^{j_1} 2^{j_2} \cdots  N^{j_N}}{j_0 ! j_1 ! j_2 ! \cdots
j_N!}\label{form1a}\\
& = \frac{(-1)^N \rho^{\ell}(\ell-1)!}{ N!} \sum_{j_0+j_1+ \cdots + j_N = N-\ell}
    \stirb{\ell+j_0}{\ell} \frac{ \beta_{j_1}(\rho)\beta_{j_2}(\rho^{2})  \cdots
\beta_{j_N}(\rho^{N})}{(\ell-1+j_0)!}  \frac{1^{j_1} 2^{j_2} \cdots  N^{j_N}}{ j_1 ! j_2 ! \cdots
j_N!}.\label{form1b}
\end{align}
\end{prop}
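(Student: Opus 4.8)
The plan is to mirror the proof of Proposition \ref{crr} almost verbatim, simply replacing the Bernoulli expansion of $z/(e^{jz}-1)$ by the Apostol expansion \eqref{apb} of $jz/(\rho^j e^{jz}-1)$. First I would return to the exact formula \eqref{cabraaa} and rewrite the numerator as
\begin{equation*}
e^z(e^z-1)^{\ell-1} = z^{\ell-1}\, e^z \left(\frac{z}{e^z-1}\right)^{1-\ell},
\end{equation*}
exactly as in the passage from \eqref{c000} to \eqref{c100}. Pulling the factor $z^{\ell-1}$ out of the bracket lowers the index of coefficient extraction from $z^{N-1}$ to $z^{N-\ell}$, giving the clean starting point
\begin{equation*}
C_{hk\ell}(N) = \frac{(-1)^N \rho^\ell}{N!}\left[\text{coeff. of } z^{N-\ell}\right] e^z \left(\frac{z}{e^z-1}\right)^{1-\ell}\left(\frac{z}{\rho e^z-1}\right)\left(\frac{2z}{\rho^2 e^{2z}-1}\right)\cdots\left(\frac{Nz}{\rho^N e^{Nz}-1}\right),
\end{equation*}
the direct generalization of \eqref{c100}.

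Next I would expand each factor as a power series in $z$. The leading factor $e^z \left(z/(e^z-1)\right)^{1-\ell}$ is handled by the definition of the order-$a$ Bernoulli polynomials with $a=1-\ell$ and $t=1$, contributing $\sum_{j_0} B_{j_0}^{(1-\ell)}(1)\, z^{j_0}/j_0!$. For each of the remaining factors, the substitution $z \mapsto jz$ in \eqref{apb} gives
\begin{equation*}
\frac{jz}{\rho^j e^{jz}-1} = \sum_{m=0}^\infty \beta_m(\rho^j)\, j^m \frac{z^m}{m!},
\end{equation*}
which accounts for the powers $1^{j_1}2^{j_2}\cdots N^{j_N}$. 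Multiplying the $N+1$ resulting series, collecting by total degree, and reading off the coefficient of $z^{N-\ell}$ then yields \eqref{form1a}. Finally, \eqref{form1b} follows from \eqref{form1a} exactly as \eqref{c01rf} follows from \eqref{c01rd}: apply the identity \eqref{bjl} to convert $B_{j_0}^{(1-\ell)}(1)/j_0!$ into $(\ell-1)!\,\stirb{\ell+j_0}{\ell}/(\ell-1+j_0)!$, then factor the constant $(\ell-1)!$ out of the sum.

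The computation is essentially routine once the analogy with Proposition \ref{crr} is in place; the only point demanding care is the bookkeeping in the first step, namely correctly peeling off the factor $z^{\ell-1}$ so that the coefficient index shifts from $N-1$ to $N-\ell$, and faithfully tracking the scaling $z \mapsto jz$ in \eqref{apb} so that the weights $j^m$ appear with the right exponents. Because $\beta_m(\rho^j)$ specializes to $B_m$ when $\rho=1$, the entire argument collapses to the proof of Proposition \ref{crr} in that case, which serves as a useful consistency check.
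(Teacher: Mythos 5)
Your proposal is correct and matches the paper's approach exactly: the paper simply states that Proposition \ref{ch} "has a similar proof to Proposition \ref{crr}," and your write-up carries out precisely that adaptation, replacing the Bernoulli expansions by the Apostol expansion \eqref{apb} and finishing with \eqref{bjl}. All the bookkeeping (the shift from $z^{N-1}$ to $z^{N-\ell}$ and the scaling $z\mapsto jz$ producing the weights $j^{m}$) is handled correctly.
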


Clearly,
\begin{equation}\label{betmbm}
\beta_m(1)=B_m \qquad (m \gqs 0)
\end{equation}
so that \eqref{form1a}, \eqref{form1b} reduce to \eqref{c01rd} and \eqref{c01rf} for $h/k=0/1$.
To understand the Apostol-Bernoulli coefficients $\beta_m(\xi)$ for all $\xi$ we next express them in terms of the more familiar Bernoulli polynomials. For good measure we see they also have natural expressions  in terms of Stirling numbers
and the Hurwitz zeta function
$$\zeta(s,\alpha):=\sum_{n\in \Z_{\gqs 0}} (n+\alpha)^{-s}.$$

\begin{prop}
Let $\xi \in \C$ and let $m \in \Z_{\gqs 0}$. We have
\begin{alignat}{2}
\beta_m(\xi) & = (-1)^{m-1} m \sum_{j=1}^{m}  \stirb{m}{j} \frac{(j-1)!}{(\xi - 1)^j}
, \qquad & &(\xi \neq 1), \label{psi1}\\
& = k^{m-1} \sum_{j=0}^{k-1} \xi^{j} B_m(j/k), \qquad & &(\xi^k=1, \ k \in \Z_{\gqs 1}) \label{psi2}.
\end{alignat}
Also, for $a/b \in \Q$ with $0<a/b<1$ and $m \gqs 2$,
\begin{align}
\beta_m(e^{2\pi i a/b}) &= -\frac{m!}{(2\pi i)^m} \sum_{n \in \Z} \frac{1}{(n-a/b)^m} \label{psi4}\\
& = -\frac{m!}{(2\pi i)^m}\left(\zeta(m,1-a/b)+(-1)^m \zeta(m,a/b) \right). \label{psi3}
\end{align}
\end{prop}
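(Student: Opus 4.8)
The plan is to derive all four identities directly from the defining generating function \eqref{apb}, obtaining \eqref{psi1} and \eqref{psi2} as the two main computations and then deducing \eqref{psi4} and \eqref{psi3} from them. For \eqref{psi1}, since $\xi\neq 1$ the function $z/(\xi e^z-1)$ is holomorphic at $z=0$, and I would first rewrite it as
\[
\frac{z}{\xi e^z-1}=\frac{ze^{-z}}{\xi-e^{-z}}=\frac{ze^{-z}}{\xi-1}\cdot\frac{1}{1-(e^{-z}-1)/(\xi-1)}.
\]
In a neighbourhood of $z=0$ we have $|e^{-z}-1|<|\xi-1|$, so the last factor expands as a geometric series, giving $\frac{ze^{-z}}{\xi-1}\sum_{j\gqs 0}(e^{-z}-1)^j/(\xi-1)^j$. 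Substituting the Stirling generating function $(e^{-z}-1)^j=j!\sum_m\stirb{m}{j}(-z)^m/m!$ (a form of \eqref{stb}), collecting the coefficient of $z^m$, and matching against $\beta_m(\xi)/m!$ then yields \eqref{psi1}. The sign bookkeeping—combining the $ze^{-z}$ prefactor with the series and passing from $j!$ to $(j-1)!$ while producing the factor $(-1)^{m-1}m$—is the one spot in this step that needs care.

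For \eqref{psi2}, with $\xi^k=1$ I would use the multisection identity: setting $X=\xi e^z$, so that $X^k=e^{kz}$, the relation $\frac{1}{X-1}=(1+X+\cdots+X^{k-1})/(X^k-1)$ gives $\frac{z}{\xi e^z-1}=\sum_{j=0}^{k-1}\xi^j\frac{ze^{jz}}{e^{kz}-1}$. Applying the Bernoulli generating function \eqref{bern} after the rescaling $z\mapsto kz$, namely $\frac{ze^{jz}}{e^{kz}-1}=\frac{1}{k}\sum_n B_n(j/k)(kz)^n/n!$, and reading off the coefficient of $z^m/m!$ produces $k^{m-1}\sum_{j=0}^{k-1}\xi^j B_m(j/k)$. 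This is a short, clean computation with no real obstacle.

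For \eqref{psi4}, with $\xi=e^{2\pi i a/b}$ the function $1/(\xi e^z-1)$ is meromorphic with simple poles exactly at $z_n=2\pi i(n-a/b)$, $n\in\Z$, each of residue $1$. The key step is a Mittag-Leffler expansion: differentiating $p\gqs 1$ times removes the entire part and yields
\[
\frac{d^p}{dz^p}\frac{1}{\xi e^z-1}=(-1)^p\,p!\sum_{n\in\Z}\frac{1}{(z-z_n)^{p+1}},
\]
an absolutely convergent series for $p\gqs 1$. Evaluating at $z=0$, inserting $z_n=2\pi i(n-a/b)$, and comparing with the Taylor coefficient $\beta_{p+1}(\xi)/(p+1)!$ of $1/(\xi e^z-1)$ then gives \eqref{psi4} with $m=p+1\gqs 2$. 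I expect the main obstacle to be the rigorous justification of this Mittag-Leffler step: showing that no entire summand survives and that differentiation may be interchanged with the sum, using decay estimates for $1/(\xi e^z-1)$ in vertical strips. This is precisely why the identity is restricted to $m\gqs 2$, the range in which $\sum_n|n-a/b|^{-m}$ converges absolutely.

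Finally, \eqref{psi3} follows from \eqref{psi4} by splitting the bilateral sum. Reindexing the terms with $n\gqs 1$ via $k=n-1\gqs 0$ identifies $\sum_{n\gqs 1}(n-a/b)^{-m}$ with $\zeta(m,1-a/b)$, while putting $n=-k$ with $k\gqs 0$ gives $\sum_{n\lqs 0}(n-a/b)^{-m}=(-1)^m\zeta(m,a/b)$; the hypothesis $0<a/b<1$ keeps both Hurwitz arguments in $(0,1)$ so that the series converge. Substituting into \eqref{psi4} completes the proof, with no difficulty at this last step.
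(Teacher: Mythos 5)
Your proposal is correct, but for two of the four identities it takes a genuinely different route from the paper. For \eqref{psi2} and \eqref{psi3} you do exactly what the paper does (the multisection $\frac{z}{\xi e^z-1}=\sum_{j=0}^{k-1}\xi^j\frac{ze^{jz}}{e^{kz}-1}$ followed by \eqref{bern}, and the splitting of the bilateral sum into two Hurwitz zeta values). For \eqref{psi1}, however, the paper proves by induction, using the recurrence $\stirb{n}{j-1}+j\stirb{n}{j}=\stirb{n+1}{j}$, the closed form $\frac{d^n}{dz^n}\frac{1}{\xi e^z-1}=(-1)^n\sum_{j=1}^{n+1}\stirb{n+1}{j}\frac{(j-1)!}{(\xi e^z-1)^j}$ and then sets $z=0$; your geometric-series expansion in $(e^{-z}-1)/(\xi-1)$ combined with the Stirling generating function reaches the same formula more directly (the prefactor is handled cleanly by noting $e^{-z}(e^{-z}-1)^j=-\frac{1}{j+1}\frac{d}{dz}(e^{-z}-1)^{j+1}$, which is exactly where your $(-1)^{m-1}m$ and the shift from $j!$ to $(j-1)!$ come from), at the cost of a small convergence remark that the paper's purely formal induction avoids. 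For \eqref{psi4} the difference is more substantial: the paper deduces it from the already-proven \eqref{psi2} by inserting the Fourier expansion of $B_m(x)$ and summing over $j$, whereas you expand $1/(\xi e^z-1)$ over its poles $z_n=2\pi i(n-a/b)$ \`a la Mittag-Leffler. Your route is independent of \eqref{psi2} and would work for any $\xi=e^{2\pi i\theta}$ with $\theta\notin\Z$, not just roots of unity, but it obliges you to justify that the entire part of the pole expansion is constant (boundedness of $1/(\xi e^z-1)$ in vertical strips plus Liouville) and that termwise differentiation is legitimate — precisely the obstacle you flag; the paper's derivation outsources all the analysis to the standard Fourier series of the Bernoulli polynomials and is correspondingly shorter given the prior work.
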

\begin{proof}
An induction argument using the basic relation
\begin{equation*}
    \stirb{n}{j-1} + j \stirb{n}{j}  = \stirb{n+1}{j}
\end{equation*}
 shows
$$
\frac{d^n}{dz^n} \frac{1}{\xi e^z-1} = (-1)^n \sum_{j=1}^{n+1}  \stirb{n+1}{j}
\frac{(j-1)!}{\left(\xi e^z-1 \right)^{j}}
$$
and \eqref{psi1} follows.

When $\xi^k=1$ we have
$$
e^{kz}-1 = (\xi e^z)^{k}-1 = (\xi e^z-1)\left( 1 + \xi e^z+ \cdots +(\xi e^z)^{k-2} +(\xi e^z)^{k-1}
\right).
$$
Therefore
\begin{align*}
  \frac{z}{\xi e^z - 1} & = \frac{z }{e^{k z}-1} \sum_{j=0}^{k-1} \xi^{j} e^{j z} \\
   & =  \frac{1}k \sum_{j=0}^{k-1} \xi^{j} \frac{ k z \cdot e^{(j/k) \cdot kz} }{e^{k z}-1}\\
   & =  \frac{1}k \sum_{j=0}^{k-1} \xi^{j} \sum_{m=0}^\infty B_m(j/k)k^m  \frac{z^m}{m!}
\end{align*}
and this implies \eqref{psi2}.

Replace $B_m(x)$ in \eqref{psi2} with its Fourier expansion \cite[Eq. (24.8.3)]{DLMF}
$$
B_m(x)=-\frac{m!}{(2\pi i)^m} \sum_{n \neq 0} \frac{e^{2\pi i n x}}{n^m} \qquad (m \gqs 2, \ x\in [0,1]).
$$
Computing the resulting sum over $j$ and rearranging shows \eqref{psi4} and \eqref{psi3}.
\end{proof}

Thus we have from \eqref{psi1}, when $\xi \neq 1$,
$$
  \beta_0(\xi) = 0, \qquad
  \beta_1(\xi) =  \frac{1}{\xi-1}, \qquad
  \beta_2(\xi) = \frac{-2}{\xi-1} + \frac{-2}{(\xi-1)^2}= \frac{-2\xi}{(\xi-1)^2}, \quad \text{ etc.}
$$
A formula   equivalent to \eqref{psi1} was given by Glaisher in \cite[\S 97]{Gl}, (in his notation $F_n(x)=-\beta_n(x)/(n \cdot n!)$ for $n \gqs 2$ and the difference $\Delta^j 0^n$ is $ j! \stirb{n}{j})$. Glaisher provides four more variations of \eqref{psi1} in \cite[\S 100]{Gl} with, for example, the second and fourth being
\begin{align}
    \beta_m(\xi) & =  m \sum_{j=1}^{m}  \stirb{m-1}{j-1} (-\xi)^{j-1} \frac{(j-1)!}{(\xi - 1)^{j}}
, \qquad (\xi \neq 1, \ m \gqs 0), \label{gf2}\\
 & =  -m \sum_{j=0}^{m-2}  \stirb{m-2}{j} \xi^{j} \frac{j! (j+\xi)}{(1-\xi)^{j+2}}
, \qquad (\xi \neq 1, \ m \gqs 2). \label{gf4}
\end{align}
Apostol's result in \cite[Eq. (3.7)]{Ap}  is \eqref{gf2} and he used the coefficients $\beta_m(\xi)$ to describe the Lerch zeta function at negative integers. As discussed in \cite[Sect. 4]{Boy}, the Eulerian polynomials are very closely related to $\beta_m(\xi)$.

With \eqref{psi1}, \eqref{psi2} we see that $\beta_m(\xi)$ is in the  field $\Q(\xi)$ and  the next result then follows from Proposition \ref{ch}.
\begin{prop} We have
$C_{hk\ell}(N) \in \Q(e^{2\pi i h/k})$.
\end{prop}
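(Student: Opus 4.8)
The plan is to show $C_{hk\ell}(N)\in\Q(e^{2\pi i h/k})$ by reading off the formula \eqref{form1a} from Proposition \ref{ch} and verifying that every ingredient on its right-hand side lies in the field $\Q(\rho)$, where $\rho=e^{2\pi i h/k}$. Since $\Q(\rho)$ is a field, a finite sum of finite products of elements of $\Q(\rho)$ is again in $\Q(\rho)$, so it suffices to check the individual factors.

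First I would dispose of the rational factors. The prefactor $(-1)^N/N!$ is rational, the leading power $\rho^\ell$ lies in $\Q(\rho)$, the Nörlund values $B_{j_0}^{(1-\ell)}(1)$ are rational since $B_n^{(a)}(t)$ is a polynomial in $a,t$ with rational coefficients (as noted after its definition), and the combinatorial weights $1^{j_1}2^{j_2}\cdots N^{j_N}/(j_0!\,j_1!\cdots j_N!)$ are plainly rational. Hence the only factors requiring attention are the Apostol--Bernoulli coefficients $\beta_{j_r}(\rho^{r})$ for $1\le r\le N$.

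The \textbf{key step} is therefore to observe that each $\beta_m(\rho^{r})\in\Q(\rho)$. For this I would invoke the two closed forms just established: if $\rho^{r}\neq1$ then \eqref{psi1} expresses $\beta_m(\rho^{r})$ as a $\Q$-linear combination of the powers $(\rho^{r}-1)^{-j}$, each of which lies in $\Q(\rho)$ since $\rho^{r}\in\Q(\rho)$ and $\Q(\rho)$ is closed under the field operations; if instead $\rho^{r}=1$ then \eqref{psi2} with $\xi=\rho^{r}$ writes $\beta_m(\rho^{r})$ as a combination of $\xi^{j}B_m(j/k)$, and since $B_m(j/k)\in\Q$ and $\xi=\rho^{r}\in\Q(\rho)$ these terms are in $\Q(\rho)$ as well. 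Either way $\beta_{j_r}(\rho^{r})\in\Q(\rho)$, and feeding this back into \eqref{form1a} gives $C_{hk\ell}(N)\in\Q(\rho)$.

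I do not expect a genuine obstacle here, since the arguments $\rho^{r}$ are all roots of unity lying in $\Q(\rho)$ and the two propositions have already pinned down $\beta_m$ precisely on both the $\xi\neq1$ and the $\xi=1$ ranges. The only point deserving a sentence of care is the dichotomy itself: for a fixed $N$, as $r$ ranges over $1,\dots,N$ some powers $\rho^{r}$ may equal $1$ (namely when $k\mid r$) while others do not, so the formula \eqref{psi1} and \eqref{psi2} must be applied selectively factor by factor. Once that bookkeeping is acknowledged, the membership in $\Q(\rho)$ is immediate from the field axioms.
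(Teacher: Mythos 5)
Your proposal is correct and follows essentially the same route as the paper, which likewise deduces $\beta_m(\xi)\in\Q(\xi)$ from \eqref{psi1} and \eqref{psi2} and then concludes via Proposition \ref{ch}. You simply spell out the details (the rationality of the other factors and the $k\mid r$ versus $k\nmid r$ dichotomy) that the paper leaves implicit in its one-sentence argument.
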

Thus we only expect $C_{hk\ell}(N)$ to be rational when $h/k=0/1$ or $1/2$. We have already seen  rational expressions for $C_{01\ell}(N)$ in
Section \ref{tw}. To write $C_{12\ell}(N)$ explicitly, use \eqref{psi2}, \eqref{psi3} to get
\begin{alignat}{2}
\beta_m(-1) & = (2^m-1) B_m, \qquad & & (m \gqs 0) \label{psb}\\
     &=-\frac{m!}{(2\pi i)^m} (1+(-1)^m) \cdot \zeta(m,1/2), \qquad & & (m \gqs 2) \notag
\end{alignat}
where for \eqref{psb} we used the identity $B_m(1/2)=(2^{1-m}-1)B_m$ from \cite[Eq. (22.4.27)]{DLMF}.
Combining \eqref{psb} with \eqref{form1b}, for example, we find
\begin{multline*}
  C_{12\ell}(N) =  \frac{ (-1)^{N+\ell}(\ell-1)!}{ N!} \sum_{j_0+j_1+ \cdots + j_N = N-\ell}
    \stirb{\ell+j_0}{\ell}  \\
    \times \frac{ B_{j_1}B_{j_2} \cdots
B_{j_N}}{(\ell-1+j_0)!} \frac{ 1^{j_1} 2^{j_2} \cdots  N^{j_N}}{j_1 ! j_2 ! \cdots
j_N!} (2^{j_1}-1)(2^{j_3}-1) \cdots.
\end{multline*}

More generally, \eqref{form1b} and \eqref{psi2} allow us to write
\begin{multline} \label{iuy}
    C_{hk\ell}(N)  = \frac{(-1)^N \rho^{\ell}(\ell-1)!}{ N!} \sum_{j_0+j_1+ \cdots + j_N = N-\ell}
    \stirb{\ell+j_0}{\ell} \frac{k^{-\ell-j_0}}{(\ell-1+j_0)!} \frac{ 1^{j_1} 2^{j_2} \cdots  N^{j_N}}{ j_1 ! j_2 ! \cdots
j_N!} \\
\times \sum_{0 \lqs i_1, i_2, \dots, i_N \lqs k-1} \rho^{1 i_1+2 i_2+ \dots +N i_N} B_{j_1}(i_1/k) \cdots B_{j_N}(i_N/k).
\end{multline}

 \section{ Sylvester's waves} \label{sylvv}
Many of the formulas developed in Sections \ref{tw}, \ref{next} are  similar to classical ones of Sylvester and Glaisher. In this section we summarize part of their work on restricted partitions, highlighting the close connections between Rademacher's coefficients and Sylvester's waves.

For fixed $a_1$, $a_2, \dots ,a_N \in \Z_{\gqs 1}$ and each integer $n$,  the general restricted partition problem asks how many solutions there are to
\begin{equation*}
    a_1 x_1+ a_2 x_2 + \dots +a_N x_N =n \qquad (x_1, x_2, \dots , x_N \in \Z_{\gqs 0}).
\end{equation*}
Sylvester studied this number of solutions in  \cite{Sy1, Sy2, Sy3}, for example, calling it the {\em denumerant} (or {\em quotity})   of $n$ with respect to $a_1, \dots ,a_N$. For simplicity, we focus on the special case where $a_1=1$, $a_2=2, \dots ,a_N=N$ so that the denumerant is $p_N(n)$. It is straightforward to extend the results stated here back to the original results of Sylvester and Glaisher  for the general case.

With  $k$, $N \in \Z_{\gqs 1}$  and $n \in \Z$, Sylvester defined the {\em $k$-th wave} as
\begin{equation} \label{wave}
    W_k(N,n):=\res_{z=0} \sum_\rho \frac{\rho^n e^{n z}}{(1-\rho^{-1} e^{-z})(1-\rho^{-2} e^{-2z}) \cdots (1-\rho^{-N} e^{-Nz})}
\end{equation}
where the sum is over all primitive $k$-th roots of unity $\rho$.

\begin{theorem}[Sylvester's Theorem\footnote{The slightly involved early history of this theorem is described in \cite[pp. 119-121, 124, 125, 134, 135]{Di} and \cite[p. 277]{Gl} }]  \label{syth}
For $N$ and $n$ in $\Z_{\gqs 1}$, we have
\begin{equation*}
    p_N(n)=\sum_{k=1}^N W_k(N,n).
\end{equation*}
\end{theorem}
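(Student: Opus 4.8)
The plan is to realize $p_N(n)$ as a single residue at the origin and then redistribute it over the remaining poles of the generating function, using the fact that the residues of a rational function over the whole Riemann sphere sum to zero. Writing
$$
f(q):=\frac{1}{q^{n+1}}\prod_{j=1}^N\frac{1}{1-q^j},
$$
equation \eqref{eul} gives at once $p_N(n)=\res_{q=0}f(q)$, since $p_N(n)$ is the coefficient of $q^n$ in $\prod_{j=1}^N(1-q^j)^{-1}$. The poles of $f$ are $q=0$ together with the roots of unity $e^{2\pi i h/k}$; these are precisely the zeros of $1-q^j$ for some $j\lqs N$, i.e. the primitive $k$-th roots of unity with $1\lqs k\lqs N$, which matches the range of $k$ in the asserted sum.

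Next I would verify that $f$ has no residue at $q=\infty$. Setting $q=1/w$, one finds that $w^{-2}f(1/w)$ equals $\prod_{j=1}^N(w^j-1)^{-1}$, a function holomorphic and equal to $(-1)^N\neq0$ at $w=0$, times the power $w^{\,n-1+N(N+1)/2}$. For $n\gqs1$ this exponent is nonnegative, so $w=0$ is not a pole and $\res_{q=\infty}f=0$. The residue theorem on the sphere then forces the finite residues to cancel, so
$$
p_N(n)=\res_{q=0}f(q)=-\sum_{k=1}^N\ \sum_{\rho}\ \res_{q=\rho}f(q),
$$
the inner sum running over the primitive $k$-th roots of unity $\rho$.

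It remains to identify $-\sum_{\rho}\res_{q=\rho}f(q)$ with $W_k(N,n)$. For a fixed primitive $k$-th root $\rho$ I would apply Theorem \ref{rc}(i) with $g(z)=\rho e^{-z}$ (so $g(0)=\rho$ and $g'(0)=-\rho\neq0$), obtaining
$$
\res_{q=\rho}f(q)=\res_{z=0}f(\rho e^{-z})\cdot(-\rho e^{-z})=-\res_{z=0}\frac{\rho^{-n}e^{nz}}{\prod_{j=1}^N\bigl(1-\rho^{j}e^{-jz}\bigr)}.
$$
Summing over $\rho$ and then relabelling $\rho\mapsto\rho^{-1}$ (a bijection of the set of primitive $k$-th roots, under which $\rho^{-n}\mapsto\rho^{n}$ and $\rho^{j}\mapsto\rho^{-j}$) turns the right-hand side into $-W_k(N,n)$ by the definition \eqref{wave}. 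Summing over $k$ then yields $p_N(n)=\sum_{k=1}^N W_k(N,n)$.

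The genuinely delicate part is not the contour argument, which is routine, but the bookkeeping of signs and substitutions needed to match the precise shape of \eqref{wave}: I must correctly track the sign from $g'(0)=-\rho$, the cancellation of one factor of $q$ against $q^{-n-1}$, and above all the inversion $\rho\mapsto\rho^{-1}$ that converts the natural exponents $\rho^{j}e^{-jz}$ arising from the substitution into the $\rho^{-j}e^{-jz}$ of Sylvester's definition. I would also double-check the edge case $k=1$, where $\rho=1$ is the single highest-order pole, to confirm that $W_1(N,n)$ reproduces the $q=1$ contribution and that the vanishing of $\res_{q=\infty}f$ used above is unaffected.
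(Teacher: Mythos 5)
Your argument is correct, but it takes a genuinely different route from the paper. The paper proves the theorem ``based on our previous work'': it starts from the expansion \eqref{eul2} of $p_N(n)$ in terms of the Rademacher coefficients, substitutes the residue formula \eqref{res0b} for $C_{hk\ell}(N)$, pulls the residue outside, and collapses the inner sum over $\ell$ via the negative binomial series $\sum_{\ell\gqs 1}(-1)^\ell\binom{\ell-1+n}{\ell-1}(e^z-1)^{\ell-1}=-e^{-(n+1)z}$, before finishing with the substitution $z\mapsto -z$, $\rho=e(h/k)^{-1}$. You instead bypass the partial fraction decomposition entirely: you realize $p_N(n)$ as $\res_{q=0}q^{-n-1}\prod_{j=1}^N(1-q^j)^{-1}$, check that the residue at infinity vanishes (your exponent count $n-1+N(N+1)/2\gqs 0$ is right, and the product $\prod_j(w^j-1)^{-1}$ is indeed holomorphic and nonzero at $w=0$), and invoke the global residue theorem on the sphere; the conversion of each local residue to the wave form via $g(z)=\rho e^{-z}$ and Theorem \ref{rc}(i), followed by the relabelling $\rho\mapsto\rho^{-1}$, matches \eqref{wave} exactly, including signs. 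What each approach buys: the paper's derivation makes transparent the link between the $C_{hk\ell}(N)$ and the waves (the explicit relations recorded at the end of Section \ref{sylvv} drop out of the same computation), whereas yours is self-contained, closer to the classical proof of Sylvester's theorem, and requires neither \eqref{tp} nor \eqref{eul2} --- only the generating function \eqref{eul} and residue composition. Both proofs ultimately rest on the same substitution $q=\rho e^{-z}$.
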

\begin{proof}
We may give a short proof based on our previous work. Write $e(z)$ for $e^{2\pi i z}$ and recall from \eqref{res0} that
\begin{equation}\label{res0b}
    C_{hk\ell}(N) = e(h/k)^\ell \operatornamewithlimits{Res}_{z=0}  \frac{e^z(e^z-1)^{\ell-1}}
{(1-e(h/k) e^z)(1-e(h/k)^2 e^{2z}) \cdots
(1-e(h/k)^N e^{N z})}.
\end{equation}
The Rademacher coefficients $C_{hk\ell}(N)$ were originally defined for $1 \lqs \ell \lqs \lfloor N/k \rfloor$, but we see that their definition may be extended by \eqref{res0b} to all integers $\ell > \lfloor N/k \rfloor$ and for these $\ell$s we have $C_{hk\ell}(N)=0$. Inserting \eqref{res0b} into \eqref{eul2} yields
\begin{align}
    p_N(n) & = \sum_{\substack{0\leqslant h<k \leqslant N \\ (h,k)=1}}
\sum_{\ell=1}^{\infty} \left(
\operatornamewithlimits{Res}_{z=0}   \frac{e(h/k)^\ell e^z(e^z-1)^{\ell-1}}
{(1-e(h/k) e^z) \cdots
(1-e(h/k)^N e^{N z})}
\right) (-1)^\ell \binom{\ell-1+n}{\ell-1} e(h/k)^{-\ell-n}\notag \\
 & =
 \operatornamewithlimits{Res}_{z=0} \sum_{\substack{0\leqslant h<k \leqslant N \\ (h,k)=1}}
 \frac{ e(h/k)^{-n} e^z}
{(1-e(h/k) e^z) \cdots
(1-e(h/k)^N e^{N z})}
   \sum_{\ell=1}^{\infty} (-1)^\ell \binom{\ell-1+n}{\ell-1} (e^z-1)^{\ell-1}. \label{inn}
\end{align}
With
\begin{equation*}
    (1+x)^{-m}=\sum_{j=0}^\infty \binom{-m}{j} x^j = \sum_{j=0}^\infty (-1)^j \binom{m-1+j}{m-1} x^j,
\end{equation*}
by the binomial theorem, the inner sum in \eqref{inn} is recognized as $-e^{-(n+1)z}$. Hence
\begin{equation*}
    p_N(n) = \operatornamewithlimits{Res}_{z=0} \sum_{\substack{0\leqslant h<k \leqslant N \\ (h,k)=1}}
 \frac{ -e(h/k)^{-n} e^{-nz}}
{(1-e(h/k) e^z) \cdots
(1-e(h/k)^N e^{N z})}.
\end{equation*}
The theorem follows on replacing $z$ by $-z$ and writing $\rho = e(h/k)^{-1}$.
\end{proof}

The wave $W_k(N,n)$ has period $k$ in the sense that it is given by a polynomial in $n$ that depends only on $n \bmod k$. In other words,
there exist $k$ polynomials
$$
Q_{k,0}(N,x), \ Q_{k,1}(N,x), \dots  ,Q_{k,k-1}(N,x)  \in \Q[x]
$$
so that $W_k(N,n)=Q_{k,n \bmod k}(N, n)$. The first wave $W_1(N,n)$ is the simplest, being just a polynomial in $n$.

\begin{prop}[Sylvester \cite{Sy3}] \label{swv}
For $N \in \Z_{\gqs 1}$ and $n \in \Z$
\begin{equation*}
    W_1(N,n) = \frac{(-1)^{N-1}}{ N!} \left[ \text{\rm coeff. of }z^{N-1} \right] \exp\left(
     -\left(n+\frac{N(N+1)}4 \right)z - \sum_{m=2}^{N-1} \frac{B_{m} \cdot s_{m}(N)}{m \cdot m!} z^m
     \right).
\end{equation*}
\end{prop}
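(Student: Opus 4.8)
The plan is to evaluate the residue in \eqref{wave} directly when $k=1$ and reduce it to the stated coefficient extraction using the same logarithmic device that produced \eqref{ezz}.

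First I would specialize \eqref{wave} to $k=1$: the only primitive first root of unity is $\rho=1$, so
\begin{equation*}
  W_1(N,n) = \res_{z=0} \frac{e^{nz}}{(1-e^{-z})(1-e^{-2z})\cdots(1-e^{-Nz})}.
\end{equation*}
Each factor $1-e^{-jz}$ vanishes to first order at $z=0$, giving a pole of order $N$. I would expose it by writing $\dfrac{1}{1-e^{-jz}}=\dfrac{1}{jz}\cdot\dfrac{jz}{1-e^{-jz}}$ and collecting $\prod_{j=1}^N (jz)^{-1}=(z^N N!)^{-1}$. Since each $\dfrac{jz}{1-e^{-jz}}\to 1$ as $z\to0$, the remaining product times $e^{nz}$ is holomorphic at the origin and the residue becomes
\begin{equation*}
  W_1(N,n)=\frac{1}{N!}\,\left[\text{coeff. of }z^{N-1}\right]\;e^{nz}\prod_{j=1}^N \frac{jz}{1-e^{-jz}}.
\end{equation*}

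Next I would take logarithms of the product. Using $\dfrac{jz}{1-e^{-jz}}=e^{jz}\cdot\dfrac{jz}{e^{jz}-1}$ together with \eqref{lg} (applied with argument $jz$) gives
\begin{equation*}
  \log\prod_{j=1}^N \frac{jz}{1-e^{-jz}}
  = s_1(N)\,z+\sum_{m=1}^\infty (-1)^{m-1}\frac{B_m\,s_m(N)}{m\cdot m!}\,z^m,
\end{equation*}
where $s_m(N)$ is as in \eqref{smn} and the term $s_1(N)z$ comes from $\sum_{j=1}^N jz$. I would then split off the $m=1$ summand: since $B_1=-1/2$ and $s_1(N)=N(N+1)/2$, it equals $-\tfrac{N(N+1)}{4}z$, which combines with $s_1(N)z=\tfrac{N(N+1)}{2}z$ to leave exactly $\tfrac{N(N+1)}{4}z$. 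Absorbing $e^{nz}$ then yields
\begin{equation*}
  e^{nz}\prod_{j=1}^N \frac{jz}{1-e^{-jz}}
  =\exp\!\left(\Bigl(n+\tfrac{N(N+1)}{4}\Bigr)z+\sum_{m=2}^\infty (-1)^{m-1}\frac{B_m\,s_m(N)}{m\cdot m!}\,z^m\right).
\end{equation*}

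Finally, to match the signs in the statement I would replace $z$ by $-z$ inside the coefficient extraction: this multiplies the coefficient of $z^{N-1}$ by $(-1)^{N-1}$ and sends each summand $(-1)^{m-1}z^m$ to $(-1)^{m-1}(-1)^m z^m=-z^m$, collapsing the series to $-\sum_{m\ge2}\frac{B_m s_m(N)}{m\cdot m!}z^m$ and negating the linear term. Terms with $m\ge N$ contribute only powers $z^m$ with $m\ge N>N-1$, so they cannot affect the coefficient of $z^{N-1}$ and the sum may be truncated at $m=N-1$, giving the stated formula. The one point needing care is exactly this sign bookkeeping—producing the overall $(-1)^{N-1}$ and the uniform minus signs—and the substitution $z\mapsto-z$ is what makes it clean. (Alternatively, one could substitute $z\mapsto-z$ at the very start and invoke $B_m=0$ for odd $m\ge3$ to collapse $(-1)^{m-1}$ to $-1$.) Everything else is the routine insertion of \eqref{lg} and the power sums $s_m(N)$.
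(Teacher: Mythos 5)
Your argument is correct, and it is essentially the specialization to $k=1$ of the paper's own proof: the paper derives Proposition \ref{swv} as the $k=1$ case of Theorem \ref{glai}, whose proof uses exactly your ingredients (factoring out the order-$N$ pole, the logarithmic expansion \eqref{lg}, and the substitution $z\mapsto -z$, which the paper performs at the outset rather than at the end). Your sign bookkeeping, the combination of the $m=1$ term with $s_1(N)z$ to produce $N(N+1)/4$, and the truncation of the exponent at $m=N-1$ all check out.
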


The proof  of a   result generalizing Proposition \ref{swv} is given below. Glaisher, in \cite[\S\S 19-30]{Gl}, gave  more direct expressions for $W_1$  using the Bernoulli polynomial expansions \eqref{bern} with arguments $t=0$, $1/2$, $1$  in \eqref{wave}. For example, with $t=1/2$, he found
\begin{equation*}
    W_1(N,n) = \frac{1}{ N!} \sum_{j_0+j_1+ \cdots + j_N = N-1}
    \left(n+\frac{N(N+1)}4 \right)^{j_0} B_{j_1}\left(\frac 12 \right)   \cdots  B_{j_N}\left(\frac 12 \right) \frac{ 1^{j_1}  \cdots
N^{j_N}}{ j_0 ! j_1 !  \cdots j_N!}.
\end{equation*}
In \cite[Theorem, p. 2]{Be1} the authors have essentially rediscovered the above  formulas of Sylvester and Glaisher  for the first wave $W_1$.

Glaisher supplies  detailed formulas for $W_2, \dots ,W_6$ in \cite[\S\S 31-76]{Gl}, including expressions in terms of circulators (periodic sequences of integers), see also \cite[Sect. 3]{An}.  In \cite[\S\S 88-100]{Gl} he treats the general wave of period $k$. We use the Apostol-Bernoulli coefficients $\beta_m(\rho)$ from \eqref{apb} to state his result.
For fixed $k$ and $0 \lqs r \lqs k-1$ set
\begin{equation*}
    s_{m,r}(N):= \sum_{\substack{1 \lqs j \lqs N , \ j \equiv r \bmod k}} j^m.
\end{equation*}

\begin{theorem}[Glaisher] \label{glai} For $k, N \in \Z_{\gqs 1}$, $s :=\lfloor N/k \rfloor$ and $n \in \Z$
\begin{multline*}
    W_k(N,n) = \sum_\rho \frac {(-1)^{N-1} \rho^{-n} }{ k^{s} \cdot s!  } \left(\prod_{ 1 \lqs w \lqs N, \ k \nmid w} \frac 1{\rho^w  -1} \right) \\
    \times
     \left[ \text{\rm coeff. of }z^{s-1} \right]
     \exp\left(-\left(n+\frac{N(N+1)}2 \right)z
     - \sum_{r=0}^{k-1} \sum_{m=1}^{s-1} \frac{ \beta_m(\rho^r) \cdot s_{m,r}(N)}{m \cdot m!} z^m \right).
\end{multline*}
\end{theorem}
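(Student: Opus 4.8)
The plan is to evaluate the residue \eqref{wave} defining $W_k(N,n)$ by the same extraction-of-coefficients technique used in Sections \ref{tw}--\ref{next}, and then to match the result against the stated exponential degree by degree. Before starting I would make two harmless substitutions to line up the notation with the target: replacing $z$ by $-z$ (which negates the residue, since the coefficient of $z^{-1}$ changes sign) and replacing $\rho$ by $\rho^{-1}$ (permissible because the set of primitive $k$-th roots of unity is closed under inversion, so the sum over $\rho$ is unchanged). After these two moves
\[
W_k(N,n) = -\res_{z=0}\sum_\rho \frac{\rho^{-n}e^{-nz}}{\prod_{j=1}^N\left(1-\rho^{j}e^{jz}\right)},
\]
and now the factor $\rho^{-n}$, the factors $1-\rho^{w}e^{wz}$ (whose values at $z=0$ produce the factors $\rho^w-1$ up to sign), and the Apostol--Bernoulli coefficients $\beta_m(\rho^r)$ all appear in the orientation demanded by the statement.

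Next I would isolate the pole. Among $j=1,\dots,N$ exactly the $s=\lfloor N/k\rfloor$ multiples $j=k,2k,\dots,sk$ satisfy $\rho^{j}=1$, so the corresponding factors $1-e^{kt z}$ ($1\le t\le s$) each vanish to first order at $z=0$ and account for the whole pole. Writing $1-e^{ktz}=-ktz\cdot\frac{e^{ktz}-1}{ktz}$ pulls out $(-1)^s z^s k^s s!$, so the residue equals $\frac{(-1)^{s}\rho^{-n}}{k^s s!}$ times the coefficient of $z^{s-1}$ in the remaining function, which is holomorphic and nonzero at $z=0$. I then factor the values at $z=0$ out of the $k\nmid w$ factors ($1\le w\le N$), producing $\prod_{k\nmid w}\frac1{1-\rho^w}=(-1)^{N-s}\prod_{k\nmid w}\frac1{\rho^w-1}$. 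Collecting the signs $(-1)\cdot(-1)^s\cdot(-1)^{N-s}=(-1)^{N-1}$ recovers exactly the prefactor of the statement, and leaves me to show that the coefficient of $z^{s-1}$ in
\[
F(z):=e^{-nz}\prod_{t=1}^{s}\frac{ktz}{e^{ktz}-1}\prod_{k\nmid w}\frac{1-\rho^w}{1-\rho^w e^{wz}}
\]
equals the coefficient of $z^{s-1}$ in the stated exponential $E(z)$.

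For this I would compute $\log F$. The factors with $k\mid w$ are handled by \eqref{lg}, giving the Bernoulli numbers $B_m=\beta_m(1)$ and the sums $s_{m,0}(N)=\sum_{t=1}^s(kt)^m$. For each factor with $k\nmid w$ I would establish the clean identity
\[
\log\frac{1-\xi}{1-\xi e^{wz}} = -wz-\sum_{m\ge1}\frac{\beta_m(\xi)}{m\cdot m!}(wz)^m \qquad (\xi^k=1,\ \xi\ne1),
\]
which follows from the definition \eqref{apb} after using the inversion relations $\beta_m(\xi^{-1})=(-1)^m\beta_m(\xi)$ for $m\ge2$ and $\beta_1(\xi^{-1})=-1-\beta_1(\xi)$; both are short consequences of \eqref{apb} together with the explicit value $\beta_1(\xi)=1/(\xi-1)$. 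Summing over $w$ and grouping by residue class $r$ then assembles the double sum $-\sum_{r=0}^{k-1}\sum_{m\ge1}\frac{\beta_m(\rho^r)s_{m,r}(N)}{m\cdot m!}z^m$, which already matches $E(z)$ in every degree $m\ge2$.

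The step I expect to be delicate, and where sign errors hide, is the coefficient of $z^1$: the explicit term $-\tfrac{N(N+1)}2\,z$ of the statement is present in no single factor but must be assembled from three separate sources---the linear part of $e^{-nz}$, the $m=1$ Bernoulli and Apostol--Bernoulli terms (which involve $\beta_1(1)=-\tfrac12$ and the special identity $\beta_1(\xi^{-1})=-1-\beta_1(\xi)$), and the $-wz$ pieces contributed by the $k\nmid w$ factors, which sum to $-z\sum_{k\nmid w}w=-z\bigl(\tfrac{N(N+1)}2-\tfrac{ks(s+1)}2\bigr)$. Here the quantity $\tfrac{ks(s+1)}2=s_{1,0}(N)$ must cancel exactly against the $m=1$ Bernoulli contribution, leaving the clean $-(n+\tfrac{N(N+1)}2)z$. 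Carrying out this cancellation shows $\log F=\log E$ through degree $s-1$, so their coefficients of $z^{s-1}$ agree and the theorem follows. As a check I would specialize to $k=1$, where $\rho=1$, $s=N$, the second product is empty, and the odd Bernoulli numbers vanish; the formula must then collapse to Sylvester's expression for $W_1(N,n)$ in Proposition \ref{swv}.
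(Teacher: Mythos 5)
Your proposal is correct and follows essentially the same route as the paper: substitute $z\mapsto -z$, $\rho\mapsto\rho^{-1}$ in \eqref{wave}, split the product into the $s$ singular factors with $k\mid w$ (yielding $z^{s}k^{s}s!$) and the factors with $k\nmid w$ (yielding $\prod(\rho^w-1)^{-1}$), then take logarithms using \eqref{lg} and \eqref{ana} and regroup by residue class mod $k$. The only cosmetic difference is that your key identity $\log\frac{1-\xi}{1-\xi e^{wz}}=-wz-\sum_{m\gqs1}\frac{\beta_m(\xi)}{m\cdot m!}(wz)^m$ is exactly the paper's \eqref{ana} (with $z\mapsto wz$), provable directly by differentiating both sides, so the inversion relations for $\beta_m(\xi^{-1})$ you invoke are not actually needed.
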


\begin{proof}
Write
\begin{equation*}
    W_k(N,n)=\sum_\rho \res_{z=0} (-1)^{N-1} \rho^{-n} e^{-n z} \prod_{1 \lqs w \lqs N} \frac 1{\rho^w e^{w z} -1}.
\end{equation*}
The product equals
\begin{equation*}
    \prod_{\substack{ 1 \lqs w \lqs N \\ k \mid w}} \frac 1{w z} \cdot
    \prod_{\substack{ 1 \lqs w \lqs N \\ k \mid w}} \frac {wz}{ e^{w z} -1} \cdot
    \prod_{\substack{ 1 \lqs w \lqs N \\ k \nmid w}} \frac 1{\rho^w  -1} \cdot
    \prod_{\substack{ 1 \lqs w \lqs N \\ k \nmid w}} \frac {\rho^w  -1}{\rho^w e^{w z} -1}
\end{equation*}
and
hence
\begin{equation} \label{edf}
    W_k(N,n)=\sum_\rho \frac {(-1)^{N-1} \rho^{-n} }{ k^{s} \cdot s!  } \prod_{\substack{ 1 \lqs w \lqs N \\ k \nmid w}} \frac 1{\rho^w  -1}
     \left[ \text{coeff. of }z^{s-1} \right] e^{-n z} \cdot \prod_{\substack{ 1 \lqs w \lqs N \\ k \mid w}} \frac {wz}{ e^{w z} -1} \cdot
    \prod_{\substack{ 1 \lqs w \lqs N \\ k \nmid w}} \frac {\rho^w  -1}{\rho^w e^{w z} -1} .
\end{equation}
With \eqref{lg} and its analog
\begin{equation}\label{ana}
    \log \left( \frac{\rho -1}{\rho e^z -1} \right) = -z -\sum_{m=1}^\infty \frac{\beta_m(\rho)}{m \cdot m!} z^m \qquad (\rho \neq 1)
\end{equation}
we can write the log of the part of \eqref{edf} depending on $z$ as
\begin{multline*}
    -nz+ \sum_{\substack{ 1 \lqs w \lqs N \\ k \mid w}} \log \left( \frac {wz}{ e^{w z} -1} \right)
    + \sum_{\substack{ 1 \lqs w \lqs N \\ k \nmid w}} \log \left( \frac {\rho^w  -1}{\rho^w e^{w z} -1} \right)\\
    =
    -nz -  \sum_{\substack{ 1 \lqs w \lqs N \\ k \mid w}} \sum_{m=1}^\infty \frac{(-1)^{m} B_m}{m \cdot m!} w^m z^m
    - \sum_{\substack{ 1 \lqs w \lqs N \\ k \nmid w}}  \left( w z + \sum_{m=1}^\infty \frac{(-1)^{m} \beta_m(\rho^w)}{m \cdot m!} w^m z^m \right)\\
    =
    -nz -   \sum_{m=1}^\infty \frac{(-1)^{m} B_m \cdot s_{m,0}(N)}{m \cdot m!}  z^m
    - \sum_{\substack{ 1 \lqs r \lqs k-1}}  \left( s_{1,r}(N) z + \sum_{m=1}^\infty \frac{(-1)^{m} \beta_m(\rho^r) \cdot s_{m,r}(N)}{m \cdot m!} z^m \right).
\end{multline*}
Regrouping with  \eqref{betmbm}
yields the theorem.
\end{proof}

The product in the statement of Theorem \ref{glai} may be simplified. We need a  lemma.

\begin{lemma}\label{rou}
Let $\rho$ be a primitive $k$-th root of unity. Then
$$
(1-\rho)(1-\rho^2) \cdots (1-\rho^{k-1})=k.
$$
\end{lemma}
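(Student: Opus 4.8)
The plan is to obtain the identity by recognizing the product as the value at $x=1$ of a cyclotomic-type polynomial. The key observation is that since $\rho$ is a \emph{primitive} $k$-th root of unity, the powers $1,\rho,\rho^2,\dots,\rho^{k-1}$ are precisely the $k$ distinct roots of $x^k-1$, giving the factorization $x^k-1=\prod_{j=0}^{k-1}(x-\rho^j)$.

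First I would isolate the factor corresponding to $j=0$, writing
$$
\frac{x^k-1}{x-1}=\prod_{j=1}^{k-1}(x-\rho^j).
$$
The left-hand side is the familiar finite geometric sum, so this becomes
$$
1+x+x^2+\cdots+x^{k-1}=\prod_{j=1}^{k-1}(x-\rho^j),
$$
an identity of polynomials valid for all $x$.

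The final step is simply to evaluate both sides at $x=1$: the left-hand side is a sum of $k$ ones, equal to $k$, while the right-hand side is exactly $(1-\rho)(1-\rho^2)\cdots(1-\rho^{k-1})$. This yields the claim.

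There is essentially no obstacle here; the only point requiring care is the appeal to primitivity, which guarantees that $\{\rho^j\}_{j=0}^{k-1}$ exhausts all $k$-th roots of unity (so that no factor is repeated and the product over $1\lqs j\lqs k-1$ omits precisely the root $1$). Everything else is the standard factorization of $x^k-1$ together with a substitution.
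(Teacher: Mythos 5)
Your proof is correct and rests on the same underlying fact as the paper's, namely that primitivity makes $1,\rho,\dots,\rho^{k-1}$ the full set of roots of $x^k-1$. The paper executes the final step slightly differently --- it translates to the polynomial $(x+1)^k-1$, whose roots are $\rho^j-1$, and reads the product off the constant term of $(x+1)^k-1$ divided by $x$ via Vieta's formulas --- but this is equivalent to your evaluation of $1+x+\cdots+x^{k-1}=\prod_{j=1}^{k-1}(x-\rho^j)$ at $x=1$ under the substitution $x\mapsto x+1$, and your version is if anything the more transparent of the two.
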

\begin{proof}
The zeros of the polynomial $p(x)=(x+1)^k-1$ are exactly $\rho^j-1$ for $0 \lqs j \lqs k-1$. Hence the zeros of
\begin{equation}\label{bp}
p(x)/x = x^{k-1}+\binom{k}{1}x^{k-2}+ \cdots +\binom{k}{k-2}x+\binom{k}{k-1}
\end{equation}
are $\rho^j-1$ for $1 \lqs j \lqs k-1$ and their product is $(-1)^{k-1}$ times the constant term in \eqref{bp}.
\end{proof}

It follows that
\begin{align}
    \prod_{\substack{ 1 \lqs w \lqs N \\ k \nmid w}} \frac 1{\rho^w  -1} & =
    \left( \prod_{w=1}^{k-1} \frac 1{\rho^w  -1} \right)^s \prod_{w=1}^{N-ks} \frac 1{\rho^w  -1} \notag \\
    & =
     \left(  \frac {(-1)^{k-1}}{k} \right)^s \frac{(-1)^{N-ks}}{\prod_{1 \lqs w \lqs N-ks} (1-\rho^w)  } \notag \\
    & =
    \frac {(-1)^{N+s}}{k^{s} \prod_{1 \lqs w \lqs N-ks} (1-\rho^w)  }. \label{poix}
\end{align}

Using \eqref{poix} in Theorem \ref{glai} and expanding allows us to write the $k$-th wave relatively transparently as
\begin{multline}
    W_k(N,n) = \sum_\rho \frac {(-1)^{s-1} \rho^{-n} }{k^{2s} \cdot s! \prod_{1 \lqs w \lqs N-ks} (1-\rho^w)  } \\
    \times
     \sum_{1j_1+2j_2+ \cdots + N j_{N} = s-1}
     \frac{1}{j_1! j_2! \cdots j_N!}
     \left(-n-\frac{N(N+1)}2 - \sum_{r=0}^{k-1}  \frac{ \beta_1(\rho^r) \cdot s_{1,r}(N)}{1 \cdot 1!}\right)^{j_1} \\
    \times
     \left(- \sum_{r=0}^{k-1}  \frac{ \beta_2(\rho^r) \cdot s_{2,r}(N)}{2 \cdot 2!} \right)^{j_{2}} \cdots
     \left(- \sum_{r=0}^{k-1}  \frac{ \beta_N(\rho^r) \cdot s_{N,r}(N)}{N \cdot N!} \right)^{j_{N}}. \label{wavek}
\end{multline}
Putting $k=1$ in \eqref{wavek} gives an expanded version of Proposition \ref{swv}.

We conclude this section by noting the following explicit relations, which the reader may readily verify:
\begin{align*}
   W_k(N,n) & = -\sum_{\substack{0\leqslant h<k \\ (h,k)=1}}
\sum_{\ell=1}^{\lfloor N/k \rfloor}  \binom{-n-1}{\ell-1} e(h/k)^{-\ell-n} C_{hk\ell}(N) \qquad (n \in \Z),\\
C_{01\ell}(N) & = \sum_{j=1}^\ell \binom{\ell -1}{j-1} (-1)^{j} W_1(N,-j), \\
C_{12\ell}(N) & = \sum_{j=1}^\ell \binom{\ell -1}{j-1} (-1)^{\ell} W_2(N,-j).
\end{align*}

\section{A recursive formula for $C_{hk\ell}(N)$} \label{s4}
Our aim in this section is to give a recursive form of \eqref{form1b} and \eqref{iuy}, useful for computations.
For $\rho = e^{2\pi i h/k}$ as before and $N,m \gqs 1$,  set
\begin{equation} \label{dhkl}
    D_{hk\ell}(N,m) :=  \sum_{j_0+j_1+ \cdots + j_N = m-\ell}
    \stirb{\ell+j_0}{\ell} \frac{ \beta_{j_1}(\rho)\beta_{j_2}(\rho^{2})  \cdots
\beta_{j_N}(\rho^{N})}{(\ell-1+j_0)!} \frac{ 1^{j_1} 2^{j_2} \cdots  N^{j_N}}{ j_1 ! j_2 ! \cdots
j_N!}.
\end{equation}
As a consequence of \eqref{form1b},
\begin{equation*}
    C_{hk\ell}(N) = \frac{(-1)^N (\ell-1)!}{ N!} \rho^{\ell} D_{hk\ell}(N,N).
\end{equation*}
We may write
\begin{equation} \label{dab}
    D_{hk\ell}(N,m) = \sum_{r=0}^{k-1} \rho^r E_{hk\ell}(N,m;r)
\end{equation}
for $E_{hk\ell}(N,m;r) \in \Q$ and we wish to find a recursive formula for these rational numbers.

First note that
\begin{equation}\label{stp0}
E_{hk\ell}(0,m;r) =  \delta_{0,r} \stirb{m}{\ell}\frac{1}{(m-1)!}
\end{equation}
follows from \eqref{dhkl}. Substituting formulas \eqref{dab}, \eqref{psi2} into both sides of the identity
$$
D_{hk\ell}(N,m) =\sum_{a=0}^{m-\ell} D_{hk\ell}(N-1,m-a) \frac{\beta_a(\rho^N) N^a}{a!} \qquad (N \gqs 1)
$$
and equating coefficients of $\rho^r$ produces:
\begin{equation} \label{stp1}
    E_{hk\ell}(N,m;r) = \sum_{a=0}^{m-\ell} N^a \sum_{j=0}^{k-1} E_{hk\ell}(N-1,m-a;(r-N j) \bmod k ) \frac{k^{a-1} }{a!} B_a(j/k).
\end{equation}
By induction, it is now clear from \eqref{stp0} and \eqref{stp1} that $E_{hk\ell}(N,m;r)$ is independent of $h$ and so we denote it just $E_{k\ell}(N,m;r)$.
We have proved the following.

\begin{theorem} \label{cpu}
For $N, m \gqs 1$ and $0\lqs r \lqs k-1$, define  recursively
\begin{align*}
    E_{k\ell}(0,m;r) & := \delta_{0,r} \stirb{m}{\ell}\frac{1}{(m-1)!},\\
    E_{k\ell}(N,m;r) & := \sum_{a=0}^{m-\ell} N^a \sum_{j=0}^{k-1} E_{k\ell}(N-1,m-a;(r- N j) \bmod k ) \frac{k^{a-1} }{a!} B_a(j/k) \qquad (N \gqs 1).
\end{align*}
With $\rho = e^{2\pi i h/k}$ we then have
\begin{equation} \label{ckk}
     C_{hk\ell}(N) = \frac{(-1)^N (\ell-1)!}{ N!} \sum_{r=0}^{k-1} \rho^{r+\ell} E_{k\ell}(N,N;r).
\end{equation}
\end{theorem}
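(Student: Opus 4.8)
The plan is to reduce the whole statement to the single algebraic identity
\begin{equation}\label{goal}
D_{hk\ell}(N,m) = \sum_{r=0}^{k-1}\rho^r E_{k\ell}(N,m;r),\qquad \rho=e^{2\pi i h/k},
\end{equation}
valid for all $N\gqs 0$ and $m\gqs\ell$, where $E_{k\ell}$ denotes the quantity defined by the recursion in the theorem. Once \eqref{goal} is in hand, the final formula \eqref{ckk} is immediate: specializing to $m=N$ and multiplying through by $(-1)^N(\ell-1)!\rho^\ell/N!$ converts the relation $C_{hk\ell}(N)=\frac{(-1)^N(\ell-1)!}{N!}\rho^\ell D_{hk\ell}(N,N)$, which is \eqref{form1b}, into \eqref{ckk}. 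Thus the theorem reduces to \eqref{goal}, which I would prove by induction on $N$.

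First I would record the two facts that drive the induction. The base case comes straight from \eqref{dhkl}: with $N=0$ the only surviving index is $j_0=m-\ell$, so $D_{hk\ell}(0,m)=\stirb{m}{\ell}/(m-1)!$, independent of $\rho$, which matches $\sum_r\rho^r E_{k\ell}(0,m;r)=\stirb{m}{\ell}/(m-1)!$. The inductive mechanism is the factorization of \eqref{dhkl} obtained by peeling off the last index $a:=j_N$ and extracting the factor $\beta_a(\rho^N)N^a/a!$; the remaining sum over $j_0+\cdots+j_{N-1}=m-\ell-a$ is exactly $D_{hk\ell}(N-1,m-a)$, giving
\begin{equation}\label{Drec}
D_{hk\ell}(N,m)=\sum_{a=0}^{m-\ell}\frac{\beta_a(\rho^N)\,N^a}{a!}\,D_{hk\ell}(N-1,m-a).
\end{equation}

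For the inductive step I would substitute into \eqref{Drec} both the inductive hypothesis $D_{hk\ell}(N-1,m-a)=\sum_{r'=0}^{k-1}\rho^{r'}E_{k\ell}(N-1,m-a;r')$ and the evaluation $\beta_a(\rho^N)=k^{a-1}\sum_{j=0}^{k-1}\rho^{Nj}B_a(j/k)$, which is \eqref{psi2} applied to the $k$-th root of unity $\rho^N$. The generic term of the resulting double sum carries a factor $\rho^{Nj+r'}$; since $\rho^k=1$, I would reduce the exponent modulo $k$ and collect all terms feeding a fixed power $\rho^r$, namely those with $r'\equiv r-Nj\pmod k$. The coefficient of $\rho^r$ is then precisely
$$\sum_{a=0}^{m-\ell}N^a\sum_{j=0}^{k-1}E_{k\ell}(N-1,m-a;(r-Nj)\bmod k)\,\frac{k^{a-1}}{a!}B_a(j/k),$$
which is the definition of $E_{k\ell}(N,m;r)$. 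This closes the induction and establishes \eqref{goal}.

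The one point needing care — and, in my view, the real content hiding behind the bookkeeping — is that \eqref{goal} must be \emph{verified} directly rather than having the $E$'s ``read off'' by equating coefficients of $\rho^r$ in some expansion of $D$. The powers $1,\rho,\dots,\rho^{k-1}$ of a primitive $k$-th root of unity are not linearly independent over $\Q$ (their span has dimension $\varphi(k)$, not $k$), so a decomposition of the form \eqref{goal} is highly non-unique and literal coefficient-matching is not a priori legitimate. The induction above avoids this: it defines $E_{k\ell}$ by the recursion and checks that the specific combination $\sum_r\rho^r E_{k\ell}$ reproduces $D_{hk\ell}$, never invoking independence. Equivalently, one may run the argument inside the group algebra $\Q[\Z/k\Z]$, in which $1,\rho,\dots,\rho^{k-1}$ genuinely form a basis and the evaluation to $\Q(\rho)$ is a ring homomorphism; the manifest absence of $h$ from both the base case and the recursion then shows that $E_{hk\ell}$ depends only on $k$, justifying the notation $E_{k\ell}$.
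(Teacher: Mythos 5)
Your proof is correct and follows essentially the same route as the paper: peel off the index $j_N$ in \eqref{dhkl} to get the recursion for $D_{hk\ell}(N,m)$, insert \eqref{psi2} for $\beta_a(\rho^N)$, and induct on $N$, with \eqref{form1b} giving the final formula. Your closing observation is a genuine refinement of the paper's phrasing: the paper says the recursion is obtained by ``equating coefficients of $\rho^r$'' in \eqref{dab}, which is not literally legitimate since $1,\rho,\dots,\rho^{k-1}$ are $\Q$-linearly dependent when $\varphi(k)<k$, whereas you correctly take the recursion as the \emph{definition} of $E_{k\ell}$ and verify directly that $\sum_r\rho^r E_{k\ell}(N,m;r)=D_{hk\ell}(N,m)$, which is all that \eqref{ckk} requires.
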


Theorem \ref{cpu} allows us to calculate $C_{hk\ell}(N)$ quickly, just using rational numbers until the final step. All the computations in the  next section were carried out by this method.

We also remark that Theorem \ref{cpu} allows us to easily sum $ C_{hk\ell}(N)$ over all $h$ prime to $k$. Recall from \cite[Eq. (3.1)]{IwKo}, for example, the identity
\begin{equation} \label{rsum}
    \sum_{\substack{0\lqs a <b \\ (a,b)=1}} e^{2\pi i m a/b} = \sum_{d | (m,b)} d \cdot \mu(b/d)
\end{equation}
where the left side of \eqref{rsum}  is  a {\em Ramanujan sum} and the right side has the M\"obius function $\mu$. Therefore \eqref{ckk} and \eqref{rsum} imply
\begin{equation*}
    \sum_{\substack{0\lqs h <k \\ (h,k)=1}}C_{hk\ell}(N) = \frac{(-1)^N (\ell-1)!}{ N!} \sum_{r=0}^{k-1}  E_{k\ell}(N,N;r) \sum_{d | (r+\ell,k)} d \cdot \mu(k/d),
\end{equation*}
a rational number.

\section{Rademacher's conjecture}\label{rcoj}
Rademacher  modified the method of Hardy and Ramanujan to find an exact formula in \cite[Eq. (4)]{Ra2} for $p(n)$, the number of  partitions of $n$. Also in this 1937 paper he  substituted his  formula back into
\begin{equation}\label{rde}
\sum_{n=0}^\infty p(n) q^n = \prod_{j=1}^\infty \frac{1}{1-q^j}
\end{equation}
to obtain in \cite[Eqs. (13), (14)]{Ra2} a decomposition of the right side of \eqref{rde} into partial fractions.
This is detailed in \cite[pp. 292 - 302]{Ra} and Rademacher finds
\begin{equation}\label{37}
\prod_{j=1}^\infty \frac{1}{1-q^j}=\sum_{\substack{0\leqslant h<k  \\ (h,k)=1}}
\sum_{\ell=1}^{\infty} \frac{C_{hk\ell}(\infty)}{(q-e^{2\pi ih/k})^\ell} \qquad(|q|<1)
\end{equation}
 with coefficients $C_{hk\ell}(\infty)$ given explicitly in \cite[Eq. (130.6)]{Ra}.
For example
$$
C_{011}(\infty) = -\frac 6{25} -\frac{12 \sqrt{3}}{125 \pi}, \qquad C_{121}(\infty)  = \frac{\sqrt{3}-3}{25} +\frac{12 (\sqrt{3}+3)}{125 \pi}.
$$

Comparing \eqref{37} with \eqref{tp}, he then proposed the following appealing conjecture, providing some limited numerical evidence with $N \lqs 5$.
\begin{conj}[Rademacher \cite{Ra}] \label{rademacher}
We have
$$\lim_{N \rightarrow \infty} C_{hk\ell}(N)=C_{hk\ell}(\infty).$$
\end{conj}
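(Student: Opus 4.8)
The plan is to work in the uniformizing variable of \eqref{key} and to separate the genuine pole of the finite product at $z=h/k$ from the cusp singularity of the infinite product that produces $C_{hk\ell}(\infty)$. Fix $h,k,\ell$, write $\rho=e^{2\pi i h/k}$, and recall from \eqref{key} that
\[
C_{hk\ell}(N) = 2\pi i \res_{z=h/k} \frac{e^{2\pi i z}\bigl(e^{2\pi i z}-\rho\bigr)^{\ell-1}}{\prod_{j=1}^N \bigl(1-e^{2\pi i jz}\bigr)}.
\]
Since $\gcd(h,k)=1$, the denominator factors that vanish at $z=h/k$ are exactly those with $k\mid j$, so the integrand has a genuine pole there of order $\lfloor N/k\rfloor$. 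Splitting the product over $k\mid j$ and $k\nmid j$, and writing $q=e^{2\pi i z}$ in the upper half plane, I would factor
\[
\prod_{j=1}^N \frac{1}{1-q^j} = \frac{1}{\prod_{j=1}^\infty(1-q^j)} \cdot \prod_{j=N+1}^\infty \bigl(1-q^j\bigr),
\]
recognising the first factor as $q^{1/24}/\eta(z)$ in terms of the Dedekind eta function, so that the whole problem is recast as comparing a residue built from $1/\eta$ against its truncation at $j=N$.

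The next step is to extract $C_{hk\ell}(\infty)$ from the same data. Rademacher's expansion \eqref{37} is precisely the decomposition produced by applying the modular transformation of $\eta$ at the cusp $z=h/k$ inside his Farey dissection in \cite{Ra2}; thus the singular behaviour of $1/\eta$ at $z=h/k$, expanded in the local variable $w=z-h/k$, carries the coefficients $C_{hk\ell}(\infty)$. I would therefore aim to prove the limit in the sharp form
\[
C_{hk\ell}(N) - C_{hk\ell}(\infty) = 2\pi i\,\res_{z=h/k} \frac{e^{2\pi i z}\bigl(e^{2\pi i z}-\rho\bigr)^{\ell-1}}{\prod_{j=1}^\infty(1-e^{2\pi i jz})}\Bigl(\textstyle\prod_{j>N}(1-e^{2\pi i jz})-1\Bigr) + o(1),
\]
so that everything is reduced to estimating the truncation factor $\prod_{j>N}(1-q^j)-1$ in a neighbourhood of the cusp. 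Concretely, one expands this factor using \eqref{lg} and the power sums $s_m(N)$, exactly as in the derivation of \eqref{wavek}, and tracks its effect on the Laurent coefficients of the $1/\eta$ part at $z=h/k$.

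The hard part, and the genuine obstacle, is that the order $\lfloor N/k\rfloor$ of the pole grows without bound while the object it is being compared against, $1/\eta$, has no pole at $z=h/k$ at all but only a cusp singularity. Consequently the residue defining $C_{hk\ell}(N)$ is an ever-higher coefficient extraction whose local structure does not stabilise as $N$ increases, and the modular asymptotics of $\eta$ near $z=h/k$ enter through a factor growing like $\exp(c/w)$ with a positive constant $c=c(k)$ as $w\to 0$ within the upper half plane. The entire argument must therefore show that the interference between this growing pole and the exponential cusp asymptotics, modulated by the truncation factor $\prod_{j>N}(1-q^j)$, leaves a convergent limit equal to $C_{hk\ell}(\infty)$. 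This delicate asymptotic balance is exactly what a careful analysis of $C_{011}(N)$ along these lines must resolve, and it is the step on which any proof of the conjecture ultimately turns; it is precisely the forthcoming work on the asymptotics of these coefficients announced in the introduction.
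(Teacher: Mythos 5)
You have set out to prove a statement that the paper does not prove and, in fact, presents as almost certainly \emph{false}. Conjecture \ref{rademacher} is stated by the paper only as Rademacher's open conjecture; Section \ref{rcoj} then assembles numerical data (Table \ref{evid}, Figures \ref{pfig}, \ref{qfig}) and the asymptotic Conjecture \ref{coonj1}, which gives $C_{011}(N)= \frac{e^{NU}}{N^2}\left(\alpha\sin(\beta+NV)+O(1/N)\right)$ with $U\approx 0.068>0$, so that $C_{011}(N)$ oscillates with exponentially growing amplitude and has no limit at all. The paper explicitly states that the forthcoming work \cite{OS} proves a version of this strong enough to \emph{disprove} Rademacher's conjecture. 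Your closing paragraph inverts this: the announced asymptotics of $C_{011}(N)$ are not the missing step of a proof of the limit, they are its refutation. So no completion of your argument along these lines can succeed, and the ``delicate asymptotic balance'' you defer is precisely where the claim breaks down --- the growing pole order $\lfloor N/k\rfloor$ interacting with the cusp singularity produces a saddle-point contribution governed by a dilogarithm zero $w_0$ with $|w_0|<1$, which forces divergence rather than convergence.

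There are also concrete technical failures in the setup itself. The function $1/\prod_{j=1}^\infty(1-e^{2\pi i jz})$ is not meromorphic at $z=h/k$: every rational point is an essential (cusp) singularity and the unit circle $|q|=1$ is a natural boundary, so the expression $\res_{z=h/k}$ applied to the $1/\eta$ factor in your proposed sharp form is undefined, as is the analytic continuation of the tail $\prod_{j>N}(1-q^j)$ to a neighborhood of $z=h/k$. Relatedly, the coefficients $C_{hk\ell}(\infty)$ in \eqref{37} are not Laurent coefficients of $1/\eta$ in the local variable $w=z-h/k$ (there is no Laurent expansion there); Rademacher obtained them in \cite{Ra2} by substituting his convergent exact formula for $p(n)$ into the generating function, not by any residue extraction at the cusp. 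The only residue that makes sense is the one for the finite product, which is exactly \eqref{key}, and comparing it with $C_{hk\ell}(\infty)$ requires the genuinely different saddle-point analysis described in Section \ref{rcoj}, whose outcome contradicts the statement you are trying to prove.
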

Andrews redrew attention to Conjecture \ref{rademacher} in \cite{An} and subsequently  Munagi \cite{Mu2,Mu} and Davidson and Gagola in \cite{DG} considered the problem, though without making headway on the original conjecture. In \cite{DG} they calculated $C_{011}(N)$ with $N \leqslant 45$. These values showed oscillation and the
authors seem unconvinced that the sequence is converging. Using a new recursive technique  (see Corollary \ref{zco}), Sills and Zeilberger in \cite{SZ} were able to compute a much larger range of $C_{011}(N)$, showing clearly that this sequence looks to be  oscillating with period close to 32 and with amplitude growing exponentially.

In work currently being completed in \cite{OS}, we extend the techniques of Sections \ref{tw} -- \ref{s4} and employ the saddle-point method to obtain precise asymptotics for   $C_{011}(N)$ as $N \to \infty$. This shows an interesting  link with the zeros of the dilogarithm that we describe next.

Recall that the dilogarithm $\li(z)$ is initially defined as
$$
\li(z):=\sum_{m=1}^\infty \frac{z^m}{m^2} \qquad (|z|\lqs 1).
$$
It  has an analytic continuation to all $z \in \C$ by means of
\begin{equation}\label{lii}
\li(z) = -\int_0^z \log(1-u) \frac{du}{u}
\end{equation}
with a branch point at $z=1$. It may be shown that
$$
\li(w)+2\pi i \log(w) = 0
$$
 has a unique solution for $w \in \C$  given by
 $w_0 \approx 0.916198 + 0.182459 i$. In fact $w_0$ is a zero of the dilogarithm on a non-principal branch because, as the contour of integration in \eqref{lii} passes down across the branch cut $[1,\infty)$, the term $2\pi i \log(z)$ gets added to the principal value, as in \cite[Sect. 3(b)]{max}. It is convenient to set
$
z_0:=\log(1-w_0)/(-2\pi i)+1$
so that
$$
w_0=1-e^{-2\pi i z_0}, \quad 1<\Re(z_0)<2.
$$

\begin{conj} \label{coonj1}
We have
\begin{equation}\label{cj1}
   C_{011}(N)=\Re\left[(-2  z_0 e^{\pi i z_0})\frac{w_0^{-N}}{N^2}\right] +O\left( \frac{|w_0|^{-N}}{N^3}\right).
\end{equation}
\end{conj}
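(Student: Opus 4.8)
The plan is to combine the residue formula for $C_{011}(N)$ with the saddle-point method, using the dilogarithm to capture the bulk of the product in the denominator. Setting $\ell=1$, $h/k=0/1$ in \eqref{key} gives
$$C_{011}(N)=\res_{z=0}\frac{2\pi i\, e^{2\pi i z}}{\prod_{j=1}^N(1-e^{2\pi i jz})}=\frac{1}{2\pi i}\oint_{|z|=\epsilon}\frac{2\pi i\, e^{2\pi i z}}{\prod_{j=1}^N(1-e^{2\pi i jz})}\,dz.$$
The order-$N$ pole at $z=0$ signals a hidden large parameter. First I would rescale $z=x/N$, turning the small circle into a large contour in the $x$-plane and writing the integrand as $e^{2\pi i x/N}\exp(-\Phi_N(x/N))$, where $\Phi_N(z):=\sum_{j=1}^N\log(1-e^{2\pi i jz})$.

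The heart of the argument is an asymptotic expansion of $\Phi_N(x/N)$. I would split off the singular low-order behaviour via $\log(1-e^{2\pi i jx/N})=\log(-2\pi i xj/N)+\log\frac{1-e^{2\pi i jx/N}}{-2\pi i xj/N}$; summing the first piece gives $N\log(-2\pi i x)+\log N!$, whose Stirling expansion supplies the crucial factor $\tfrac12\log(2\pi N)$, while Euler--Maclaurin applied to the smooth remainder produces the dilogarithm through $\frac{d}{du}\li(e^u)=-\log(1-e^u)$ and $\li(1)=\pi^2/6$. The outcome, uniformly on the relevant contour, is
$$\Phi_N(x/N)=\frac{-N}{2\pi i x}\Bigl(\li(e^{2\pi i x})-\frac{\pi^2}{6}\Bigr)+\frac12\log(2\pi N)+\frac12\log\frac{1-e^{2\pi i x}}{-2\pi i x}+O(1/N),$$
so that $C_{011}(N)=\frac{1}{N\sqrt{2\pi N}}\oint e^{NS(x)}A(x)\bigl(1+O(1/N)\bigr)\,dx$ with $S(x)=\frac{1}{2\pi i x}\bigl(\li(e^{2\pi i x})-\pi^2/6\bigr)$ and $A(x)=\bigl(\tfrac{-2\pi i x}{1-e^{2\pi i x}}\bigr)^{1/2}$.

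Next I would locate the saddle. The equation $S'(x)=0$ simplifies to $\li(e^{2\pi i x})+2\pi i x\log(1-e^{2\pi i x})=\pi^2/6$; applying the reflection formula $\li(w)+\li(1-w)=\pi^2/6-\log w\log(1-w)$ with $w=e^{2\pi i x}$ and carefully tracking the branches of $\log$ and $\li$ (the $+2\pi i\log$ correction described before Conjecture \ref{coonj1}) turns this into $\li(w_0)+2\pi i\log(w_0)=0$ for $w_0=1-e^{2\pi i x_s}=1-e^{-2\pi i z_0}$, i.e. $x_s=-z_0$ — exactly the defining relation for $z_0$ and $w_0$. At the saddle one finds $S(x_s)=-\log w_0$, so $e^{NS(x_s)}=w_0^{-N}$, and the Laplace estimate $\oint e^{NS}A\,dx\sim e^{NS(x_s)}A(x_s)\sqrt{2\pi/(-NS''(x_s))}$ furnishes a further $N^{-1/2}$. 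Collecting the prefactors $N^{-1}$ (rescaling), $N^{-1/2}$ (from $\tfrac12\log(2\pi N)$) and $N^{-1/2}$ (the Gaussian) yields order $N^{-2}$. Since $C_{011}(N)\in\R$, the conjugate saddle $\bar x_s$ contributes the complex-conjugate term, and adding both produces the factor $2\Re[\,\cdot\,]$; simplifying $A(x_s)/\sqrt{-S''(x_s)}$ should collapse to $-z_0e^{\pi i z_0}$, giving the stated main term $\Re[(-2z_0e^{\pi i z_0})w_0^{-N}/N^2]$.

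The main obstacle is making the deformation and the expansion rigorous and \emph{uniform}. I must (a) justify pushing the initial circle to a steepest-descent contour through $x_s,\bar x_s$ past the dense set of poles $z=p/j$ near the real axis (i.e. $x=pN/j$) without incurring anything of size $|w_0|^{-N}$, which requires showing $\Re S$ stays strictly below $-\log|w_0|$ away from the saddles; and (b) control the $O(1/N)$ error in the expansion of $\Phi_N$ along the \emph{entire} contour, not just near the saddle, so that it integrates to the claimed $O(|w_0|^{-N}/N^3)$. Pinning down the exact constant $-2z_0e^{\pi i z_0}$, together with the correct branches of the square roots and of $\li$, is the delicate bookkeeping that completes the proof.
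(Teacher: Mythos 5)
The first thing to say is that the paper does not prove this statement: Conjecture \ref{coonj1} is stated precisely as a conjecture, supported only by the numerics in Table \ref{evid} and Figures \ref{pfig}, \ref{qfig}, with the proof of a (slightly weaker) version deferred to the forthcoming work \cite{OS}. So there is no proof in the paper to compare yours against. Your outline is consistent with the strategy the paper announces for \cite{OS} --- extend the residue formulas of Sections \ref{tw}--\ref{s4} and apply the saddle-point method --- and it correctly identifies the mechanism: a saddle of $\li(e^{2\pi i x})/(2\pi i x)$ located at $x=-z_0$, the reflection/branch bookkeeping that turns the saddle equation into $\li(w_0)+2\pi i\log w_0=0$, the value $e^{NS(x_s)}=w_0^{-N}$ producing the exponential growth, the three factors $N^{-1}$, $N^{-1/2}$, $N^{-1/2}$ combining to $N^{-2}$, and the conjugate saddle doubling the real part. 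That matches everything the paper says about where $w_0$ and $z_0$ come from.

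However, the proposal is an outline rather than a proof, and the two gaps you flag at the end are exactly the substance of the problem; the paper itself says the ``chief difficulty comes from estimating the error terms and showing they are indeed less than the expected main terms.'' Concretely: (a) your initial circle must have radius less than $1/N$ (i.e.\ $|x|<1$ after rescaling), while the saddle $x_s=-z_0$ has $|\Re(x_s)|>1$, so the deformation necessarily crosses poles at $x=Np/j$; the residues picked up there are not generic error terms --- up to normalization they are the other Rademacher coefficients $C_{hk\ell}(N)$, whose sizes are precisely what is being determined, so bounding their total by $o\left(|w_0|^{-N}/N^2\right)$ cannot be waved through and risks circularity as stated. (b) The Euler--Maclaurin expansion of $\Phi_N(x/N)$ degenerates as $x$ approaches any of these poles, so the claimed uniformity ``on the relevant contour'' is not available for free, and without it the $O(1/N)$ error does not integrate to the asserted $O\left(|w_0|^{-N}/N^3\right)$. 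Until (a) and (b) are carried out, the statement remains a conjecture --- which is exactly how the paper presents it.
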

Presenting \eqref{cj1} with real numbers, we can equivalently write
\begin{equation} \label{cj1real}
    C_{011}(N)= \frac{ e^{N U}}{N^2}\left( \alpha \sin(\beta +N V) +O\left( \frac 1{N}\right)\right)
\end{equation}
for
\begin{equation*}
    \alpha \approx 5.39532, \quad \beta \approx 1.92792, \quad  U \approx 0.0680762, \quad V \approx -0.196576.
\end{equation*}
Thus, Conjecture \ref{coonj1} gives an exact version of  \cite[Conjecture 2.1]{SZ}. (In fact \eqref{cj1} and \eqref{cj1real} can be made even more  precise with a complete asymptotic expansion.) The period of the oscillations on the right of \eqref{cj1}, \eqref{cj1real} is $-2 \pi/V \approx 31.9631$.

\begin{conj} \label{coonj2}
We have
\begin{equation} \label{cj2}
    C_{121}(N)=\Re\left[
    -z_0 \sqrt{2 e^{\pi i z_0} \left(e^{\pi i z_0} + (-1)^N \right)} \frac{w_0^{-N/2}}{N^2}
    \right] +O\left( \frac{|w_0|^{-N/2}}{N^3}\right).
\end{equation}
\end{conj}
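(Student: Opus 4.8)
The plan is to run the saddle-point method on an exact contour-integral representation of $C_{121}(N)$, converting the finite product into a dilogarithm so that the zero $w_0$ of \eqref{lii} emerges as the dominant saddle, exactly as in the analysis of $C_{011}(N)$ behind Conjecture \ref{coonj1}. Setting $\rho=-1$ and $\ell=1$ in \eqref{res0b} gives the starting point
\begin{equation*}
    C_{121}(N) = -\frac{1}{2\pi i}\oint_{|z|=\delta} \frac{e^z\,dz}{\prod_{j=1}^N\left(1-(-1)^je^{jz}\right)},
\end{equation*}
the circle enclosing only the pole at $z=0$. The key structural move is to split the product by the parity of $j$: the even factors $\prod_{2\mid j}(1-e^{jz})^{-1}$ carry the pole, of order $\lfloor N/2\rfloor$, while the odd factors $\prod_{2\nmid j}(1+e^{jz})^{-1}$ are holomorphic at the origin. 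It is precisely this split that leaves the exponent to be supplied by only $\approx N/2$ singular factors, and hence produces $w_0^{-N/2}$ in place of the $w_0^{-N}$ of \eqref{cj1}.

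Next I would rescale $w=Nz$ and convert the two logarithmic sums into integrals by Euler--Maclaurin. Writing each as a Riemann sum of spacing $2/N$ and using \eqref{lii} in the forms $\int_0^w\log(1-e^u)\,du=\tfrac{\pi^2}{6}-\li(e^w)$ and $\int_0^w\log(1+e^u)\,du=-\li(-e^w)-\tfrac{\pi^2}{12}$, the integrand becomes $\exp\!\big(\tfrac N2\,\Xi(w)+O(1)\big)$ with
\begin{equation*}
    \Xi(w)=\frac1w\Big(\li(e^{w})+\li(-e^{w})-\tfrac{\pi^2}{12}\Big)=\frac1{2w}\Big(\li(e^{2w})-\tfrac{\pi^2}{6}\Big),
\end{equation*}
the collapse being the duplication formula $\li(x)+\li(-x)=\tfrac12\li(x^2)$. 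Differentiating and applying the reflection formula $\li(x)+\li(1-x)=\tfrac{\pi^2}{6}-\log x\log(1-x)$ at $x=e^{2w}$ reduces the saddle equation $\Xi'(w)=0$ to the strikingly clean condition $\li\!\big(1-e^{-v}\big)=0$ in the variable $v=-2w$. On the appropriate branch this is exactly the defining equation $\li(W)+2\pi i\log W=0$ of $w_0$, so the dominant saddle sits at $1-e^{-v_*}=w_0$, i.e. $v_*=2\pi i z_0$ and $w_*=-\pi i z_0$; the conjugate saddle $\overline{w_*}$ furnishes the complex conjugate, and adding the two contributions is what produces the $\Re[\,\cdot\,]$ in \eqref{cj2}.

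With the saddle pinned near $v_*=2\pi i z_0$, the remaining work is the Laplace evaluation. Once the branch of $\li$ along the descent contour is correctly accounted for, the value $\tfrac N2\,\Xi(w_*)$ produces the exponential rate $w_0^{-N/2}$; the Gaussian width $\big(\Xi''(w_*)\big)^{-1/2}$ together with the powers of $N$ from $dz=dw/N$ and from the order-$\lfloor N/2\rfloor$ pole yields the $1/N^2$; and the slowly varying prefactors — the numerator $e^z\to 1$, the $O(1)$ Euler--Maclaurin correction of the form $\tfrac12\log\tfrac{w}{e^w-1}$, and the odd product evaluated at the saddle — assemble into the amplitude $-z_0$ times the radical. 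The factor $e^{\pi i z_0}=e^{-w_*}$ is immediate from the saddle location, while the parity term $e^{\pi i z_0}+(-1)^N$ inside the square root reflects the exact pole order $\lfloor N/2\rfloor$ and odd-index count $\lceil N/2\rceil$, whose $N\bmod 2$ dependence shifts the endpoints of the two Riemann sums.

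The main obstacle is not locating the saddle but justifying the approximations uniformly. The poles of the integrand at $z=2\pi i p/j$ (that is, $w=2\pi i pN/j$) crowd toward the contour as $N\to\infty$, so one must deform to a genuine steepest-descent path threading between them and prove that the Euler--Maclaurin/dilogarithm representation is valid along that entire path, with the subdominant saddles and poles relegated to the $O\!\big(|w_0|^{-N/2}/N^3\big)$ remainder. Equally delicate is the branch bookkeeping: establishing that, as the contour is pushed off the small circle, it crosses the cut $[1,\infty)$ of $\li$ exactly once, so that the relevant value is the non-principal one and the exact constants (rate and amplitude) come out as claimed, and then handling the two parities of $N$ separately — including the regime where the radicand becomes small and the two dominant saddles nearly coalesce. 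Carrying out this uniform error analysis, rather than the formal saddle computation, is where the real content of Conjecture \ref{coonj2} lies.
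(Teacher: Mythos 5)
The first thing to note is that the paper offers no proof of this statement: it is presented as a conjecture, supported only by the numerical evidence in Table 2 and Figure 2 and by a pointer to work in preparation \cite{OS}, where even the simpler Conjecture \ref{coonj1} for $C_{011}(N)$ is described as only partially established (``a slightly weaker version\dots is being finished''). So there is no proof in the paper to compare your argument against. That said, your strategy is the one the author signals: a saddle-point analysis of the residue representation \eqref{res0b}, with the finite product converted into dilogarithm sums so that the non-principal zero $w_0$ of $\li(w)+2\pi i \log w$ emerges as the dominant saddle. Your structural observations are sound and do explain the shape of \eqref{cj2}: splitting the product over the parity of $j$ correctly isolates a pole of order $\lfloor N/2\rfloor$ coming from the even factors (whence $w_0^{-N/2}$ in place of the $w_0^{-N}$ of \eqref{cj1}), and the duplication and reflection formulas for $\li$ do collapse the formal saddle equation onto the defining equation of $w_0$.

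However, as a proof the proposal has a genuine gap, and you name it yourself in your final paragraph: everything that would make the argument rigorous is deferred. The Euler--Maclaurin replacement of the two logarithmic sums by dilogarithms must be justified uniformly along a steepest-descent contour that threads between poles at $z=2\pi i p/j$ accumulating at the origin; the branch of $\li$ must be tracked as the contour crosses $[1,\infty)$ so that the non-principal zero is the relevant one; and the contributions of all subdominant saddles and crossed poles must be shown to be $O\bigl(|w_0|^{-N/2}/N^3\bigr)$. The paper itself states that ``the chief difficulty comes from estimating the error terms and showing they are indeed less than the expected main terms,'' and that is precisely the part your outline does not supply. Moreover, the amplitude $-z_0\sqrt{2e^{\pi i z_0}(e^{\pi i z_0}+(-1)^N)}$ is only asserted to ``assemble'' from the prefactors, with no computation confirming the constant, and the near-coalescence of the two dominant saddles when the radicand is small (which you flag) would need a uniform treatment that is not sketched. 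In short, this is a credible programme consistent with the author's announced approach, but it is not a proof of \eqref{cj2}, which remains a conjecture both in the paper and after your proposal.
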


The real number version of Conjecture \ref{coonj2} is the same as \eqref{cj1real} except that $U$ and $V$ are substituted by $U/2$ and $V/2$. Also $\alpha$ and $\beta$ are replaced by $\alpha' \approx 4.51129$, $\beta' \approx  -1.30059$ if $N$ is odd and replaced by $\alpha'' \approx 3.11832$, $\beta'' \approx  -1.02847$ if $N$ is even.

\begin{table}[h]
\begin{center}
\begin{tabular}{c||c|c||c|c}
$N$ & $C_{011}(N)$ & $A_{011}(N)$ & $C_{121}(N)$ & $A_{121}(N)$ \\ \hline
$200$ & $32.1168$ & $33.8689$ & $0.0253518 $ & $-0.0680541 $ \\
$400$ & $-2.16712 \times 10^7$ & $-2.17937 \times 10^7$ & $ -7.89072$ & $-7.60602 $ \\
$600$ & $-1.77255 \times 10^{12}$ & $-1.80284 \times 10^{12}$ & $1838.23 $ & $1963.12 $ \\
$800$ & $3.71444 \times 10^{18}$ & $3.72536 \times 10^{18}$ & $2.91228 \times 10^6 $ & $2.93686 \times 10^6 $\\
$1000$ & $2.54070 \times 10^{23}$ & $2.58000 \times 10^{23}$ & $1.77778  \times 10^9$ & $1.7713 \times 10^9 $
\end{tabular}
\caption{} \label{evid}
\end{center}
\end{table}

Numerical evidence for Conjectures \ref{coonj1}, \ref{coonj2} is shown in Table \ref{evid} with $A_{011}(N)$ and $A_{121}(N)$ denoting the main terms on the right  of \eqref{cj1}, \eqref{cj2}. Figures \ref{pfig} and \ref{qfig} contain more verifying data.



\SpecialCoor
\psset{griddots=5,subgriddiv=0,gridlabels=0pt}
\psset{xunit=0.05cm, yunit=0.4cm}
\psset{linewidth=1pt}
\psset{dotsize=2pt 0,dotstyle=*}

\begin{figure}[h]
\begin{center}
\begin{pspicture}(80,-7)(340,6) 

\savedata{\mydata}[
{{100, 4.85601}, {101, 4.30324}, {102, 3.58473}, {103, 2.72814}, {104,
   1.76646}, {105,
  0.736751}, {106, -0.321342}, {107, -1.36706}, {108, -2.36011},
{109, -3.26227}, {110, -4.03876}, {111, -4.65969}, {112, -5.10114},
{113, -5.34611}, {114, -5.38515}, {115, -5.21677}, {116, -4.84745},
{117, -4.29142}, {118, -3.57009}, {119, -2.71124}, {120, -1.74797},
{121, -0.717369}, {122, 0.340865}, {123, 1.38597}, {124,
  2.37769}, {125, 3.27782}, {126, 4.05171}, {127, 4.66952}, {128,
  5.10748}, {129, 5.34871}, {130, 5.38392}, {131, 5.21175}, {132,
  4.83883}, {133, 4.27953}, {134, 3.5554}, {135, 2.69432}, {136,
  1.72945}, {137,
  0.697978}, {138, -0.360383}, {139, -1.40486}, {140, -2.39523},
{141, -3.29334}, {142, -4.0646}, {143, -4.67929}, {144, -5.11375},
{145, -5.35124}, {146, -5.38261}, {147, -5.20665}, {148, -4.83015},
{149, -4.26759}, {150, -3.54066}, {151, -2.67735}, {152, -1.71092},
{153, -0.678578}, {154, 0.379897}, {155, 1.42374}, {156,
  2.41274}, {157, 3.30881}, {158, 4.07743}, {159, 4.689}, {160,
  5.11995}, {161, 5.3537}, {162, 5.38123}, {163, 5.20149}, {164,
  4.8214}, {165, 4.2556}, {166, 3.52588}, {167, 2.66035}, {168,
  1.69235}, {169,
  0.659169}, {170, -0.399406}, {171, -1.4426}, {172, -2.43022}, {173,
-3.32424}, {174, -4.09021}, {175, -4.69864}, {176, -5.12609}, {177,
-5.35609}, {178, -5.37979}, {179, -5.19626}, {180, -4.81259}, {181,
-4.24355}, {182, -3.51105}, {183, -2.64332}, {184, -1.67377}, {185,
-0.639751}, {186, 0.41891}, {187, 1.46144}, {188, 2.44767}, {189,
  3.33962}, {190, 4.10294}, {191, 4.70823}, {192, 5.13216}, {193,
  5.35841}, {194, 5.37827}, {195, 5.19096}, {196, 4.80372}, {197,
  4.23144}, {198, 3.49618}, {199, 2.62625}, {200, 1.65516}, {201,
  0.620325}, {202, -0.438408}, {203, -1.48025}, {204, -2.46508},
{205, -3.35496}, {206, -4.11562}, {207, -4.71775}, {208, -5.13816},
{209, -5.36066}, {210, -5.37668}, {211, -5.1856}, {212, -4.79478},
{213, -4.21928}, {214, -3.48126}, {215, -2.60915}, {216, -1.63654},
{217, -0.60089}, {218, 0.4579}, {219, 1.49905}, {220, 2.48247}, {221,
  3.37026}, {222, 4.12824}, {223, 4.72721}, {224, 5.14409}, {225,
  5.36284}, {226, 5.37502}, {227, 5.18016}, {228, 4.78578}, {229,
  4.20706}, {230, 3.46629}, {231, 2.59201}, {232, 1.61789}, {233,
  0.581448}, {234, -0.477387}, {235, -1.51783}, {236, -2.49982},
{237, -3.38551}, {238, -4.14081}, {239, -4.7366}, {240, -5.14996},
{241, -5.36494}, {242, -5.37329}, {243, -5.17466}, {244, -4.77672},
{245, -4.19479}, {246, -3.45128}, {247, -2.57484}, {248, -1.59922},
{249, -0.561998}, {250, 0.496867}, {251, 1.53659}, {252,
  2.51713}, {253, 3.40072}, {254, 4.15332}, {255, 4.74594}, {256,
  5.15575}, {257, 5.36698}, {258, 5.37148}, {259, 5.16909}, {260,
  4.76759}, {261, 4.18246}, {262, 3.43622}, {263, 2.55763}, {264,
  1.58052}, {265,
  0.542541}, {266, -0.516341}, {267, -1.55533}, {268, -2.53442},
{269, -3.41588}, {270, -4.16578}, {271, -4.75521}, {272, -5.16148},
{273, -5.36895}, {274, -5.36961}, {275, -5.16345}, {276, -4.7584},
{277, -4.17007}, {278, -3.42112}, {279, -2.54039}, {280, -1.56181},
{281, -0.523076}, {282, 0.535807}, {283, 1.57405}, {284,
  2.55167}, {285, 3.431}, {286, 4.17818}, {287, 4.76442}, {288,
  5.16715}, {289, 5.37085}, {290, 5.36767}, {291, 5.15774}, {292,
  4.74915}, {293, 4.15763}, {294, 3.40597}, {295, 2.52312}, {296,
  1.54308}, {297,
  0.503605}, {298, -0.555267}, {299, -1.59275}, {300, -2.56889}}]
\dataplot[linecolor=red,linewidth=0.8pt,plotstyle=curve]{\mydata}

\savedata{\mydata}[
{{100, 1.42646}, {101, 1.04293}, {102,
  0.493117}, {103, -0.195683}, {104, -0.991078}, {105, -1.85682},
{106, -2.75419}, {107, -3.64348}, {108, -4.48552}, {109, -5.24318},
{110, -5.88277}, {111, -6.37536}, {112, -6.6979}, {113, -6.83405},
{114, -6.77487}, {115, -6.5191}, {116, -6.07325}, {117, -5.45132},
{118, -4.67428}, {119, -3.76922}, {120, -2.76835}, {121, -1.7077},
{122, -0.625773}, {123, 0.437961}, {124, 1.4446}, {125,
  2.35731}, {126, 3.14274}, {127, 3.77235}, {128, 4.22346}, {129,
  4.48021}, {130, 4.53409}, {131, 4.38436}, {132, 4.03805}, {133,
  3.50966}, {134, 2.8207}, {135, 1.99879}, {136, 1.07662}, {137,
  0.0907375}, {138, -0.919915}, {139, -1.91545}, {140, -2.85659},
{141, -3.70618}, {142, -4.43059}, {143, -5.00105}, {144, -5.39472},
{145, -5.59562}, {146, -5.59517}, {147, -5.3926}, {148, -4.99494},
{149, -4.41676}, {150, -3.67962}, {151, -2.81121}, {152, -1.84434},
{153, -0.815614}, {154, 0.235917}, {155, 1.2703}, {156,
  2.24821}, {157, 3.13246}, {158, 3.88941}, {159, 4.49033}, {160,
  4.91243}, {161, 5.1398}, {162, 5.16398}, {163, 4.98433}, {164,
  4.60803}, {165, 4.04981}, {166, 3.33139}, {167, 2.48065}, {168,
  1.53055}, {169,
  0.51787}, {170, -0.518209}, {171, -1.53761}, {172, -2.5009}, {173,
-3.37081}, {174, -4.11368}, {175, -4.70074}, {176, -5.1092}, {177,
-5.32319}, {178, -5.33432}, {179, -5.142}, {180, -4.7535}, {181,
-4.18364}, {182, -3.45425}, {183, -2.59331}, {184, -1.63384}, {185,
-0.612723}, {186, 0.430817}, {187, 1.45666}, {188, 2.42537}, {189,
  3.29967}, {190, 4.04596}, {191, 4.63551}, {192, 5.04565}, {193,
  5.2606}, {194, 5.27208}, {195, 5.07967}, {196, 4.69078}, {197,
  4.12038}, {198, 3.39046}, {199, 2.52911}, {200, 1.56954}, {201,
  0.548697}, {202, -0.494078}, {203, -1.51861}, {204, -2.4854}, {205,
-3.35719}, {206, -4.10036}, {207, -4.68625}, {208, -5.09225}, {209,
-5.30266}, {210, -5.30932}, {211, -5.11193}, {212, -4.71802}, {213,
-4.14272}, {214, -3.40813}, {215, -2.54249}, {216, -1.57909}, {217,
-0.555}, {218, 0.490375}, {219, 1.5168}, {220, 2.48476}, {221,
  3.357}, {222, 4.09992}, {223, 4.68492}, {224, 5.08946}, {225,
  5.29794}, {226, 5.30233}, {227, 5.10244}, {228, 4.70595}, {229,
  4.12811}, {230, 3.39115}, {231, 2.52345}, {232, 1.5584}, {233,
  0.533154}, {234, -0.512804}, {235, -1.5392}, {236, -2.50649}, {237,
-3.37744}, {238, -4.11848}, {239, -4.70106}, {240, -5.10272}, {241,
-5.30799}, {242, -5.30892}, {243, -5.10547}, {244, -4.70544}, {245,
-4.12421}, {246, -3.38415}, {247, -2.51374}, {248, -1.54648}, {249,
-0.519606}, {250, 0.527338}, {251, 1.55404}, {252, 2.52097}, {253,
  3.39087}, {254, 4.13025}, {255, 4.71061}, {256, 5.1096}, {257,
  5.31184}, {258, 5.30951}, {259, 5.10269}, {260, 4.69932}, {261,
  4.11494}, {262, 3.37201}, {263, 2.49914}, {264, 1.52994}, {265,
  0.501717}, {266, -0.545931}, {267, -1.57266}, {268, -2.53893},
{269, -3.40752}, {270, -4.14497}, {271, -4.72286}, {272, -5.11894},
{273, -5.31793}, {274, -5.31215}, {275, -5.1018}, {276, -4.69497},
{277, -4.10731}, {278, -3.36143}, {279, -2.48605}, {280, -1.51487},
{281, -0.485276}, {282, 0.563081}, {283, 1.58983}, {284,
  2.55543}, {285, 3.42269}, {286, 4.15819}, {287, 4.73362}, {288,
  5.12679}, {289, 5.32255}, {290, 5.31335}, {291, 5.09952}, {292,
  4.68928}, {293, 4.09843}, {294, 3.3497}, {295, 2.47191}, {296,
  1.49887}, {297,
  0.468037}, {298, -0.580894}, {299, -1.60753}, {300, -2.57232}}
]
\dataplot[linecolor=black,linewidth=0.8pt,plotstyle=dots]{\mydata}
\psaxes[linecolor=gray,Ox=100,Oy=0,Dx=50,dx=50,Dy=2,dy=2]{->}(100,0)(80,-7)(320,6)

  \rput(320,-1){$N$}
  \rput(320,5){$\alpha \sin(\beta+NV)$}

\end{pspicture}
\caption{$C_{011}(N)*N^2 e^{-NU}$ for $100 \leqslant N \leqslant 300$ \label{pfig}}
\end{center}
\end{figure}




\SpecialCoor
\psset{griddots=5,subgriddiv=0,gridlabels=0pt}
\psset{xunit=0.04cm, yunit=0.5cm}
\psset{linewidth=1pt}
\psset{dotsize=2pt 0,dotstyle=*}

\begin{figure}[h]
\begin{center}
\begin{pspicture}(230,-5)(560,5) 

\savedata{\mydata}[
{{250, -1.40597}, {251, -1.67231}, {252, -1.92252}, {253, -2.15417},
{254, -2.36502}, {255, -2.55304}, {256, -2.71642}, {257, -2.85358},
{258, -2.9632}, {259, -3.04421}, {260, -3.09583}, {261, -3.11758},
{262, -3.10923}, {263, -3.07086}, {264, -3.00286}, {265, -2.90587},
{266, -2.78082}, {267, -2.62894}, {268, -2.45168}, {269, -2.25075},
{270, -2.0281}, {271, -1.78587}, {272, -1.52641}, {273, -1.25221},
{274, -0.965918}, {275, -0.670306}, {276, -0.368223}, {277,
-0.0625862}, {278, 0.243655}, {279, 0.547544}, {280, 0.846148}, {281,
  1.13658}, {282, 1.41605}, {283, 1.68184}, {284, 1.93141}, {285,
  2.16232}, {286, 2.37237}, {287, 2.55952}, {288, 2.72196}, {289,
  2.85812}, {290, 2.9667}, {291, 3.04664}, {292, 3.09717}, {293,
  3.1178}, {294, 3.10834}, {295, 3.06888}, {296, 2.99979}, {297,
  2.90174}, {298, 2.77569}, {299, 2.62284}, {300, 2.44468}, {301,
  2.24292}, {302, 2.0195}, {303, 1.7766}, {304, 1.51654}, {305,
  1.24184}, {306, 0.955163}, {307, 0.65926}, {308, 0.356995}, {309,
  0.051283}, {310, -0.254924}, {311, -0.55867}, {312, -0.857023},
{313, -1.1471}, {314, -1.42611}, {315, -1.69135}, {316, -1.94027},
{317, -2.17046}, {318, -2.37969}, {319, -2.56596}, {320, -2.72745},
{321, -2.86262}, {322, -2.97016}, {323, -3.04903}, {324, -3.09846},
{325, -3.11799}, {326, -3.10742}, {327, -3.06685}, {328, -2.99668},
{329, -2.89759}, {330, -2.77052}, {331, -2.61671}, {332, -2.43764},
{333, -2.23505}, {334, -2.01088}, {335, -1.76729}, {336, -1.50665},
{337, -1.23147}, {338, -0.944395}, {339, -0.648207}, {340,
-0.345762}, {341, -0.0399792}, {342, 0.266189}, {343, 0.569788}, {344,
   0.867887}, {345, 1.15761}, {346, 1.43616}, {347, 1.70084}, {348,
  1.94911}, {349, 2.17856}, {350, 2.38698}, {351, 2.57236}, {352,
  2.73292}, {353, 2.86709}, {354, 2.97358}, {355, 3.05138}, {356,
  3.09972}, {357, 3.11813}, {358, 3.10645}, {359, 3.06479}, {360,
  2.99354}, {361, 2.89339}, {362, 2.76531}, {363, 2.61055}, {364,
  2.43058}, {365, 2.22715}, {366, 2.00222}, {367, 1.75797}, {368,
  1.49674}, {369, 1.22107}, {370, 0.933614}, {371, 0.637144}, {372,
  0.334524}, {373,
  0.0286748}, {374, -0.277451}, {375, -0.580899}, {376, -0.87874},
{377, -1.1681}, {378, -1.44618}, {379, -1.7103}, {380, -1.95792},
{381, -2.18663}, {382, -2.39424}, {383, -2.57874}, {384, -2.73834},
{385, -2.87151}, {386, -2.97697}, {387, -3.05369}, {388, -3.10093},
{389, -3.11824}, {390, -3.10545}, {391, -3.06268}, {392, -2.99035},
{393, -2.88915}, {394, -2.76007}, {395, -2.60434}, {396, -2.42348},
{397, -2.21922}, {398, -1.99354}, {399, -1.74862}, {400, -1.48682},
{401, -1.21066}, {402, -0.922822}, {403, -0.626074}, {404,
-0.323282}, {405, -0.0173701}, {406, 0.28871}, {407, 0.592002}, {408,
  0.889581}, {409, 1.17857}, {410, 1.45619}, {411, 1.71975}, {412,
  1.9667}, {413, 2.19468}, {414, 2.40147}, {415, 2.58508}, {416,
  2.74373}, {417, 2.8759}, {418, 2.98031}, {419, 3.05596}, {420,
  3.1021}, {421, 3.1183}, {422, 3.1044}, {423, 3.06053}, {424,
  2.98712}, {425, 2.88488}, {426, 2.75479}, {427, 2.59811}, {428,
  2.41635}, {429, 2.21127}, {430, 1.98484}, {431, 1.73925}, {432,
  1.47687}, {433, 1.20024}, {434, 0.912017}, {435, 0.614995}, {436,
  0.312036}, {437,
  0.0060651}, {438, -0.299964}, {439, -0.603098}, {440, -0.90041},
{441, -1.18903}, {442, -1.46617}, {443, -1.72917}, {444, -1.97546},
{445, -2.2027}, {446, -2.40866}, {447, -2.59138}, {448, -2.74909},
{449, -2.88026}, {450, -2.98362}, {451, -3.05819}, {452, -3.10323},
{453, -3.11832}, {454, -3.10331}, {455, -3.05835}, {456, -2.98386},
{457, -2.88057}, {458, -2.74948}, {459, -2.59184}, {460, -2.40919},
{461, -2.20328}, {462, -1.9761}, {463, -1.72985}, {464, -1.4669},
{465, -1.18979}, {466, -0.9012}, {467, -0.603908}, {468, -0.300786},
{469, 0.00523996}, {470, 0.311215}, {471, 0.614186}, {472,
  0.911228}, {473, 1.19947}, {474, 1.47614}, {475, 1.73856}, {476,
  1.9842}, {477, 2.21068}, {478, 2.41583}, {479, 2.59765}, {480,
  2.7544}, {481, 2.88457}, {482, 2.98689}, {483, 3.06038}, {484,
  3.10432}, {485, 3.1183}, {486, 3.10218}, {487, 3.05612}, {488,
  2.98056}, {489, 2.87622}, {490, 2.74412}, {491, 2.58554}, {492,
  2.40199}, {493, 2.19526}, {494, 1.96734}, {495, 1.72043}, {496,
  1.45692}, {497, 1.17934}, {498, 0.890372}, {499, 0.592813}, {500,
  0.289531}}
  ]
\dataplot[linecolor=red,linewidth=0.8pt,plotstyle=curve]{\mydata}

\savedata{\mydata}[
{{250, -3.04146}, {251, -3.35374}, {252, -3.63364}, {253, -3.87847},
{254, -4.08586}, {255, -4.25381}, {256, -4.3807}, {257, -4.4653},
{258, -4.5068}, {259, -4.5048}, {260, -4.45931}, {261, -4.37078},
{262, -4.24006}, {263, -4.06841}, {264, -3.85749}, {265, -3.60934},
{266, -3.32634}, {267, -3.01123}, {268, -2.66706}, {269, -2.29715},
{270, -1.90506}, {271, -1.49458}, {272, -1.06967}, {273, -0.634441},
{274, -0.193086}, {275, 0.250133}, {276, 0.690937}, {277,
  1.12507}, {278, 1.54835}, {279, 1.95668}, {280, 2.34612}, {281,
  2.71291}, {282, 3.05352}, {283, 3.36466}, {284, 3.64331}, {285,
  3.8868}, {286, 4.09277}, {287, 4.25923}, {288, 4.38458}, {289,
  4.4676}, {290, 4.5075}, {291, 4.50389}, {292, 4.45681}, {293,
  4.3667}, {294, 4.23445}, {295, 4.06132}, {296, 3.84899}, {297,
  3.5995}, {298, 3.31527}, {299, 2.99904}, {300, 2.65386}, {301,
  2.28306}, {302, 1.89022}, {303, 1.47914}, {304, 1.05378}, {305,
  0.618245}, {306,
  0.176745}, {307, -0.266461}, {308, -0.707095}, {309, -1.1409},
{310, -1.5637}, {311, -1.9714}, {312, -2.36007}, {313, -2.72596},
{314, -3.06554}, {315, -3.37553}, {316, -3.65293}, {317, -3.89507},
{318, -4.09962}, {319, -4.26459}, {320, -4.3884}, {321, -4.46984},
{322, -4.50814}, {323, -4.50293}, {324, -4.45425}, {325, -4.36257},
{326, -4.22878}, {327, -4.05417}, {328, -3.84043}, {329, -3.58962},
{330, -3.30416}, {331, -2.9868}, {332, -2.64061}, {333, -2.26894},
{334, -1.87536}, {335, -1.46368}, {336, -1.03787}, {337, -0.60204},
{338, -0.160401}, {339, 0.282786}, {340, 0.723243}, {341,
  1.15672}, {342, 1.57903}, {343, 1.9861}, {344, 2.374}, {345,
  2.73898}, {346, 3.07752}, {347, 3.38636}, {348, 3.6625}, {349,
  3.9033}, {350, 4.10642}, {351, 4.2699}, {352, 4.39216}, {353,
  4.47203}, {354, 4.50873}, {355, 4.5019}, {356, 4.45162}, {357,
  4.35838}, {358, 4.22306}, {359, 4.04697}, {360, 3.83182}, {361,
  3.57969}, {362, 3.293}, {363, 2.97452}, {364, 2.62733}, {365,
  2.25478}, {366, 1.86047}, {367, 1.4482}, {368, 1.02194}, {369,
  0.585827}, {370,
  0.144055}, {371, -0.299107}, {372, -0.739382}, {373, -1.17252},
{374, -1.59434}, {375, -2.00077}, {376, -2.38789}, {377, -2.75195},
{378, -3.08946}, {379, -3.39714}, {380, -3.67203}, {381, -3.91147},
{382, -4.11316}, {383, -4.27515}, {384, -4.39586}, {385, -4.47415},
{386, -4.50925}, {387, -4.50082}, {388, -4.44894}, {389, -4.35412},
{390, -4.21728}, {391, -4.03972}, {392, -3.82317}, {393, -3.56971},
{394, -3.2818}, {395, -2.96221}, {396, -2.61402}, {397, -2.2406},
{398, -1.84556}, {399, -1.4327}, {400, -1.00601}, {401, -0.569607},
{402, -0.127708}, {403, 0.315424}, {404, 0.755511}, {405,
  1.18831}, {406, 1.60963}, {407, 2.01542}, {408, 2.40175}, {409,
  2.7649}, {410, 3.10136}, {411, 3.40788}, {412, 3.68151}, {413,
  3.9196}, {414, 4.11985}, {415, 4.28034}, {416, 4.39951}, {417,
  4.47622}, {418, 4.50971}, {419, 4.49967}, {420, 4.4462}, {421,
  4.34982}, {422, 4.21144}, {423, 4.03241}, {424, 3.81446}, {425,
  3.55969}, {426, 3.27055}, {427, 2.94985}, {428, 2.60067}, {429,
  2.22639}, {430, 1.83062}, {431, 1.41718}, {432, 0.990058}, {433,
  0.553379}, {434,
  0.111358}, {435, -0.331737}, {436, -0.77163}, {437, -1.20407},
{438, -1.6249}, {439, -2.03003}, {440, -2.41558}, {441, -2.7778},
{442, -3.11321}, {443, -3.41857}, {444, -3.69093}, {445, -3.92767},
{446, -4.12649}, {447, -4.28548}, {448, -4.4031}, {449, -4.47822},
{450, -4.51011}, {451, -4.49847}, {452, -4.44341}, {453, -4.34545},
{454, -4.20555}, {455, -4.02505}, {456, -3.8057}, {457, -3.54962},
{458, -3.25927}, {459, -2.93746}, {460, -2.58729}, {461, -2.21215},
{462, -1.81566}, {463, -1.40164}, {464, -0.974095}, {465, -0.537144},
{466, -0.0950077}, {467, 0.348046}, {468, 0.787739}, {469,
  1.21983}, {470, 1.64014}, {471, 2.04463}, {472, 2.42937}, {473,
  2.79067}, {474, 3.12503}, {475, 3.42922}, {476, 3.70031}, {477,
  3.93569}, {478, 4.13307}, {479, 4.29056}, {480, 4.40663}, {481,
  4.48017}, {482, 4.51046}, {483, 4.49721}, {484, 4.44055}, {485,
  4.34103}, {486, 4.1996}, {487, 4.01764}, {488, 3.7969}, {489,
  3.5395}, {490, 3.24794}, {491, 2.92503}, {492, 2.57388}, {493,
  2.19789}, {494, 1.80068}, {495, 1.38609}, {496, 0.958119}, {497,
  0.520901}, {498, 0.0786556}, {499, -0.36435}, {500, -0.803838}}
  ]
\dataplot[linecolor=red,linewidth=0.8pt,plotstyle=curve]{\mydata}

\savedata{\mydata}[
{{250, -0.0983738}, {251, -2.10609}, {252, -0.681546}, {253,
-2.69393}, {254, -1.19284}, {255, -3.13342}, {256, -1.61465}, {257,
-3.40975}, {258, -1.93275}, {259, -3.51431}, {260, -2.13678}, {261,
-3.445}, {262, -2.22073}, {263, -3.20632}, {264, -2.18314}, {265,
-2.80925}, {266, -2.02713}, {267, -2.27075}, {268, -1.76036}, {269,
-1.61321}, {270, -1.39467}, {271, -0.86349}, {272, -0.945681}, {273,
-0.0519717}, {274, -0.432143}, {275, 0.788654}, {276, 0.124747}, {277,
   1.62463}, {278, 0.702177}, {279, 2.42244}, {280, 1.2766}, {281,
  3.15008}, {282, 1.82462}, {283, 3.77831}, {284, 2.32393}, {285,
  4.28177}, {286, 2.75414}, {287, 4.63997}, {288, 3.09759}, {289,
  4.83805}, {290, 3.33998}, {291, 4.86738}, {292, 3.47101}, {293,
  4.72588}, {294, 3.48466}, {295, 4.41809}, {296, 3.37953}, {297,
  3.95501}, {298, 3.15883}, {299, 3.35369}, {300, 2.83026}, {301,
  2.63652}, {302, 2.40575}, {303, 1.83041}, {304, 1.90093}, {305,
  0.965739}, {306, 1.33461}, {307, 0.07518}, {308,
  0.727976}, {309, -0.807559}, {310,
  0.103817}, {311, -1.64904}, {312, -0.514373}, {313, -2.41737},
{314, -1.1033}, {315, -3.08347}, {316, -1.64077}, {317, -3.62214},
{318, -2.10654}, {319, -4.01309}, {320, -2.48312}, {321, -4.24169},
{322, -2.75643}, {323, -4.29953}, {324, -2.91635}, {325, -4.1848},
{326, -2.95712}, {327, -3.90227}, {328, -2.87754}, {329, -3.4632},
{330, -2.68105}, {331, -2.88484}, {332, -2.37558}, {333, -2.18982},
{334, -1.97324}, {335, -1.40522}, {336, -1.48986}, {337, -0.561588},
{338, -0.944371}, {339, 0.308287}, {340, -0.358106}, {341,
  1.17061}, {342, 0.246063}, {343, 1.99188}, {344, 0.844581}, {345,
  2.7402}, {346, 1.41413}, {347, 3.38651}, {348, 1.93252}, {349,
  3.90567}, {350, 2.37956}, {351, 4.27745}, {352, 2.7378}, {353,
  4.48735}, {354, 2.99326}, {355, 4.52707}, {356, 3.13592}, {357,
  4.39492}, {358, 3.16012}, {359, 4.09584}, {360, 3.0648}, {361,
  3.64119}, {362, 2.85349}, {363, 3.04836}, {364, 2.53423}, {365,
  2.34008}, {366, 2.11924}, {367, 1.54353}, {368, 1.62441}, {369,
  0.689303}, {370, 1.06873}, {371, -0.189771}, {372,
  0.473568}, {373, -1.0599}, {374, -0.138211}, {375, -1.88762}, {376,
-0.74308}, {377, -2.6411}, {378, -1.31778}, {379, -3.29137}, {380,
-1.8402}, {381, -3.81343}, {382, -2.29027}, {383, -4.1872}, {384,
-2.65067}, {385, -4.39832}, {386, -2.90756}, {387, -4.43872}, {388,
-3.05109}, {389, -4.30687}, {390, -3.07578}, {391, -4.0079}, {392,
-2.98073}, {393, -3.55337}, {394, -2.76965}, {395, -2.96084}, {396,
-2.45073}, {397, -2.2532}, {398, -2.03633}, {399, -1.45775}, {400,
-1.54247}, {401, -0.605191}, {402, -0.988254}, {403,
  0.271566}, {404, -0.395104}, {405, 1.13869}, {406, 0.214059}, {407,
  1.96271}, {408, 0.815694}, {409, 2.71184}, {410, 1.38655}, {411,
  3.35714}, {412, 1.90458}, {413, 3.8737}, {414, 2.34975}, {415,
  4.24156}, {416, 2.70486}, {417, 4.44651}, {418, 2.95617}, {419,
  4.4806}, {420, 3.09395}, {421, 4.34247}, {422, 3.11287}, {423,
  4.0374}, {424, 3.01214}, {425, 3.57712}, {426, 2.79564}, {427,
  2.97932}, {428, 2.47167}, {429, 2.26701}, {430, 2.05272}, {431,
  1.46762}, {432, 1.5549}, {433, 0.611921}, {434,
  0.997404}, {435, -0.267125}, {436,
  0.401702}, {437, -1.13566}, {438, -0.209248}, {439, -1.96023},
{440, -0.811904}, {441, -2.70907}, {442, -1.38304}, {443, -3.35332},
{444, -1.90064}, {445, -3.86817}, {446, -2.34476}, {447, -4.23377},
{448, -2.69829}, {449, -4.43605}, {450, -2.9476}, {451, -4.46721},
{452, -3.08308}, {453, -4.32604}, {454, -3.09952}, {455, -4.01799},
{456, -2.99629}, {457, -3.55493}, {458, -2.77739}, {459, -2.9547},
{460, -2.45125}, {461, -2.24044}, {462, -2.03046}, {463, -1.43967},
{464, -1.53125}, {465, -0.583258}, {466, -0.972878}, {467,
  0.295787}, {468, -0.376879}, {469, 1.16358}, {470, 0.233761}, {471,
  1.98668}, {472, 0.83549}, {473, 2.73336}, {474, 1.4051}, {475,
  3.37482}, {476, 1.92062}, {477, 3.88634}, {478, 2.36218}, {479,
  4.24819}, {480, 2.71273}, {481, 4.44642}, {482, 2.95876}, {483,
  4.47338}, {484, 3.09077}, {485, 4.328}, {486, 3.10368}, {487,
  4.01589}, {488, 2.99698}, {489, 3.54906}, {490, 2.77477}, {491,
  2.94549}, {492, 2.44563}, {493, 2.22842}, {494, 2.02223}, {495,
  1.42549}, {496, 1.52091}, {497, 0.567629}, {498,
  0.960976}, {499, -0.312111}, {500, 0.364028}}
]
\dataplot[linecolor=black,linewidth=0.8pt,plotstyle=dots]{\mydata}
\psaxes[linecolor=gray,Ox=250,Oy=0,Dx=50,dx=50,Dy=2,dy=2]{->}(250,0)(230,-5)(520,5)

  \rput(520,-1){$N$}
  \rput(528,4.5){$\alpha' \sin(\beta'+NV/2)$}
\rput(537,3){$\alpha'' \sin(\beta''+NV/2)$}

\end{pspicture}
\caption{$C_{121}(N)*N^2 e^{-NU/2}$ for $250 \leqslant N \leqslant 500$ \label{qfig}}
\end{center}
\end{figure}


Since $|w_0|<1$ and $U>0$, Conjectures \ref{coonj1} and \ref{coonj2} certainly imply that $C_{011}(N)$ and $C_{121}(N)$  become arbitrarily large  (with $C_{011}(N)$ on the order of the square of $C_{121}(N)$) and do not converge to any limit.
In \cite{OS} the proof of a slightly weaker version of Conjecture \ref{coonj1}, enough to disprove Rademacher's conjecture, is being finished. The  chief difficulty  comes from estimating the error terms and showing they are indeed less than the expected main terms.

Though Rademacher's original conjecture appears to be false, we can speculate that another version of it may be true, perhaps modifying the sequence by averaging the coefficients or introducing
 factors to ensure convergence. It is remarkable that there  seems to be a connection
 between $C_{hk\ell}(N)$ and  $C_{hk\ell}(\infty)$  when $N$ is {\em small}  as noted in \cite[Sect. 4]{SZ} and by Rademacher himself \cite[p. 302]{Ra}. We include more evidence of this phenomenon in Table \ref{small} by setting $C_{hk\ell}(\star):=\frac{1}{100}\sum_{N=1}^{100}C_{hk\ell}(N)$.
\begin{table}[h]
\begin{center}
\begin{tabular}{c|c|c||c|c|c}
$h$ & $k$ & $\ell$ & $C_{hk\ell}(\star)$ & $C_{hk\ell}(\infty)$ & $\left| 1-C_{hk\ell}(\star)/C_{hk\ell}(\infty) \right|$ \\ \hline
$0$ & $1$ & $1$ & $-0.2812 $ & $-0.2929$ & $0.04005$\\
$1$ & $2$ & $1$ & $0.09511$ & $0.09388$ & $0.01309$\\
$1$ & $3$ & $1$ & $0.02429-0.02899 i$ & $0.02417-0.02881 i$ & $0.005911$\\
$1$ & $4$ & $1$ & $0.007312-0.01775 i$ & $0.007252-0.01751 i$ & $0.01332$\\
$0$ & $1$ & $2$ & $0.1921$ & $0.1898$ & $0.01219$\\
$1$ & $2$ & $2$ & $0.01510$ & $0.01531$ & $0.01392$\\
$1$ & $3$ & $2$ & $-0.0009181-0.002514 i$ & $-0.0009364-0.002573 i$ & $0.02233$\\
$1$ & $4$ & $2$ & $-0.0006919-0.0002846 i$ & $-0.0007183-0.0002975 i$ & $0.03771$
\end{tabular}
\caption{} \label{small}
\end{center}
\end{table}

\section{A conjecture of Sills and Zeilberger} \label{zel}
In \cite[Sect. 3]{SZ}, Sills and Zeilberger define\footnote{changing their notation slightly from $P_{01(N-r)}(N)$ to $P_{01r}(N)$}
\begin{equation*}
P_{01r}(N) :=(-1)^N N!  \cdot (-4)^r r! \cdot C_{01(N-r)}(N).
\end{equation*}
For each $r$, by solving a recursion,  they   prove `top down' formulas such as
\begin{equation*}
    P_{010}(N)=1, \quad P_{011}(N)=N^2-N, \quad P_{012}(N)=N^4-\frac{22}{9}N^3+\frac{13}{3}N^2-\frac{26}{9}N,
\end{equation*}
with a procedure they automated.

\begin{conj}[Sills and Zeilberger \cite{SZ}] \label{szconj}
For each $r \gqs 1$, $P_{01r}(N)$ is a monic, alternating, convex polynomial in $N$ of degree $2r$ whose only real roots are $0$ and $1$.
\end{conj}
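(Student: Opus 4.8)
The plan is to work from the generating-function form underlying Proposition \ref{cr2}. Setting $\ell = N-r$ in \eqref{c100} and using the definition of $P_{01r}(N)$, I would first record the identity
\[
P_{01r}(N) = (-4)^r r! \left[\text{coeff. of } z^r\right] e^z\left(\frac{z}{e^z-1}\right)^{1-N+r}\prod_{j=1}^N \frac{jz}{e^{jz}-1},
\]
and observe, via \eqref{ezz}, that this coefficient is visibly a polynomial in $N$, being a finite sum of products of the power sums $s_m(N)$ from \eqref{smn}. This reduces all four assertions to statements about one explicit polynomial.

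For degree and monicity I would use \eqref{ezz}: a term with exponents $(j_0,j_1,\dots)$ contributes a polynomial of $N$-degree $\sum_{m\ge1}(m+1)j_m = (r-j_0)+\sum_{m\ge1}j_m$, since $\deg s_m(N)=m+1$ and $j_0+\sum_{m\ge1}m\,j_m=r$. Because $\sum_{m\ge1}j_m \le \sum_{m\ge1}m\,j_m = r-j_0$, with equality only when $j_m=0$ for $m\ge2$, the maximal degree $2r$ is attained uniquely by $j_0=0,\ j_1=r$. A direct evaluation of that single term (using $B_1=-1/2$ and $s_1(N)=N(N+1)/2$) gives its contribution as $(N^2-N+2+2r)^r$. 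Hence $P_{01r}$ is monic of degree $2r$; I would also record that $N^2-N+2+2r$ has negative discriminant, which I expect to be essential for real-rootedness.

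Next, the roots at $0$ and $1$. At $N\in\{0,1\}$ the product above collapses (empty for $N=0$; the single factor $z/(e^z-1)$ merging with the exponent for $N=1$), so in both cases the generating function becomes $e^z\left(z/(e^z-1)\right)^{r+1}=z^{r+1}e^z/(e^z-1)^{r+1}$. Thus $P_{01r}(0)$ and $P_{01r}(1)$ are each $(-4)^r r!$ times $\operatorname{Res}_{z=0} e^z/(e^z-1)^{r+1}$, which under $u=e^z-1$ becomes $\operatorname{Res}_{u=0} u^{-(r+1)}=0$ for $r\ge1$. This yields the factorization $P_{01r}(N)=N(N-1)Q_r(N)$ with $\deg Q_r = 2r-2$.

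The main obstacle is the remaining three properties. For \emph{alternating} coefficients I would note that $B_m=0$ for odd $m\ge3$ collapses \eqref{ezz} to a sum over $j_1$ and even-indexed $j_{2i}$, and attempt to show $P_{01r}(-N)$ has nonnegative coefficients; however the examples already exhibit cancellation (e.g.\ the coefficient $-22/9$ when $r=2$), so term-by-term positivity fails and a genuine combinatorial argument is required. \emph{Convexity} ($P_{01r}''>0$ on $\mathbb{R}$) and the absence of further real roots would both follow from $Q_r(N)>0$ for all real $N$, i.e.\ from controlling the lower-order corrections to the positive-definite leading block $(N^2-N+2+2r)^r$; but I see no sign control on these corrections from \eqref{ezz} alone, so this final part appears to demand sharper analytic estimates or an induction built on the Sills--Zeilberger recursion, and is where I expect a complete proof to stall --- consistent with only a partial resolution.
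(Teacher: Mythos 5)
Your argument is correct and takes essentially the same route as the paper: both start from \eqref{c100} with $\ell=N-r$, use the exponential expansion \eqref{ezz} to exhibit $P_{01r}$ as a polynomial in $N$ through the power sums $s_m$, isolate the unique top-degree term to obtain monicity and degree $2r$, and verify the roots at $0$ and $1$ (you by substituting $N=0,1$ into the residue, the paper by noting $s_m(x)-x$ vanishes there), while correctly leaving the alternating, convexity, and no-further-real-roots claims unresolved, exactly as the paper does. The paper's partial resolution goes slightly further only in that it also proves the coefficients of $x$ and $x^2$ have the conjectured signs (Theorem \ref{poi} and its sequel) and analyzes the dominant subsum $M_{01r}(x)$, which is alternating but fails convexity for large $r$.
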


By extending our work from Section \ref{tw}, we find a formula for $P_{01r}(N)$ and prove part of Conjecture \ref{szconj}. To begin, rearrange \eqref{c100} into
\begin{equation*} \label{c200}
    \frac{P_{01r}(N)}{(-4)^r r!} = \left[ \text{coeff. of }z^{r} \right]e^z \left(\frac{z}{e^z - 1}\right)^{r+1} \left(\frac{z}{e^z - 1}\right)^{-N}
\left(\frac{z}{e^z - 1}\right)\left(\frac{2z
}{e^{2z} - 1}\right) \cdots \left(\frac{N z}{e^{Nz} - 1}\right).
\end{equation*}
As in \eqref{ezz} this becomes
\begin{equation} \label{c300}
    \frac{P_{01r}(N)}{(-4)^r r!} = \left[ \text{coeff. of }z^{r} \right]e^z \left(\frac{z}{e^z - 1}\right)^{r+1} \exp\left( \sum_{j=1}^r \frac{(-1)^{j+1} B_j z^j}{j \cdot j!} \Bigl(s_j(N)-N \Bigr)\right).
\end{equation}

The expressions $s_1(N)-N$, \dots, $s_r(N)-N$ contain the only appearance of $N$ on the right side of  \eqref{c300}.  Recalling the definition in \eqref{smn}, we may write
\begin{equation} \label{poly1}
    s_m(N)=\frac{1}{m+1}\sum_{j=0}^m \binom{m+1}{j+1} (-1)^{m-j} B_{m-j} \cdot N^{j+1} \qquad (m,N \in \Z_{\gqs 0})
\end{equation}
by rearranging \cite[Eq. (2.3)]{Ra} or \cite[Eq. (6.78)]{Knu} for example. So  $s_m(N)-N$ is a polynomial of degree $m+1$ in $N$ and we may  replace $N$ by an arbitrary variable $x$.

At this point, we need to introduce the Stirling numbers $\stira{n}{m}$. They denote the number of permutations of a set of size $n$ that have $m$ disjoint  cycles. For an in-depth discussion of both types of Stirling numbers $\stirb{n}{m},\stira{n}{m}$,  including their
history and notation, see \cite[Chap. 6]{Knu}, \cite{K2}. They satisfy
\begin{align}\label{sb1}
    \sum_{m=0}^n \stirb{n}{m} x(x-1) \cdots (x-m+1) & = x^n,\\
    x(x+1) \cdots (x+n-1) & = \sum_{m=0}^n \stira{n}{m} x^m. \label{sa1}
\end{align}
Also, the analog of \eqref{norm} is
\begin{equation}
    B_n^{(r)}  = (-1)^n \frac{ r}{r-n}\stira{r}{r-n}\Big/\binom{r}{r-n}, \qquad (n \in \Z_{\gqs 0}, \ r \in \Z_{\gqs n+1}) \label{norp}
\end{equation}
from \cite[Eq. (2)]{Ca}.

\begin{lemma}\label{lx}
For $r \gqs 0$,
\begin{equation} \label{lo}
   e^z \left( \frac{z}{e^z -1}\right)^{r+1}   = \sum_{m =0}^r (-1)^m \stira{r}{r-m} \Big/ \binom{r}{r-m} \frac{z^m}{m!} + O(z^{r+1}).
\end{equation}
\end{lemma}
\begin{proof}
The case $r=0$ is easy so we assume $r\gqs 1$.
The coefficient of $z^m/m!$ on the left of \eqref{lo} is $B_m^{(r+1)}(1)$.
By \eqref{how} and \eqref{norp} we have
\begin{alignat}{2}\label{qt1}
    B_m^{(r+1)}(1) & = \frac{r-m}{r} B_m^{(r)} \qquad & & (0 \lqs m)\\
    & = \frac{r-m}{r} (-1)^m \frac{ r}{r-m}\stira{r}{r-m}\Big/\binom{r}{r-m} \qquad & & (0 \lqs m \lqs r-1) \notag
\end{alignat}
so that
\begin{equation}\label{qt2}
    B_m^{(r+1)}(1) = (-1)^m \stira{r}{r-m}\Big/\binom{r}{r-m} \qquad (0 \lqs m \lqs r-1).
\end{equation}
We may verify that \eqref{qt2} is also true for $m=r$ since $B_m^{(m+1)}(1)=0$ by \eqref{qt1}.
The lemma follows.
\end{proof}

\begin{theorem} \label{poll}
For $r \gqs 1$, $P_{01r}(x)$ is a monic polynomial in $x$ of degree $2r$ with $0$ and $1$ as roots. It is given by
\begin{multline} \label{wer}
    P_{01r}(x) = 4^r \sum_{m=0}^r (-1)^m \stira{r}{m} m! \sum_{1j_1+2j_2+ \cdots + m j_{m} = m}
     \frac{1}{j_1! j_2! \cdots j_m!} \\
    \times
     \left(\frac{B_1}{1 \cdot 1!}\bigl(s_1(x)-x\bigr) \right)^{j_1}  \cdots \left(\frac{(-1)^{m-1}B_{m}}{m \cdot m!}\bigl(s_{m}(x)-x \bigr) \right)^{j_{m}}.
\end{multline}
\end{theorem}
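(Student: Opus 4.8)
The plan is to read off \eqref{wer} by extracting the coefficient of $z^r$ from the product in \eqref{c300}, and then to deduce the three qualitative claims---polynomiality together with degree, monicity, and the vanishing at $0$ and $1$---directly from the resulting closed form. The passage from $P_{01r}(N)$ to a genuine polynomial $P_{01r}(x)$ is justified by \eqref{poly1}, which exhibits each $s_j(N)-N$ as a polynomial in $N$ (of degree $j+1$), so that $N$ may be replaced throughout by the indeterminate $x$.

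First I would treat the two factors in \eqref{c300} separately. For $e^z\left(z/(e^z-1)\right)^{r+1}$ I would invoke Lemma \ref{lx}, whose expansion supplies the coefficient $(-1)^m\stira{r}{r-m}\big/\binom{r}{r-m}\cdot\frac{1}{m!}$ of $z^m$ for $0\lqs m\lqs r$; the $O(z^{r+1})$ error is irrelevant since only terms up to $z^r$ contribute. For the exponential factor I would expand exactly as in the derivation of \eqref{ezz}, so that its coefficient of $z^{r-m}$ is the multinomial sum over $1j_1+2j_2+\cdots = r-m$ of $\frac{1}{j_1!j_2!\cdots}\prod_i\bigl(\frac{(-1)^{i-1}B_i}{i\cdot i!}(s_i(x)-x)\bigr)^{j_i}$. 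Taking the Cauchy product and collecting the coefficient of $z^r$ presents $P_{01r}(x)\big/((-4)^r r!)$ as a double sum over $m$ and $(j_i)$. The remaining work is bookkeeping: substituting $m\mapsto r-m$ (which turns the inner constraint into $1j_1+\cdots+mj_m = m$), reducing the prefactor $\frac{r!\,\stira{r}{m}}{(r-m)!\binom{r}{m}}$ to $\stira{r}{m}m!$ via $\frac{r!}{(r-m)!\binom{r}{m}} = m!$, and tracking signs through $(-4)^r(-1)^{r-m} = 4^r(-1)^m$. This yields \eqref{wer} in precisely the stated normalization.

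Next I would establish the degree and monicity from \eqref{wer}. Since $s_i(x)$ has leading term $x^{i+1}/(i+1)$ by \eqref{poly1}, each factor $s_i(x)-x$ has degree exactly $i+1$ for $i\gqs 1$, so the term of \eqref{wer} indexed by $m$ and $(j_i)$ has degree $\sum_i(i+1)j_i = m+\sum_i j_i$. Because $\sum_i j_i\lqs\sum_i ij_i = m\lqs r$, every term has degree at most $2r$, and equality forces $m=r$ together with $j_i=0$ for all $i\gqs 2$, i.e. $j_1=r$. This single top-degree term I would evaluate explicitly: with $\stira{r}{r}=1$, $B_1=-1/2$ and $s_1(x)-x = x(x-1)/2$, it equals $4^r(-1)^r\bigl(-x(x-1)/4\bigr)^r = (x(x-1))^r$. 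Hence $P_{01r}(x)$ is monic of degree exactly $2r$.

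Finally, the roots at $0$ and $1$ fall out of the same formula. The key observation is that $s_i(0)=0$ and $s_i(1)=1^i=1$ by \eqref{smn}, so each $s_i(x)-x$ vanishes at both $x=0$ and $x=1$. For every term with $m\gqs 1$ the constraint $\sum_i ij_i = m$ forces some $j_i\gqs 1$, so that term carries a factor vanishing at $0$ and at $1$; the lone $m=0$ term is killed by $\stira{r}{0}=0$ for $r\gqs 1$. Thus $P_{01r}(0)=P_{01r}(1)=0$. I expect the main obstacle to be organizational rather than conceptual, namely managing the reindexing and the sign and binomial simplifications in the coefficient extraction so that \eqref{wer} emerges with the correct constants; the degree and root claims are then immediate consequences of the explicit form.
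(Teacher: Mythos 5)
Your proposal is correct and follows essentially the same route as the paper: extract the coefficient of $z^r$ in \eqref{c300} via Lemma \ref{lx} and the exponential expansion as in \eqref{ezz}, then read off degree, monicity, and the roots $0$, $1$ from the resulting formula exactly as the paper does. The only difference is that you spell out the Cauchy-product bookkeeping (the reindexing $m\mapsto r-m$, the simplification $r!/\bigl((r-m)!\binom{r}{m}\bigr)=m!$, and the sign tracking) which the paper leaves implicit, and all of those computations check out.
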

\begin{proof}
The formula \eqref{wer} follows from Lemma \ref{lx} and \eqref{c300}, showing that $P_{01r}(x)$ is a  polynomial in $x$.
The term in \eqref{wer} corresponding to $(j_1,j_2, \dots ,j_m)$ has degree
$$
2j_1+3j_2+ \dots +(m+1)j_m = j_1+j_2+ \dots +j_m+m
$$
with $m \lqs r$ and $j_1 \lqs m$, $j_2 \lqs m/2$, \dots, $j_m \lqs 1$. Hence, the maximum degree term has $m=j_1=r$ and $j_2=j_3= \dots =j_r=0$. This $2r$-degree term contributes
\begin{equation*}
    4^r (-1)^r r! \frac{1}{r!} \left(\frac{B_1}{1 \cdot 1!}\bigl(s_1(x)-x\bigr) \right)^{r} = 4^r  \left(\frac{x^2-x}{4} \right)^{r}=x^{2r}+O(x^{2r-1})
\end{equation*}
to \eqref{wer}, proving that $P_{01r}(x)$ is  monic of degree $2r$.

Since $s_m(1)=1^m$ by definition and $s_m(n)$ in \eqref{poly1} has no constant term, we see that $0$ and $1$ are roots of $s_m(x)-x$ for $m \gqs 1$. Therefore $0$ and $1$ are roots of all terms on the right side of \eqref{wer} except possibly the term with $j_1=j_2= \dots =j_m=m=0$. However this term must be zero because $\stira{r}{0}=0$ for $r\gqs 1$. Therefore $0$ and $1$ are roots of $P_{01r}(x)$, completing the proof.
\end{proof}

With Theorem \ref{poll}, we have proven part of Sills and Zeilberger's Conjecture \ref{szconj} but it remains to show that $P_{01r}(x)$ is convex, i.e. $P_{01r}''(x) \gqs 0$, and  that the coefficients of $P_{01r}(x)$  alternate in sign. Note that convexity implies that there are no more real roots after $0$ and $1$.

As we showed in Theorem \ref{poll}, the coefficient of $x^{2r}$ is $1$. The same methods allow us to calculate the next highest terms:
\begin{equation} \label{alte}
    P_{01r}(x) = x^{2r}-\frac{2r^2+7r}{9} x^{2r-1} + \frac{4r^4+12 r^3+287 r^2-303r}{162} x^{2r-2} + \dots
\end{equation}
for example, proving the formulas observed in \cite[Remark 3.4]{SZ}. It follows from \eqref{alte} that the coefficients of the three highest degree terms of $P_{01r}(x)$ alternate. 
We may also examine the lowest degree terms.

\begin{theorem} \label{poi}
For $r\gqs 1$, the coefficient of $x$ in the polynomial $P_{01r}(x)$ is always negative with
\begin{equation} \label{x}
   \left[ \text{\rm coeff. of }x \right] P_{01r}(x) =  4^r \sum_{i=1}^r \stira{r}{i} \frac{B_i}{i}   \Bigl(1-(-1)^i B_i \Bigr).
\end{equation}
\end{theorem}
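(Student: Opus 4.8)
The plan is to read \eqref{x} off as the coefficient of $x^1$ in \eqref{wer}, and then to decide its sign by recasting the sum in terms of N\"orlund numbers. For the formula, first note from \eqref{poly1} that each $s_k(x)-x$ has $x=0$ as a \emph{simple} zero, its linear coefficient being $(-1)^kB_k-1$. Consequently the product $\prod_{k}\bigl(\tfrac{(-1)^{k-1}B_k}{k\cdot k!}(s_k(x)-x)\bigr)^{j_k}$ in \eqref{wer} vanishes to order $\sum_k j_k$ at $x=0$, so only index vectors with $\sum_k j_k=1$ contribute to $[x^1]$. Such a vector has a single $j_i=1$ and all other $j_k=0$, which forces $m=i$ in \eqref{wer}. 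Substituting this, using $1/(j_1!\cdots j_m!)=1$, the linear coefficient $(-1)^iB_i-1$ of $s_i(x)-x$, and the cancellation $i!/(i\cdot i!)=1/i$ together with the sign $(-1)^i(-1)^{i-1}=-1$, collapses the term to $4^r\stira{r}{i}\tfrac{B_i}{i}\bigl(1-(-1)^iB_i\bigr)$, giving \eqref{x}. This step is pure bookkeeping.

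For the sign I would first discard the vanishing terms: since $B_i=0$ for odd $i\gqs 3$, only $i=1$ and even $i$ survive in \eqref{x}, and the $i=1$ term is $4^r\stira{r}{1}B_1(1+B_1)=-4^r(r-1)!/4<0$. To expose the sign of the whole sum, expand $1-(-1)^iB_i$ so the summand reads $\tfrac{B_i}{i}-\tfrac{(-1)^iB_i^2}{i}$, and convert the Stirling weights into N\"orlund numbers via \eqref{norp}: for $1\lqs i\lqs r$ one has $\binom{r}{i}\,|B_{r-i}^{(r)}|=\tfrac{r}{i}\stira{r}{i}$ and $\operatorname{sign}B_{r-i}^{(r)}=(-1)^{r-i}$. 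A short computation then folds \eqref{x} into the compact form
\[
\left[\text{coeff. of }x\right]P_{01r}(x)=\frac{(-1)^{r+1}4^r}{r}\sum_{i=0}^r\binom{r}{i}B_{r-i}^{(r)}B_i^2 ,
\]
so that negativity is equivalent to $(-1)^r\sum_{i=0}^r\binom{r}{i}B_{r-i}^{(r)}B_i^2>0$. Using the sign of $B_{r-i}^{(r)}$, every term equals $(-1)^i\binom{r}{i}|B_{r-i}^{(r)}|B_i^2$; the only negative one is $i=1$, equal to $-r!/4$, while $i=0$ and all even $i$ give nonnegative contributions.

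The hard part is showing that these nonnegative terms outweigh $r!/4$. The $i=0$ term is $|B_r^{(r)}|$, and a clean handle is the expansion $|B_r^{(r)}|/r!=1-\sum_{k=1}^r|b_k|$, where the $b_k$ are the Bernoulli numbers of the second kind (from $B_r^{(r)}/r!=[w^r]\,w/((1+w)\log(1+w))$, which also re-proves $\operatorname{sign}B_r^{(r)}=(-1)^r$). Since $\sum_{k=1}^\infty|b_k|=1$, this term alone beats $r!/4$ only for small $r$ and tends to $0$ relative to $r!$ as $r\to\infty$, so the even-index terms $\tfrac{r}{i}\stira{r}{i}B_i^2$ must carry the argument. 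I expect the efficient way to finish for large $r$ is to keep only the top even-index term---$B_r^2$ when $r$ is even and $\tfrac{r^2}{2}B_{r-1}^2$ when $r$ is odd---and compare it with $r!/4$ via $|B_{2n}|\sim 2(2n)!/(2\pi)^{2n}$; this wins precisely once $r!$ overtakes $(4\pi^2)^r$, i.e.\ past an explicit threshold, after which only a finite range of $r$ remains to be checked directly (e.g.\ by the recursion of Theorem \ref{cpu}). The genuine obstacle is to replace this large-$r$/finite-check split by a single uniform inequality; a promising route is to bound the full even-index sum from below using the positivity of $B_i^2$ for even $i$ together with the integral representation $\sum_{i\gqs 1}\tfrac{\stira{r}{i}}{i}x^i=\int_0^x\prod_{k=1}^{r-1}(t+k)\,dt$.
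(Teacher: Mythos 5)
Your derivation of \eqref{x} is the same as the paper's: since $x$ divides $s_m(x)-x$, only index vectors with a single $j_i=1$ (forcing $m=i$) survive in \eqref{wer}, and the bookkeeping collapses to the stated sum. For the negativity you take a genuinely different route. The paper works directly with \eqref{x}: it notes $|B_i|>1$ exactly for even $i\gqs 14$, so those terms are negative; it keeps the very negative $i=r$ term, bounds everything with $i\lqs 12$ crudely by $2\sum_i\stira{r}{i}=2\,r!$, and uses \eqref{bn1}, \eqref{bn2} to get negativity for $r\gqs 110$, finishing with a direct check of $r<110$. Your N\"orlund reformulation
\[
\left[\text{coeff. of }x\right]P_{01r}(x)=\frac{(-1)^{r+1}4^r}{r}\sum_{i=0}^r\binom{r}{i}B_{r-i}^{(r)}B_i^2
\]
is correct (I verified $r=1,2$; in general it rests on the convolution identity $\sum_{i=0}^r\binom{r}{i}(-1)^iB_i\,B_{r-i}^{(r)}=B_r^{(r+1)}(1)=0$, which follows from \eqref{how} since $\sum_n(-1)^nB_nz^n/n!=ze^z/(e^z-1)$ --- you should state this explicitly, as it is the whole content of your ``short computation''). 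The reformulation buys a cleaner structure: exactly one term, $i=1$, is negative and equals $-r!/4$, so you only need the nonnegative terms to exceed $r!/4$, whereas the paper must control terms of mixed sign. After that the asymptotic engine is identical in both proofs: the top term $B_r^2$ (or $\tfrac{r^2}{2}B_{r-1}^2$ for odd $r$ --- a case you handle explicitly while the paper assumes ``$r$ even for simplicity'') beats $r!/4$ once $r!$ overtakes $(4\pi^2)^r$, i.e.\ for $r$ beyond roughly $4\pi^2e\approx 107$, and both arguments close with a threshold-plus-finite-check. Your final worry about replacing this split by a single uniform inequality is an aesthetic one, not a gap: the threshold bound together with a direct evaluation of \eqref{x} for the finitely many remaining $r$ (the natural finite check, rather than the recursion of Theorem \ref{cpu}) is a complete proof and is exactly how the paper finishes.
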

\begin{proof}
We prove \eqref{x} first. Since $x$ divides $s_m(x)-x$, the only terms in \eqref{wer} that can contribute to the $x$ term have all of $j_1=j_2= \dots =j_m=0$ except for one $j_i=1$. The coefficient of $x$ in $s_i(x)-x$ is $(-1)^i B_i -1$ so
\begin{equation*}
    \left[ \text{\rm coeff. of }x \right] \frac{(-1)^{i-1}B_{i}}{i \cdot i!}\bigl(s_{i}(x)-x \bigr) = \frac{B_{i}}{i \cdot i!}\bigl((-1)^i-B_i \bigr).
\end{equation*}
Hence
\begin{equation*}
    \left[ \text{\rm coeff. of }x \right] P_{01r}(x)  = 4^r \sum_{i=1}^r (-1)^i \stira{r}{i}  i! \frac{B_{i}}{i \cdot i!}\bigl((-1)^i-B_i \bigr)
\end{equation*}
and we have verified \eqref{x}.

To complete the proof we need to check these coefficients are negative. We recall a few facts about Bernoulli numbers, see \cite[Chap. 1]{Ra} for example. We have $B_i=0$ for $i \gqs 3$ and odd. Also
\begin{equation*}
    B_i=(-1)^{i/2+1} 2 \zeta(i) \frac{i!}{(2\pi)^i}  \qquad (i\gqs 2, \text{ even})
\end{equation*}
where $\zeta$ is the Riemann zeta function and $1<\zeta(i) \lqs \zeta(2)=\pi^2/6<2$ for $i\gqs 2$. Hence
\begin{equation}\label{bn1}
  2\frac{i!}{(2\pi)^i} <  \left|B_i \right| < 4\frac{i!}{(2\pi)^i}  \qquad (i\gqs 2, \text{ even}).
\end{equation}
Also, by Stirling's formula \cite[pp. 26-28]{Ra},
\begin{equation}\label{bn2}
    2\sqrt{i} \left(\frac ie\right)^i < i! < 3\sqrt{i} \left(\frac ie\right)^i \qquad (i \gqs 1).
\end{equation}

It follows that $|B_i|>1$ exactly for $i \gqs 14$ and even ($B_{14}=7/6$) and so
\begin{equation*}
    \stira{r}{i} \frac{B_i}{i}   \Bigl(1-(-1)^i B_i \Bigr)  < 0 \qquad (i\gqs 14, \text{ even}).
\end{equation*}
By omitting all the terms with $14 \lqs i <r$ (and assuming $r \gqs 14$ and even for simplicity) we find
\begin{align} \notag
   4^{-r} \left[ \text{\rm coeff. of }x \right] P_{01r}(x) & < \stira{r}{r} \frac{B_r}{r}   \Bigl(1- B_r \Bigr) + \sum_{i=1}^{12} \stira{r}{i} \frac{B_i}{i}   \Bigl(1-(-1)^i B_i \Bigr) \\
   & <  \stira{r}{r} \frac{B_r}{r}   \Bigl(1- B_r \Bigr) + 2\sum_{i=1}^r \stira{r}{i}. \label{lkj}
\end{align}
Setting $x=1$ in \eqref{sa1} shows that $\sum_{i=1}^r \stira{r}{i} =r!$. Using the bounds \eqref{bn1}, \eqref{bn2} in \eqref{lkj} then yields
\begin{equation*}
    4^{-r} \left[ \text{\rm coeff. of }x \right] P_{01r}(x) < 8 r!\left( 1-\frac{1}{\sqrt{r}}\left(\frac{r}{4\pi^2 e} \right)^r \right)
\end{equation*}
showing that the coefficients are certainly negative for $r \gqs 110$. Checking directly the coefficients for $1 \lqs r < 110$ completes the proof.
\end{proof}

The same method allows us to prove formulas for the coefficients of $x^2$, $x^3$ etc. For example the next result has a similar proof to Theorem \ref{poi}.

\begin{theorem}
For $r\gqs 1$, the coefficient of $x^2$ in the polynomial $P_{01r}(x)$ is always positive and given by
\begin{multline*}
    4^r \left( \frac 14 \stira{r}{1} -\frac 1{24} \stira{r}{2}+\frac 12 \sum_{i=1}^{r/2} \stira{r}{2i}\binom{2i}{i} \left(\frac{B_i}{i}\Bigl(1-(-1)^i B_i \Bigr) \right)^2\right. \\
    \left. +\sum_{i=1}^{r} \sum_{j=i+1}^{r} \stira{r}{i+j}\binom{i+j}{i} \frac{B_i}{i}\Bigl(1-(-1)^i B_i\Bigr) \frac{B_j}{j}\Bigl(1-(-1)^j B_j\Bigr) \right).
\end{multline*}
\end{theorem}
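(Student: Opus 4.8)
The plan is to compute the coefficient of $x^2$ exactly by extending the argument that proved Theorem \ref{poi}, and then to establish positivity by an estimation in the spirit of that proof. First I would expand each factor in \eqref{wer} as $s_i(x)-x = \alpha_i x + \beta_i x^2 + O(x^3)$, where by \eqref{poly1} one has $\alpha_i = (-1)^iB_i - 1$ and $\beta_i = \tfrac{i}{2}(-1)^{i-1}B_{i-1}$. Since every factor is divisible by $x$, a product $\prod_i(\cdots)^{j_i}$ has order $\sum_i j_i$ in $x$, so only terms with $\sum_i j_i \lqs 2$ can contribute to $x^2$. In the ``single factor'' case $\sum_i j_i = 1$ (so some $j_i=1$, $m=i$) I extract the $x^2$-coefficient $\beta_i$; the contribution is proportional to $\stira{r}{i}B_iB_{i-1}$, which vanishes for $i\gqs 3$ since then one of $B_{i-1},B_i$ is a Bernoulli number of odd index exceeding $1$ (hence zero), leaving only $i=1,2$ and the terms $\tfrac14\stira{r}{1} - \tfrac1{24}\stira{r}{2}$. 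In the ``two factor'' case $\sum_i j_i = 2$ I take the linear coefficient $\alpha_i$ from each chosen factor; splitting according to whether the two indices coincide ($j_i=2$, giving the squared terms $\tfrac12\stira{r}{2i}\binom{2i}{i}T_i^2$) or differ (giving the cross terms), and writing $T_i := \tfrac{B_i}{i}(1-(-1)^iB_i)$ as in Theorem \ref{poi}, reproduces the stated formula after the arithmetic with the prefactors $(-1)^m\stira{r}{m}m!$ is carried out exactly as there.

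For positivity I would first regroup to isolate the manifestly nonnegative pieces. The squared terms $\tfrac12\stira{r}{2i}\binom{2i}{i}T_i^2$ are all $\gqs 0$, and combining the $i=1$ squared term $\tfrac1{16}\stira{r}{2}$ with the single-factor terms collapses the delicate low-order part to $\tfrac14\stira{r}{1} + \tfrac1{48}\stira{r}{2}$, which is strictly positive. Thus everything reduces to controlling the cross-term double sum. The key simplification is a parity observation: because $T_i=0$ for odd $i\gqs 3$, a cross term is nonzero only if both indices are even or one index equals $1$. Consequently the terms with $i+j$ odd reduce to the single family $\{1,j\}$ with $j$ even, contributing $-\tfrac14\sum_{j\text{ even}}\stira{r}{j+1}(j+1)T_j$; since $T_j<0$ for all even $j\gqs 14$ (the same threshold $|B_j|>1$ used in Theorem \ref{poi}), these large-$j$ terms are \emph{positive} and form the dominant contribution.

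The remaining step, which I expect to be the main obstacle, is to show this positive odd-index family outweighs all negative contributions: the even--even cross terms $\sum_{i<j,\ i,j\text{ even}}\stira{r}{i+j}\binom{i+j}{i}T_iT_j$ (which have mixed sign) together with the finitely many negative small-$j$ terms of the odd family. I would estimate these using the bounds \eqref{bn1}, \eqref{bn2} on $|B_i|$ and $i!$, the identity $\sum_i\stira{r}{i}=r!$ from \eqref{sa1}, and the standard estimate $\stira{r}{r-k}\asymp r^{2k}/(2^k k!)$. The heuristic that makes this succeed is that occupying the index-$1$ slot is far more efficient than using two even indices: passing from a slot of size $2$ to a slot of size $1$ enlarges the relevant $\stira{r}{\cdot}$ by a factor of order $r^2$ while only forfeiting the bounded ratio $|T_2|/|T_1|$, so termwise each even--even cross term is smaller than a matching odd-family term by a factor $O(1/r)$. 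Making this comparison uniform in $r$, so that the positive part provably dominates for all large $r$, is the technically demanding part; as in Theorem \ref{poi} I would then finish by checking the finitely many remaining values of $r$ below the resulting threshold directly from the formula.
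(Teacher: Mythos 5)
Your derivation of the formula is correct, and it is precisely the method the paper intends: the paper offers no separate argument for this result beyond the remark that it ``has a similar proof to Theorem \ref{poi}'', and your computation --- expanding $s_i(x)-x=((-1)^iB_i-1)x+\tfrac{i}{2}(-1)^{i-1}B_{i-1}x^2+O(x^3)$, noting that only terms of \eqref{wer} with $\sum_i j_i\lqs 2$ can reach $x^2$, and splitting into the single-factor case (which survives only for $i=1,2$ because $B_iB_{i-1}=0$ for $i\gqs 3$) and the two-factor case (equal indices giving the squared terms, unequal indices the cross terms) --- reproduces the stated expression with the correct constants; it checks against the known coefficients $1$ and $\tfrac{13}{3}$ of $x^2$ in $P_{011}$ and $P_{012}$. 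Your regrouping $-\tfrac{1}{24}\stira{r}{2}+\tfrac{1}{16}\stira{r}{2}=\tfrac{1}{48}\stira{r}{2}$ and the observation that the $\{1,j\}$ cross terms with $j$ even and $j\gqs 14$ are positive (since your $T_j<0$ there) are also sound.

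The gap is in the positivity half. You have correctly identified the structure of an argument parallel to Theorem \ref{poi} --- a dominant positive contribution from the $\{1,j\}$ family, crude bounds on everything else via \eqref{bn1}, \eqref{bn2} and $\sum_i\stira{r}{i}=r!$, then a finite check of small $r$ --- but the decisive quantitative comparison is only asserted as a heuristic (``smaller by a factor $O(1/r)$'') and never executed. Since there are on the order of $r^2$ even--even cross terms of mixed sign weighing against only $O(r)$ positive $\{1,j\}$ terms, a termwise $O(1/r)$ saving is exactly borderline and the summed comparison must be carried out with explicit constants; moreover the identity of the largest positive term depends on the parity of $r$ (one needs $j=r-1$ or $j=r-2$ even), which a uniform-in-$r$ estimate has to accommodate, just as the proof of Theorem \ref{poi} quietly restricts to $r$ even before its final bound. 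As written, the proposal establishes the displayed formula but leaves the positivity claim unproven.
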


So far, our techniques  allow us to check one-by-one that the coefficients of $P_{01r}(x)$ are alternating.
We next show that a large piece of $P_{01r}(x)$ is  alternating, though for very large $r$ it is not convex.
Consider the sum of terms in \eqref{wer} with $j_1=m$ and $j_2=j_3=\dots =j_m=0$. Call this subsum $M_{01r}(x)$ so that
\begin{align*}
    M_{01r}(x) & = 4^r \sum_{m=0}^r (-1)^m \stira{r}{m} m!
     \frac{1}{m!}
     \left(\frac{B_1}{1 \cdot 1!}\bigl(s_1(x)-x\bigr) \right)^{m}\\
     & = 4^r \sum_{m=0}^r  \stira{r}{m} \left(\frac{x(x-1)}4 \right)^m\\
     & = \prod_{j=0}^{r-1}(x(x-1)+4j).
\end{align*}

\begin{lemma}
For $r\gqs 1$, $M_{01r}(x)$ is alternating but not always convex.
\end{lemma}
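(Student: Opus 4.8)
The plan is to exploit the factored form $M_{01r}(x)=\prod_{j=0}^{r-1}\bigl(x(x-1)+4j\bigr)=F\bigl(x(x-1)\bigr)$, where $F(y):=\prod_{j=0}^{r-1}(y+4j)$. By \eqref{sa1} (applied with argument $y/4$), $F(y)=\sum_{m=0}^{r}\stira{r}{m}4^{r-m}y^m$ has nonnegative coefficients, with $\stira{r}{m}>0$ for $1\lqs m\lqs r$ and $\stira{r}{0}=0$. Both assertions of the lemma reduce to this single structural fact together with the substitution $y=x(x-1)$.

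For the alternation I would track signs under that substitution. Writing $(x^2-x)^m=x^m(x-1)^m$ and expanding $(x-1)^m$ binomially, the coefficient of $x^k$ in $(x^2-x)^m$ equals $(-1)^k\binom{m}{k-m}$, nonzero exactly when $m\lqs k\lqs 2m$; the crucial point is that this sign is $(-1)^k$ regardless of $m$. Summing against the positive weights $\stira{r}{m}4^{r-m}$ gives
\begin{equation*}
[x^k]\,M_{01r}(x)=(-1)^k\sum_{m}\stira{r}{m}4^{r-m}\binom{m}{k-m},
\end{equation*}
where the inner sum is a sum of positive terms, strictly positive for every $1\lqs k\lqs 2r$ (the index $m=\lceil k/2\rceil$ always lies in $[1,r]$ and contributes). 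Hence the coefficient of $x^k$ has sign exactly $(-1)^k$, so $M_{01r}(x)$ is alternating. This part is routine once the sign identity is noticed.

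For the failure of convexity I would test $M_{01r}''$ at the symmetry point $x=1/2$. With $u:=x(x-1)$ one has $u'(1/2)=0$ and $u''=2$, so the chain rule kills the $F''$ term and leaves
\begin{equation*}
M_{01r}''(1/2)=2\,F'(-1/4).
\end{equation*}
Using the logarithmic derivative $F'(y)=F(y)\sum_{j=0}^{r-1}(y+4j)^{-1}$, I would note $F(-1/4)=(-1/4)\prod_{j=1}^{r-1}(4j-1/4)<0$, while
\begin{equation*}
\sum_{j=0}^{r-1}\frac{1}{4j-1/4}=-4+\sum_{j=1}^{r-1}\frac{1}{4j-1/4}.
\end{equation*}
Since $\frac{1}{4j-1/4}>\frac{1}{4j}$ and $\sum_{j\gqs1}\frac{1}{4j}$ diverges, the tail exceeds $4$ once $r$ is large enough, so the bracketed sum becomes positive; then $F'(-1/4)$ is a negative quantity times a positive one, giving $M_{01r}''(1/2)<0$. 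Thus $M_{01r}$ fails to be convex for all sufficiently large $r$, which is exactly the assertion ``not always convex.'' For small $r$ (e.g.\ $r=1,2$) one checks directly that $M_{01r}''>0$, consistent with the observed convexity of $P_{01r}$.

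The alternating claim is immediate from the sign bookkeeping; the real content is the non-convexity. The key idea there is the choice $x=1/2$, which removes the dominant term $F''(u)(u')^2$ and reduces convexity failure to the sign of $F'$ at the single point $y=-1/4$. The only genuine estimate required is that the harmonic-type tail $\sum_{j=1}^{r-1}(4j-1/4)^{-1}$ eventually surpasses $4$, which follows from divergence alone, so no sharp constant is needed—although the resulting threshold on $r$ is very large and could be lowered by optimizing the test point.
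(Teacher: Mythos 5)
Your proposal is correct and follows essentially the same route as the paper: the paper establishes alternation by noting $M_{01r}(-x)=\prod_{j=0}^{r-1}(x^2+x+4j)$ has nonnegative coefficients (your coefficient bookkeeping with $(-1)^k\binom{m}{k-m}$ is just an expanded version of this substitution trick), and it proves non-convexity from the same evaluation at the symmetry point, namely $M_{01r}''(1/2)=2\left(1-\sum_{a=1}^{r-1}\frac{1}{16a-1}\right)\prod_{j=1}^{r-1}\left(4j-\frac14\right)$, which your chain-rule and logarithmic-derivative computation reproduces exactly. The concluding estimate is also the same—divergence of the harmonic-type sum—yielding the identical threshold regime $r>e^{16}$, so there are no gaps.
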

\begin{proof}
It is easy to see that $M_{01r}(x)$ is alternating: just note that $M_{01r}(-x)=\prod_{j=0}^{r-1}(x^2+x+4j)$ has all its coefficients positive. To check convexity, a straightforward calculation shows
\begin{equation*}
    M_{01r}''(1/2)=2  \left( 1-\sum_{a=1}^{r-1} \frac1{16 a-1}\right) \prod_{j=1}^{r-1}(4j-1/4).
\end{equation*}
Since $\sum_{a=1}^{r-1}1/a >\log r$ it follows that $M_{01r}(x)$ is not convex for $r>e^{16}$.
\end{proof}

This perhaps casts doubt on $P_{01r}(x)$ being convex for all $r$. However, if we could show that $M_{01r}(x)$ is the dominant part of $P_{01r}(x)$, having larger coefficients  than the remaining piece $P_{01r}(x)-M_{01r}(x)$, this would imply that $P_{01r}(x)$ is also alternating. Hopefully these issues can be pursued  in a future work.

We finally remark that throughout this section we have assumed $h/k=0/1$, so it would be interesting to see what happens in the general case. What are the expressions for $C_{hk(N-r)}(N)$ analogous to $P_{01r}(N)$?

\section{Further formulas} \label{fur}

 Andrews supplied the first formula for $C_{011}(N)$, expressing it in \cite[Theorem 1]{An} as
 \begin{multline} \label{Andrews}
    C_{011}(N)=\frac{-1}{N!}\sum_{j_2=1}^1 \sum_{j_3=1}^2 \cdots \sum_{j_N=1}^{N-1} e\left(-\frac{j_2}{2} -\frac{j_3}{3}- \cdots -\frac{j_N}{N}\right)
    \\
    \times  \sum_{r_2+\cdots + r_N=N-1} \left( \frac{e(j_2/2)}{1-e(j_2/2)}\right)^{r_2} \cdots \left( \frac{e(j_N/N)}{1-e(j_N/N)}\right)^{r_N}
 \end{multline}
 for $e(z):=e^{2\pi i z}$.
  The starting point for the general case of Andrews' formula is the  identity
\begin{equation}\label{and}
    C_{hk\ell}(N) = \frac{1}{(N-\ell)!}\frac{d^{N-\ell}}{dx^{N-\ell}} \left. \frac{(x-\rho)^{N}}{(1-x)(1-x^2) \cdots (1-x^N)}\right|_{x=\rho}
\end{equation}
for $\rho=e^{2\pi i h/k}$ as before. With a different treatment of \eqref{and} we obtain
\begin{equation} \label{ano}
    C_{01\ell}(N)=
\frac{(-1)^{N}}{N!}
\sum_{m=1}^{N-\ell} (-1)^m
\sum_{ \substack{i_1, i_2, \dots ,i_m \geqslant 1 \\
i_1+i_2+ \cdots +i_m=N-\ell \\
1\leqslant r_1 \leqslant r_2 \leqslant \cdots \leqslant r_m \leqslant N }}
 \frac{
\binom{r_1}{i_1+1} \binom{r_2}{i_2+1} \cdots \binom{r_m}{i_m+1}}
{r_1 \cdot r_2 \cdots r_m}
\end{equation}
as a special case of Theorem \ref{big} below.
We begin with some straightforward lemmas.

\begin{lemma}[Fermat] \label{mabel}
We have
\begin{equation}\label{bino}
\binom{j}{j}+\binom{j+1}{j}+\binom{j+2}{j}+ \cdots +\binom{r-1}{j}= \binom{r}{j+1}.
\end{equation}
\end{lemma}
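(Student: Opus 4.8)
The plan is to prove the ``hockey stick'' identity
\begin{equation*}
\sum_{i=j}^{r-1} \binom{i}{j} = \binom{r}{j+1}
\end{equation*}
by induction on $r$, using the Pascal recurrence $\binom{n}{j}+\binom{n}{j+1}=\binom{n+1}{j+1}$ as the engine. I would fix $j\gqs 0$ and induct on $r \gqs j+1$. For the base case $r=j+1$, the left side is the single term $\binom{j}{j}=1$ and the right side is $\binom{j+1}{j+1}=1$, so the identity holds.

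For the inductive step, assume the claim for some $r \gqs j+1$, i.e.\ that the sum of $\binom{j}{j}$ through $\binom{r-1}{j}$ equals $\binom{r}{j+1}$. Adding the next term $\binom{r}{j}$ to both sides gives, on the left, the sum running up through $\binom{r}{j}$, and on the right $\binom{r}{j+1}+\binom{r}{j}$. By Pascal's rule this right side collapses to $\binom{r+1}{j+1}$, which is exactly the assertion at $r+1$. This closes the induction and establishes \eqref{bino}.

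**Alternatively**, one could give a one-line telescoping proof by writing each summand via Pascal's rule as $\binom{i}{j}=\binom{i+1}{j+1}-\binom{i}{j+1}$; the sum over $i$ from $j$ to $r-1$ then telescopes to $\binom{r}{j+1}-\binom{j}{j+1}=\binom{r}{j+1}$, using $\binom{j}{j+1}=0$. I would likely present the telescoping argument as the cleaner option, since it avoids a separate base case.

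**The only thing to watch** is the boundary bookkeeping: one must confirm the lowest index $i=j$ really gives $\binom{j}{j}=1$ as the first displayed term, and that $\binom{j}{j+1}=0$ so the telescoped lower endpoint vanishes. Neither is a genuine obstacle; this is a classical binomial identity and the argument is entirely routine once the recurrence is invoked.
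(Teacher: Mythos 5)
Your proposal is correct, and both your induction on $r$ and your telescoping variant rest on the same engine as the paper's proof, namely repeated application of Pascal's identity to collapse the sum (the paper first rewrites $\binom{i}{j}$ as $\binom{i}{i-j}$ and then combines terms from the left, which is just a different bookkeeping of the same idea). No gaps; the boundary checks you flag ($\binom{j}{j}=1$ and $\binom{j}{j+1}=0$) are exactly the right things to verify.
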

\begin{proof}
Write the left side of \eqref{bino} as
\begin{equation}\label{bino2}
    \binom{j+1}{0}+\binom{j+1}{1}+\binom{j+2}{2}+ \cdots +\binom{r-1}{r-1-j}.
\end{equation}
By successively combining the first two  terms of \eqref{bino2} using Pascal's identity, the sum collapses and
we obtain the lemma.
\end{proof}

It follows from Lemma \ref{mabel} that
\begin{align}
    \left. \frac{d^j}{dx^j}  \Bigl( 1+x+x^2+ \cdots +x^{r-1}\Bigr) \right|_{x=1} & = j!+\frac{(j+1)!}{1!}+ \cdots + \frac{(r-1)!}{(r-1-j)!} \notag\\
    & = j!\left( \binom{j}{j}+\binom{j+1}{j}+ \cdots +\binom{r-1}{j}\right) \notag\\
    & = j! \binom{r}{j+1}. \label{ee}
\end{align}

Now define
\begin{align}\label{hr}
    h_\rho(r,x) & :=\begin{cases} 1+x+x^2+\cdots +x^{r-1} & \text{ \ if \ } k \mid r\\
    1-\rho^r x^r & \text{ \ if \ } k \nmid r
    \end{cases}\\
    G_\rho(r,i) & := \frac{-1}{h_\rho(r,1) \cdot i!} \left. \frac{d^i}{dx^i} h_\rho(r,x) \right|_{x=1} \label{ghr}
\end{align}

\begin{lemma} \label{mabel2}
For $r$, $i \gqs 1 $ we have
\begin{equation}\label{gro}
G_\rho(r,i) =  \frac{-1}{r}\binom{r}{i+1}  \text{ \ \ if \ \ } k \mid r, \qquad
   G_\rho(r,i) = \frac{\rho^r}{1-\rho^r}\binom{r}{i}  \text{ \ \ if \ \ } k \nmid r.
\end{equation}
\end{lemma}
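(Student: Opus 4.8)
The plan is to treat the two branches $k \mid r$ and $k \nmid r$ separately, since $h_\rho(r,x)$ is itself defined by cases in \eqref{hr}. In each branch it suffices to read off the normalizing value $h_\rho(r,1)$ appearing in the definition \eqref{ghr} and to compute the $i$-th derivative $\left.\frac{d^i}{dx^i} h_\rho(r,x)\right|_{x=1}$; everything then reduces to cancelling an $i!$ and recognizing a binomial coefficient.

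First I would handle the case $k \mid r$, where $h_\rho(r,x) = 1+x+\cdots+x^{r-1}$. Evaluating at $x=1$ gives a sum of $r$ ones, so $h_\rho(r,1)=r$, while the numerator derivative is exactly the quantity already computed in \eqref{ee}, namely $i!\binom{r}{i+1}$. Substituting both into \eqref{ghr} yields
\[
    G_\rho(r,i) = \frac{-1}{r\cdot i!}\, i!\binom{r}{i+1} = \frac{-1}{r}\binom{r}{i+1},
\]
the $i!$ cancelling cleanly. Thus this branch rests entirely on the identity \eqref{ee} that is already in hand.

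Next I would treat $k \nmid r$, where $h_\rho(r,x) = 1-\rho^r x^r$ is just a constant plus a single monomial. Here $h_\rho(r,1) = 1-\rho^r$, and repeated differentiation gives $\frac{d^i}{dx^i}(1-\rho^r x^r) = -\rho^r \frac{r!}{(r-i)!}\,x^{r-i}$ for $1\lqs i \lqs r$ (and $0$ for $i>r$). Evaluating at $x=1$ and feeding the result into \eqref{ghr} produces
\[
    G_\rho(r,i) = \frac{-1}{(1-\rho^r)\, i!}\left(-\rho^r \frac{r!}{(r-i)!}\right) = \frac{\rho^r}{1-\rho^r}\binom{r}{i},
\]
with the range $i>r$ automatically consistent since then both the derivative and $\binom{r}{i}$ vanish.

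The computation is elementary throughout, so there is no real obstacle; the only points needing a little care are correctly identifying $h_\rho(r,1)$ in each branch (a sum of $r$ ones versus $1-\rho^r$) and checking that $\frac{r!}{i!(r-i)!}$ collapses to $\binom{r}{i}$.
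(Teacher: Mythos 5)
Your proof is correct and follows essentially the same route as the paper: the $k\mid r$ branch is read off from \eqref{ee} with $h_\rho(r,1)=r$, and the $k\nmid r$ branch is the direct differentiation of $1-\rho^r x^r$ giving $-\rho^r i!\binom{r}{i}$ at $x=1$. No gaps.
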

\begin{proof}
The left side of \eqref{gro} follows from \eqref{ee}. The right side is a simpler calculation, with
$$
\left. \frac{d^i}{dx^i}  \Bigl( 1-\rho^r x^r \Bigr) \right|_{x=1} = -\rho^r i! \binom{r}{i} \qquad (i \gqs 1).
$$
\end{proof}

\begin{theorem} \label{big}
With $\rho =e^{2\pi i h/k}$, define $G_\rho(r,i)$ as in \eqref{ghr} or \eqref{gro} and set $s:=\lfloor N/k\rfloor$. Then
\begin{equation}\label{3}
C_{hk\ell}(N)=
\frac{(-1)^{s}\rho^\ell}{ k^{2s} s!}\left( \prod_{d=1}^{N-k s} \frac{1}{1-\rho^d}\right)
\sum_{m=1}^{s-\ell}
\sum_{ \substack{i_1, i_2, \dots ,i_m \geqslant 1 \\
i_1+i_2+ \cdots +i_m=s-\ell \\
1\leqslant r_1 \leqslant r_2 \leqslant \cdots \leqslant r_m \leqslant N }}
 G_\rho(r_1,i_1) G_\rho(r_2,i_2) \cdots G_\rho(r_m,i_m).
\end{equation}
\end{theorem}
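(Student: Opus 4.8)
The plan is to start from Andrews' identity \eqref{and} and push all the analysis to the point $y=1$ via the linear substitution $x=\rho y$. Under this change the numerator becomes $(x-\rho)^N=\rho^N(y-1)^N$, each denominator factor becomes $1-\rho^r y^r$, and $\tfrac{d}{dx}=\rho^{-1}\tfrac{d}{dy}$, while evaluation at $x=\rho$ corresponds to $y=1$. Collecting the powers of $\rho$ (namely $\rho^{-(N-\ell)}$ from the derivative and $\rho^N$ from the numerator) produces exactly the factor $\rho^\ell$ appearing in \eqref{3}. First I would observe that the factors $1-\rho^r y^r$ that vanish at $y=1$ are precisely those with $k\mid r$, of which there are $s$; for these, $\rho^r=1$ gives $1-y^r=-(y-1)h_\rho(r,y)$ with $h_\rho(r,y)$ the polynomial from \eqref{hr}, whereas for $k\nmid r$ the factor already equals $h_\rho(r,y)$. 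Hence the full denominator equals $(-1)^s(y-1)^s\prod_{r=1}^N h_\rho(r,y)$, and the function to be differentiated becomes $(-1)^s\rho^N(y-1)^{N-s}H(y)$, where $H(y):=\prod_{r=1}^N 1/h_\rho(r,y)$ is holomorphic and nonvanishing near $y=1$ since each $h_\rho(r,1)$ (equal to $r$ or $1-\rho^r$) is nonzero.

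The key simplification is an application of Leibniz's rule to $\frac{d^{N-\ell}}{dy^{N-\ell}}\big[(y-1)^{N-s}H(y)\big]$ at $y=1$. Because $\frac{d^{j}}{dy^{j}}(y-1)^{N-s}$ vanishes at $y=1$ unless $j=N-s$, a single term survives, contributing $\binom{N-\ell}{N-s}(N-s)!\,H^{(s-\ell)}(1)=\frac{(N-\ell)!}{(s-\ell)!}H^{(s-\ell)}(1)$. (When $\ell>s$ the index $j=N-s$ lies outside the Leibniz range, which recovers $C_{hk\ell}(N)=0$ for $\ell>\lfloor N/k\rfloor$.) After cancelling $(N-\ell)!$ the whole problem reduces to the Taylor coefficient $H^{(s-\ell)}(1)/(s-\ell)!=[t^{s-\ell}]H(1+t)$, giving $C_{hk\ell}(N)=(-1)^s\rho^\ell\,[t^{s-\ell}]\prod_{r=1}^N 1/h_\rho(r,1+t)$.

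Next I would separate the constant term from each factor. From the definition \eqref{ghr} the Taylor expansion reads $h_\rho(r,1+t)=h_\rho(r,1)\big(1-\sum_{i\gqs 1}G_\rho(r,i)t^i\big)$, so that $1/h_\rho(r,1+t)=h_\rho(r,1)^{-1}\big(1-\sum_{i\gqs 1}G_\rho(r,i)t^i\big)^{-1}$. The product of constants $\prod_{r=1}^N h_\rho(r,1)^{-1}$ is precisely the prefactor $\frac{1}{k^{2s}s!}\prod_{d=1}^{N-ks}(1-\rho^d)^{-1}$: the $s$ indices divisible by $k$ contribute $\prod_{j=1}^s(jk)=k^s s!$, while grouping the remaining indices into $s$ full residue blocks plus the tail $sk+1,\dots,N$ and using $\rho^{r}=\rho^{r\bmod k}$ together with Lemma \ref{rou} contributes $k^s\prod_{d=1}^{N-ks}(1-\rho^d)$. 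The remaining step is the combinatorial expansion of $\prod_{r=1}^N\big(1-\sum_{i\gqs 1}G_\rho(r,i)t^i\big)^{-1}$: expanding each geometric series and organizing a weakly increasing multi-index $1\lqs r_1\lqs\cdots\lqs r_m\lqs N$ by the multiplicity $c_r$ with which each value $r$ occurs, the unconstrained sum over the exponents attached to a fixed $r$ rebuilds $\big(\sum_{i\gqs 1}G_\rho(r,i)t^i\big)^{c_r}$, so the multiset sum factors back into the product. Reading off the coefficient of $t^{s-\ell}$, which for $s>\ell$ forces $m\gqs 1$, yields exactly the double sum in \eqref{3}.

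The main obstacle I anticipate is bookkeeping rather than anything conceptual: correctly matching the ordered-with-repeats structure of the sum in \eqref{3} to the expansion of the product of geometric series, in particular verifying that allowing $r_j=r_{j+1}$ is precisely compensated by the higher powers in each geometric factor, while simultaneously tracking the sign $(-1)^s$ and the powers of $\rho$ through the substitution. The prefactor identity via Lemma \ref{rou} and the block decomposition of $\{1,\dots,N\}$ modulo $k$ is the other place requiring care, though it is routine once the periodicity $\rho^r=\rho^{r\bmod k}$ is exploited.
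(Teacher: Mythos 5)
Your proposal is correct and follows essentially the same route as the paper: Andrews' identity \eqref{and}, the substitution $x\mapsto\rho x$, extraction of the factor $(x-1)^{N-s}$ so that a single Leibniz term survives, reduction to the $(s-\ell)$-th Taylor coefficient of $\prod_r 1/h_\rho(r,\cdot)$ at $1$, and the prefactor computation via Lemma \ref{rou}. The only difference is cosmetic: where the paper derives the expansion \eqref{lotr} by Fa\`a di Bruno's formula applied to $g(x)=1/x$, you obtain the identical expansion by writing $h_\rho(r,1+t)=h_\rho(r,1)\bigl(1-\sum_{i\gqs 1}G_\rho(r,i)t^i\bigr)$ and expanding the geometric series, which is an equivalent (and slightly lighter) piece of bookkeeping.
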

\begin{proof}
Changing the variable from $x$ to $\rho x$ in \eqref{and} produces
\begin{align*}
   (N-\ell)! C_{hk\ell}(N) & = \rho^\ell \frac{d^{N-\ell}}{dx^{N-\ell}} (x-1)^{N} \prod_{1 \lqs d \lqs N}  \left. \frac{1}{1-\rho^d x^d}\right|_{x=1}\\
   & = \rho^\ell (-1)^s \frac{d^{N-\ell}}{dx^{N-\ell}}  (x-1)^{N-s} \prod_{\substack{1 \lqs d \lqs N \\ k \mid d}}  \frac{1}{1+x+\cdots +x^{d-1}} \prod_{\substack{1 \lqs d \lqs N \\ k \nmid d}} \left. \frac{1}{1-\rho^d x^d}\right|_{x=1}\\
   & = \rho^\ell (-1)^s \sum_{a+b=N-\ell} \binom{N-\ell}{a}\frac{d^{a}}{dx^{a}} \left. (x-1)^{N-s} \right|_{x=1} \cdot \frac{d^{b}}{dx^{b}} \prod _{1 \lqs d \lqs N}  \left. \frac{1}{h_\rho(d,x)}\right|_{x=1}.
\end{align*}
The only non-zero terms have $a=N-s$ and hence $b=s-\ell$. Therefore
\begin{align}
    C_{hk\ell}(N) & = \frac{\rho^\ell (-1)^s}{(s-\ell)!} \frac{d^{s-\ell}}{dx^{s-\ell}} \prod _{1 \lqs d \lqs N}  \left. \frac{1}{h_\rho(d,x)}\right|_{x=1} \notag\\
    & = \frac{\rho^\ell (-1)^s}{(s-\ell)!}
    \sum_{m_1+ \cdots + m_N = s-\ell} \binom{s-\ell}{m_1, \dots ,m_N} \prod_{r=1}^N
    \frac{d^{m_r}}{dx^{m_r}}   \left. \frac{1}{h_\rho(r,x)}\right|_{x=1} \label{toller}
\end{align}
using Leibnitz' formula. Fa\`a di Bruno's formula (the symmetric form, due to Ces\`aro and Riordan - see \cite[Eq. (2.2)]{Jo}) tells us that
$$
\frac{d^m}{dx^m}g(f(x))=\sum_{d=0}^m g^{(d)}(f(x)) \frac{1}{d!} \sum_{ \substack{i_1,i_2, \dots, i_d \geqslant 1 \\
 i_1+i_2+ \cdots +i_d=m}} \binom{m}{i_1,i_2, \dots ,i_d} f^{(i_1)}(x) \cdots f^{(i_d)}(x)
$$
where the inner sum is defined as $\delta_{m,0}$ when  $d=0$.
Apply this with $g(x)=1/x$ and $f(x)=h_\rho(r,x)$ to get
$$
\frac{d^m}{dx^m} \frac{1}{h_\rho(r,x)}=\sum_{d=0}^m  \frac{(-1)^d}{h_\rho(r,x)^{d+1}} \sum_{ \substack{i_1,i_2, \dots, i_d \geqslant 1 \\
 i_1+i_2+ \cdots +i_d=m }} \binom{m}{i_1,i_2, \dots ,i_d} h^{(i_1)}_\rho(r,x) \cdots h^{(i_d)}_\rho(r,x).
$$
Therefore
\begin{equation}\label{lotr}
\frac{d^m}{dx^m} \left. \frac{1}{h_\rho(r,x)}\right|_{x=1} =\frac{m!}{h_\rho(r,1)} \sum_{d=0}^m  \sum_{ \substack{i_1,i_2, \dots, i_d \geqslant 1 \\
 i_1+i_2+ \cdots +i_d=m}}  G_\rho(r,i_1) \cdots  G_\rho(r,i_d)
\end{equation}
 and inserting \eqref{lotr} into \eqref{toller},
\begin{align*}
    C_{hk\ell}(N) & = \rho^\ell (-1)^s \left( \prod_{w=1}^N \frac{1}{h_\rho(w,1)}\right) \sum_{m_1+ \cdots + m_N = s-\ell} \prod_{r=1}^N \left(
    \sum_{d=0}^{m_r}  \sum_{ \substack{i_1,i_2, \dots, i_d \geqslant 1 \\
 i_1+i_2+ \cdots +i_d=m_r}}  G_\rho(r,i_1) \cdots  G_\rho(r,i_d)\right)\\
 & = \rho^\ell (-1)^s \left( \prod_{w=1}^N \frac{1}{h_\rho(w,1)}\right) \sum_{m=1}^{s-\ell} \sum_{ \substack{i_1, i_2, \dots ,i_m \geqslant 1 \\
i_1+i_2+ \cdots +i_m= s-\ell \\
1\leqslant r_1 \leqslant r_2 \leqslant \cdots \leqslant r_m \leqslant N }}
 G_\rho(r_1,i_1) G_\rho(r_2,i_2) \cdots G_\rho(r_m,i_m).
\end{align*}
Finally,
\begin{align*}
\prod_{w=1}^N h_\rho(w,1) & = \prod_{\substack{ 1 \lqs w \lqs N \\ k \mid w}} h_\rho(w,1) \cdot \prod_{\substack{ 1 \lqs w \lqs k s \\ k \nmid w}} h_\rho(w,1) \cdot \prod_{\substack{ k s +1 \lqs w \lqs N \\ k \nmid w}} h_\rho(w,1)\\
& =\prod_{\substack{ 1 \lqs w \lqs N \\ k \mid w}} w \cdot \prod_{\substack{ 1 \lqs w \lqs k s \\ k \nmid w}} (1-\rho^w) \cdot \prod_{\substack{ k s +1 \lqs w \lqs N \\ k \nmid w}} (1-\rho^w)\\
& =k^s s! \biggl(\prod_{ 1 \lqs w \lqs k-1} (1-\rho^w) \biggr)^s \prod_{ 1 \lqs w \lqs N-k s} (1-\rho^w).
\end{align*}
With Lemma \ref{rou}, this completes the proof. \end{proof}

\begin{cor} \label{zco}
Recursively define
\begin{align*}
    Q_\rho(0,a) & := 0, \qquad (a \gqs 0), \\
    Q_\rho(N,0) & := 1, \qquad (N \gqs 1), \label{sz1}
\end{align*}
and, with $G_\rho(N,b)$ as in \eqref{gro},
\begin{equation} \label{sz2}
    Q_\rho(N,a) := Q_\rho(N-1,a)+\sum_{b=1}^a Q_\rho(N,a-b) G_\rho(N,b), \qquad (N \gqs 1, \ a \gqs 1).
\end{equation}
Then, for $s:=\lfloor N/k\rfloor$,
\begin{equation} \label{sz3}
    C_{hk\ell}(N)=
\frac{(-1)^{s}\rho^\ell}{ k^{2 s} s!}\left(\prod_{d=1}^{N-k s} \frac{1}{1-\rho^d}\right) Q_\rho(N,s -\ell).
\end{equation}
\end{cor}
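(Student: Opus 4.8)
The plan is to notice that the prefactors in \eqref{sz3} already coincide with those in \eqref{3}, so the whole content of the corollary is the assertion that the double sum appearing in Theorem \ref{big} equals $Q_\rho(N,s-\ell)$. Writing $a:=s-\ell$, I would set
\[
S(N,a):=\sum_{m=0}^{a}\ \sum_{\substack{i_1,\dots,i_m\gqs 1\\ i_1+\cdots+i_m=a\\ 1\lqs r_1\lqs\cdots\lqs r_m\lqs N}} G_\rho(r_1,i_1)\cdots G_\rho(r_m,i_m),
\]
where the $m=0$ term is the empty product, equal to $1$, contributing exactly when $a=0$. This is the sum in \eqref{3} read with the convention (forced by the $d=0$ terms in the Fa\`a di Bruno expansion used to prove Theorem \ref{big}) that the empty configuration is included; thus it suffices to prove $S(N,a)=Q_\rho(N,a)$ for all $N\gqs 1$ and $a\gqs 0$.

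The key step is a combinatorial decomposition of the configurations counted by $S(N,a)$ according to the largest index $r_m$. Since the indices are non-decreasing, either every $r_j\lqs N-1$ (which includes the empty configuration), reproducing precisely the configurations of $S(N-1,a)$, or else $r_m=N$. In the latter case I would peel off the final pair $(r_m,i_m)=(N,b)$: the remainder is a configuration $r_1\lqs\cdots\lqs r_{m-1}\lqs N$ with labels summing to $a-b$, that is, an arbitrary configuration counted by $S(N,a-b)$, weighted by $G_\rho(N,b)$. Summing over $b=i_m$ from $1$ to $a$ gives
\[
S(N,a)=S(N-1,a)+\sum_{b=1}^{a} G_\rho(N,b)\,S(N,a-b)\qquad(N\gqs 1,\ a\gqs 1),
\]
together with the immediate values $S(N,0)=1$ (only the empty configuration) and $S(0,a)=\delta_{0,a}$.

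Comparing with the defining relations of $Q_\rho$, the recursion above is identical to \eqref{sz2}, and $S(N,0)=1=Q_\rho(N,0)$ for $N\gqs 1$. I would then conclude $S=Q_\rho$ by induction on $N$, with an inner induction on $a$: for $N\gqs 2$ both quantities agree on row $N-1$ and on row $N$ at smaller $a$, so the common recursion propagates equality; for $N=1$ the recursion refers to row $N=0$ only at $a\gqs 1$, where $Q_\rho(0,a)=0=S(0,a)$, so equality again follows. Substituting $S(N,s-\ell)=Q_\rho(N,s-\ell)$ into the formula of Theorem \ref{big} then yields \eqref{sz3}. The one delicate point I expect to need care is the bookkeeping at the boundary $N=0$: the combinatorial empty-product value $S(0,0)=1$ differs from the declared $Q_\rho(0,0)=0$, and the argument hinges on checking that this harmless mismatch in the base row is never invoked when computing any value with $N\gqs 1$ (since $Q_\rho(1,0)$ is fixed by its own initial condition and the recursion only ever reads row $0$ at $a\gqs 1$).
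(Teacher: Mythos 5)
Your proposal is correct and follows essentially the same route as the paper: identify $Q_\rho(N,a)$ with the double sum from Theorem \ref{big} and derive the recursion by splitting according to whether the largest index $r_m$ is at most $N-1$ or equal to $N$. Your extra care with the empty-configuration convention at $a=0$ and the harmless mismatch at $(N,a)=(0,0)$ is a point the paper glosses over, but it changes nothing of substance.
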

\begin{proof}
The above definition of $Q_\rho(N,a)$ corresponds to
\begin{equation}\label{label}
    Q_\rho(N,a) =\sum_{m=1}^{a} \sum_{ \substack{i_1, i_2, \dots ,i_m \geqslant 1 \\
i_1+i_2+ \cdots +i_m=a \\
1\leqslant r_1 \leqslant r_2 \leqslant \cdots \leqslant r_m \leqslant N }}
 G_\rho(r_1,i_1) G_\rho(r_2,i_2) \cdots G_\rho(r_m,i_m).
\end{equation}
Since $r_m$ in the  summation \eqref{label} is either  $\lqs N-1$ or equal to $N$, we obtain \eqref{sz2}.
Then Theorem \ref{big} implies \eqref{sz3}.
\end{proof}

Corollary \ref{zco} describes, essentially, the recursion used by Sills and Zeilberger for their computations. See \cite[Sect. 2.2]{SZ} for their derivation in the $h/k=0/1$ case.

\bibliography{raddata}

\begin{thebibliography}{10}

\bibitem{An}
George~E. Andrews.
\newblock Partitions: at the interface of {$q$}-series and modular forms.
\newblock {\em Ramanujan J.}, 7(1-3):385--400, 2003.
\newblock Rankin memorial issues.

\bibitem{Ap}
T.~M. Apostol.
\newblock On the {L}erch zeta function.
\newblock {\em Pacific J. Math.}, 1:161--167, 1951.

\bibitem{Be1}
Matthias Beck, Ira~M. Gessel, and Takao Komatsu.
\newblock The polynomial part of a restricted partition function related to the
  {F}robenius problem.
\newblock {\em Electron. J. Combin.}, 8(1):Note 7, 5 pp. (electronic), 2001.

\bibitem{Boy}
Khristo~N. Boyadzhiev.
\newblock Apostol-{B}ernoulli functions, derivative polynomials and {E}ulerian
  polynomials.
\newblock {\em Adv. Appl. Discrete Math.}, 1(2):109--122, 2008.

\bibitem{Ca}
L.~Carlitz.
\newblock Note on {N}\"orlund's polynomial {$B_{n}^{(z)}$}.
\newblock {\em Proc. Amer. Math. Soc.}, 11:452--455, 1960.

\bibitem{DG}
Morley Davidson and Stephen~M. Gagola, Jr.
\newblock On {R}ademacher's conjecture and a recurrence relation of {E}uler.
\newblock {\em Quaest. Math.}, 25(3):317--325, 2002.

\bibitem{Di}
Leonard~Eugene Dickson.
\newblock {\em History of the theory of numbers. {V}ol. {II}: {D}iophantine
  analysis}.
\newblock Chelsea Publishing Co., New York, 1966.

\bibitem{Gl}
J.~W.~L. Glaisher.
\newblock Formulae for partitions into given elements, derived from
  {S}ylvester's theorem.
\newblock {\em Quart. J. Pure and Appl. Math.}, 40:275--348, 1909.

\bibitem{GJ}
I.~P. Goulden and D.~M. Jackson.
\newblock {\em Combinatorial enumeration}.
\newblock A Wiley-Interscience Publication. John Wiley \& Sons Inc., New York,
  1983.
\newblock With a foreword by Gian-Carlo Rota, Wiley-Interscience Series in
  Discrete Mathematics.

\bibitem{Knu}
Ronald~L. Graham, Donald~E. Knuth, and Oren Patashnik.
\newblock {\em Concrete mathematics}.
\newblock Addison-Wesley Publishing Company, Reading, MA, second edition, 1994.
\newblock A foundation for computer science.

\bibitem{Ho}
F.~T. Howard.
\newblock Congruences and recurrences for {B}ernoulli numbers of higher order.
\newblock {\em Fibonacci Quart.}, 32(4):316--328, 1994.

\bibitem{IwKo}
Henryk Iwaniec and Emmanuel Kowalski.
\newblock {\em Analytic number theory}, volume~53 of {\em American Mathematical
  Society Colloquium Publications}.
\newblock American Mathematical Society, Providence, RI, 2004.

\bibitem{Ja}
Carl Gustav~Jacob Jacobi.
\newblock De resolutione aequationum per series infinitas.
\newblock {\em J. Reine Angew. Math.}, 6:257--286, 1830.

\bibitem{Jo}
Warren~P. Johnson.
\newblock The curious history of {F}a\`a di {B}runo's formula.
\newblock {\em Amer. Math. Monthly}, 109(3):217--234, 2002.

\bibitem{K2}
Donald~E. Knuth.
\newblock Two notes on notation.
\newblock {\em Amer. Math. Monthly}, 99(5):403--422, 1992.

\bibitem{max}
Leonard~C. Maximon.
\newblock The dilogarithm function for complex argument.
\newblock {\em R. Soc. Lond. Proc. Ser. A Math. Phys. Eng. Sci.},
  459(2039):2807--2819, 2003.

\bibitem{Mu2}
Augustine~O. Munagi.
\newblock Computation of {$q$}-partial fractions.
\newblock {\em Integers}, 7:A25, 21, 2007.

\bibitem{Mu}
Augustine~O. Munagi.
\newblock The {R}ademacher conjecture and {$q$}-partial fractions.
\newblock {\em Ramanujan J.}, 15(3):339--347, 2008.

\bibitem{DLMF}
Frank W.~J. Olver, Daniel~W. Lozier, Ronald~F. Boisvert, and Charles~W. Clark,
  editors.
\newblock {\em Digital Library of Mathematical Functions}.
\newblock National Institute of Standards and Technology from
  http://dlmf.nist.gov/, Washington, DC, 2012-03-23.

\bibitem{OS}
Cormac O'Sullivan.
\newblock Asymptotics for the partial fraction decomposition of the restricted
  partition generating function.

\bibitem{Ra2}
Hans Rademacher.
\newblock A convergent series for the partition function p(n).
\newblock {\em Proc. Natl. Acad. Sci. USA}, 23(2):78--84, 1937.

\bibitem{Ra}
Hans Rademacher.
\newblock {\em Topics in analytic number theory}.
\newblock Springer-Verlag, New York, 1973.
\newblock Edited by E. Grosswald, J. Lehner and M. Newman, Die Grundlehren der
  mathematischen Wissenschaften, Band 169.

\bibitem{SZ}
Andrew~V. Sills and Doron Zeilberger.
\newblock Rademacher's infinite partial fraction conjecture is (almost
  certainly) false.
\newblock 2011.
\newblock Preprint.

\bibitem{Sy1}
J.~J. Sylvester.
\newblock On a discovery in the partition of numbers.
\newblock {\em Quart. J. Pure and Appl. Math.}, 1:81--85, 1857.

\bibitem{Sy2}
J.~J. Sylvester.
\newblock On the partition of numbers.
\newblock {\em Quart. J. Pure and Appl. Math.}, 1:141--152, 1857.

\bibitem{Sy3}
J.~J. Sylvester.
\newblock On subvariants, i.e. semi-invariants to binary quantics of an
  unlimited order: Excursus on rational fractions and partitions.
\newblock {\em Amer. J. Math.}, 5(1):119--136, 1882.

\end{thebibliography}

\end{document}